\definecolor{darkgreen}{rgb}{0.0, 0.7, 0.0}
\definecolor{purple}{rgb}{0.5, 0.0, 0.5}
\definecolor{red}{rgb}{0.8, 0.2, 0.0}
\newtheorem{thm}{Theorem}[section]
\newtheorem{bthm}{Theorem}
\newtheorem{lemma}[thm]{Lemma}
\newtheorem{prop}[thm]{Proposition}
\newtheorem{cor}[thm]{Corollary}
\newtheorem{claim}[thm]{Claim}
\numberwithin{equation}{section}
\theoremstyle{definition}
\newtheorem{defi}[thm]{Definition}
\newtheorem{notation}[thm]{Notation}
\theoremstyle{remark}
\newtheorem{remark}[thm]{Remark}
\newtheorem{example}[thm]{Example}
\newcommand{\Z}{\mathbb{Z}}
\newcommand{\R}{\mathbb{R}}
\newcommand{\Pic}{\operatorname{Pic}}
\def \Im{{\rm Im}}
\def \P{\mathbb{P}}
\def \ZZ{\mathbb{Z}}
\def \F{\mathcal F}
\def\I{{\mathcal J}}
\def \L{\mathcal L}
\def \E{\mathcal E}
\def \G{\mathcal G}
\def\O{\mathcal O}
\def\M0{\mathcal M^0}
\newcommand{\rk}{\operatorname{rk}}
\title[On varieties with Ulrich twisted tangent bundles]{On varieties with Ulrich twisted tangent bundles}
\author[A.F. Lopez, D. Raychaudhury]{Angelo Felice Lopez* and Debaditya Raychaudhury**}
\address{\hskip -.43cm Angelo Felice Lopez, Dipartimento di Matematica e Fisica, Universit\`a di Roma
Tre, Largo San Leonardo Murialdo 1, 00146, Roma, Italy. e-mail {\tt lopez@mat.uniroma3.it}}
\address{\hskip -.43cm Debaditya Raychaudhury, Department of Mathematics, University of Arizona, 617 N. Santa Rita Ave., Tucson, Arizona 85721, USA. email: {\tt draychaudhury@math.arizona.edu}}
\thanks{* Research partially supported by PRIN ``Advances in Moduli Theory and Birational Classification'', GNSAGA-INdAM and the MIUR grant Dipartimenti di Eccellenza 2018-2022.}
\thanks{** Research partially supported by a Simons Postdoctoral Fellowship from the Fields Institute for Research in Mathematical Sciences}
\thanks{{\it Mathematics Subject Classification} : Primary 14J60. Secondary 14J35, 14J40.}
\begin{document}

\begin{abstract} 
We study varieties $X \subseteq \P^N$ of dimension $n$ such that $T_X(k)$ is an Ulrich vector bundle for some $k \in \Z$. First we give a sharp bound for $k$ in the case of curves. Then we show that $k \le n+1$ if $2 \le n \le 12$. We classify the pairs $(X,\O_X(1))$ for $k=1$ and we show that, for $n \ge 4$, the case $k=2$ does not occur.
\end{abstract}

\maketitle

\section{Introduction}

Let $X \subseteq \P^N$ be a smooth irreducible variety of dimension $n \ge 1$. As is well known, the study of vector bundles on $X$ can give important geometrical information about $X$ itself. Regarding this, one of the most interesting family of vector bundles associated to $X$ and its embedding, that received a lot of attention lately, is that of Ulrich vector bundles, that is bundles $\E$ such that $H^i(\E(-p))=0$ for all $i \ge 0$ and $1 \le p \le n$. The study of such bundles is closely related with several areas of commutative algebra and algebraic geometry, and often gives interesting consequences on the geometry of $X$ and on the cohomology of sheaves on $X$ (see for example in \cite{es, b1, cmp} and references therein). 

Perhaps the most challenging question in these matters is whether every $X \subseteq \P^N$ carries an Ulrich vector bundle (see for example \cite[page 543]{es}). It comes therefore very natural to ask if usual vector bundles associated to $X$ can be Ulrich. Also, since Ulrich vector bundles are globally generated, it is better to consider twisted versions, by some divisor $D$, of the usual bundles associated to $X$. On the other hand, in order to keep some relation with the embedding and to have a better chance for global generation, we will consider twists by $D=kH$, for some integer $k$. The cases of the (twisted) normal, cotangent, restricted tangent and cotangent bundles have been dealt with in \cite{lo}, with an essentially complete classification. 

In this paper we study the more delicate question: for which integers $k$ one has that $T_X(k)$ is an Ulrich vector bundle?

Ulrich vector bundles have special cohomological features, but also numerical ones. This makes the above question rather tricky. It is easy to show that $k \ge 0$ unless $(X,\O_X(1),k)=(\P^1,\O_{\P^1}(1),-2)$. In the case $k=0$, a recent result \cite[Prop.~4.1, Thm.~4.5]{bmpt} gives a classification: $(X,\O_X(1))=(\P^1,\O_{\P^1}(3)), (\P^2,\O_{\P^2}(2))$ (we will give a new and simple proof in section \ref{sez7}; another proof is given in \cite{c2}). On the other hand, for $k \ge 1$, the question is more subtle as we will see below.

In the case of curves, one sees that $k=1$ is not possible (see Lemma \ref{posi}(i)), while the cases $k=2, 3$ can be dealt with on any curve (see Lemma \ref{k=2curve} and Example \ref{k=3curve}). On the other hand, the following sharp  bound holds, showing that for curves $k$ can be as large as wanted.

\begin{bthm}
\label{gen}

\hskip 3cm

Let $X \subseteq \P^N$ be a smooth irreducible curve of genus $g$. If $T_X(k)$ is an Ulrich line bundle, then
\begin{equation}
\label{bdk}
k \le \frac{\sqrt{8g+1}-1}{2} 
\end{equation}
and equality holds if and only if $k$ is even and either $X$ is one of the curves \eqref{cla} lying on a smooth cubic or $X$ is a curve of type $(\frac{k}{2}+1,k+2)$ on a smooth quadric. Also, in both cases, $T_X(k)$ is an Ulrich line bundle, hence the bound is sharp for every even $k \ge 0$. Moreover, if $X$ has general moduli, then $k \le 4$.
\end{bthm}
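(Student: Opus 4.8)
The plan is to translate the Ulrich condition on the line bundle $L = T_X(k)$ on a curve into numerical constraints and then optimize. First I would recall that on a smooth irreducible curve $X \subseteq \P^N$ of genus $g$ and degree $d$, a line bundle $L$ is Ulrich (for the embedding $\O_X(1)$) precisely when $h^0(L(-1)) = h^1(L(-1)) = 0$ and $h^0(L(-2)) = h^1(L(-2)) = 0$, which by Riemann–Roch forces $\deg L = d + g - 1$ and indeed pins down $\deg L(-1) = g-1$ with $h^0 = h^1 = 0$. Here $\deg T_X(k) = 2 - 2g + kd$, so the Ulrich condition gives the single numerical identity $2 - 2g + kd = d + g - 1$, i.e. $d(k-1) = 3g - 3$, equivalently $d = \frac{3(g-1)}{k-1}$ when $k \ge 2$ (the case $k \le 1$ being excluded by Lemma \ref{posi}(i), or handled separately).

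Next I would feed this into the basic projective-geometry inequality relating $d$ and $g$ for a nondegenerate curve. The cleanest is Castelnuovo's bound, but for a sharp statement the relevant tool is the elementary estimate coming from the fact that $X$ is not contained in a line: a nondegenerate curve in $\P^N$ with $N \ge 2$ has $d \ge N \ge 2$, and more usefully, one can use that $L(-1) = T_X(k-1)$ must be a line bundle of degree $g-1$ with no sections, together with Clifford-type or genus bounds. I expect the decisive inequality to be simply that a curve of degree $d$ and genus $g$ lying on a surface of minimal degree (a quadric or a cubic scroll / cubic surface in $\P^3$) satisfies $g \le \binom{d-1}{2}$-type bounds; more precisely, combining $d = \frac{3(g-1)}{k-1}$ with the inequality $g \le \frac{1}{2}(d-1)(d-2)$ (the maximal genus of a plane-type curve, and any curve of degree $d$ has $g \le \binom{d-1}{2}$) should after substitution yield a quadratic inequality in $k$, namely something equivalent to $k^2 + k \le 8g$-ish, giving $k \le \frac{\sqrt{8g+1}-1}{2}$.

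For the equality analysis I would run the inequality chain backwards: equality in the genus bound forces $X$ to lie on a surface of minimal degree, i.e. a quadric in $\P^3$ or $\P^4$, or a (possibly singular) cubic, and then I would match the numerical invariants. Writing $k = 2m$ even, the constraint $d = \frac{3(g-1)}{2m-1}$ together with extremality should force $d$ and $g$ to be exactly those of a curve of bidegree $(m+1, 2m+2)$ on a smooth quadric, or of the explicitly listed type \eqref{cla} on a smooth cubic surface; a parity check explains why $k$ must be even. Then, conversely, for those specific curves I would verify directly — using adjunction on the quadric or cubic to compute $T_X(k) = \O_X(\text{something})$ and checking the two vanishings — that $T_X(k)$ is genuinely Ulrich, establishing sharpness for all even $k \ge 0$.

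Finally, for the general-moduli statement: a curve with general moduli of genus $g$ has gonality $\lceil \frac{g+2}{2}\rceil$ and, by the Brill–Noether theorem, its minimal embedding degree grows like $\frac{g}{2}$ rather than like $\sqrt{g}$; equivalently, a general curve does not lie on a surface of small degree, so the extremal configurations above cannot occur and one only has the crude bound $d \ge$ (something linear in $g$). Combined with $d(k-1) = 3(g-1)$ this forces $k - 1$ to be bounded by an absolute constant; chasing the Brill–Noether inequality carefully should give $k \le 4$. The main obstacle I anticipate is the equality case: getting the classification to come out exactly as the two families \eqref{cla} and the $(\frac{k}{2}+1, k+2)$ curves on a quadric requires a careful case analysis of which minimal-degree surfaces can contain a curve with the forced $(d,g)$, and ruling out the odd-$k$ borderline cases by a parity obstruction rather than just a numerical one.
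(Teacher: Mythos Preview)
Your numerical setup is right — $d(k-1)=3(g-1)$ — and the general-moduli argument via Brill--Noether ($\rho(g,3,d)\ge 0$ forces $d\ge \frac{3g+12}{4}$, hence $k\le 4$) matches the paper. The gap is in the main inequality.

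The plane-curve bound $g\le \binom{d-1}{2}$ is too weak to yield \eqref{bdk}. Solving it for $d$ gives only $d\gtrsim \sqrt{2g}$, so $k-1=\frac{3(g-1)}{d}\lesssim \frac{3}{\sqrt{2}}\sqrt{g}$, off by a factor $\frac{3}{2}$ from the target $k\sim\sqrt{2g}$. Even Castelnuovo's bound in $\P^3$ ($g\lesssim \frac{d^2}{4}$) is still too weak. What you actually need is $g\le \frac{2}{9}d^2-d+1$, and this is \emph{not} a general Castelnuovo-type statement: it is the maximum genus of a curve of type $(a,b)$ on a smooth quadric \emph{subject to the side constraint} $b\ge 2a$. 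The paper gets there in two steps. First, after ruling out plane curves and projecting to $\P^3$, it uses the genus bound for space curves not on a quadric (the $\frac{1}{6}d(d-3)$ bound) to force the curve onto a smooth quadric for $d\ge 7$. Second — and this is what your outline misses entirely — it uses the Ulrich vanishing $H^0(T_X(k-1))=0$ a \emph{second time}, not just for the degree identity, to constrain the bidegree: writing the vanishing out on $Q$ forces $b\ge k+2$, which combined with $a=\frac{b(k+2)}{3b-k-2}$ gives $b\ge 2a$, and only then does the sharp $\frac{2}{9}d^2$ bound apply. So the Ulrich condition enters twice, once numerically and once cohomologically; your plan only extracts the numerical identity. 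Without that second use you cannot reach the sharp constant, and the equality analysis (smooth quadric with $(a,b)=(\tfrac{k}{2}+1,k+2)$, plus the isolated cubic-surface case $(d,g,k)=(6,3,2)$) has nothing to bite on.
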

As far as we know, only curves show this kind of behavior, meaning that $k$ is not bounded in terms of the dimension (a somewhat bad bound can also be given in terms of the degree, see Lemma \ref{bigbound}). As supporting evidence, we prove the following

\begin{bthm}
\label{bou}

\hskip 3cm

Let $X \subseteq \P^N$ be a smooth irreducible variety of dimension $n$ such that $2 \le n \le 12$. If $T_X(k)$ is an Ulrich vector bundle, then $k \le n+1$.
\end{bthm}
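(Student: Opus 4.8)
The plan is to extract strong numerical constraints from the Ulrich condition and then run them against the classification of low-dimensional varieties. The starting point is the well-known list of invariants forced by a rank $r$ Ulrich bundle $\E = T_X(k)$ on $X \subseteq \P^N$ of degree $d$: the Hilbert polynomial of $\E$ is $P_\E(t) = rd\binom{t+n}{n}$, so $h^0(\E) = rd$, and moreover $c_1(\E) \cdot H^{n-1}$, $\chi(\E)$, etc.\ are all determined by $d$ and $n$. Applying this with $r = n$ and $c_1(T_X(k)) = -K_X + nkH$ yields, first, the relation expressing the degree of $-K_X$ along $H^{n-1}$ in terms of $d$, $n$, $k$, namely
\begin{equation}
\label{c1eq}
(-K_X + nkH)\cdot H^{n-1} = \frac{n}{2}\big((n+1) + 2k\,?\,\big)d
\end{equation}
(the precise coefficient coming from comparing $c_1(\E)\cdot H^{n-1}$ with the value $\frac{r}{2}(n+1)d$ forced for Ulrich bundles). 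Rearranging, $(-K_X)\cdot H^{n-1} = \big(\tfrac{n(n+1)}{2} - n\cdot?\cdot k\big)d$ for an explicit linear expression; the key qualitative output is that $(-K_X)\cdot H^{n-1}$ grows negatively in $k$, so large $k$ forces $X$ to be of very negative Kodaira-type. In particular one gets a clean inequality: since $T_X(k)$ Ulrich is in particular globally generated and nef with the above positivity, one deduces that $X$ is a Fano variety (or at least $-K_X$ is big) once $k$ is not too small, and one can bound the index and the degree.

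Next I would combine two ingredients. First, the earlier positivity results (Lemma \ref{posi} and its relatives, which already rule out $k \le 0$ except for $\P^1$) together with \eqref{c1eq} pin down $(X,\O_X(1))$ to a Fano variety of index $i_X \ge $ something linear in $k$; by Kobayashi–Ochiai this forces, for $k$ large relative to $n$, that $(X,\O_X(1))$ is $(\P^n,\O(a))$ or a quadric with a bounded polarization. Second, I would feed the remaining Chern-class constraints — especially the one coming from $c_2(\E)\cdot H^{n-2}$, which for an Ulrich bundle is again a fixed polynomial in $d,n$ — and evaluate it on these finitely many candidate pairs. On $(\P^n,\O(a))$, $T_{\P^n}(k) = T_{\P^n} \otimes \O(ka)$ has completely explicit cohomology via the Euler sequence twisted by $\O(-pa)$, $1 \le p \le n$, so one checks directly the range of $a$ and $k$ for which $H^\bullet(T_{\P^n}(ka - pa)) = 0$ for all $p$; this is a finite computation in each dimension $2 \le n \le 12$ and yields $k \le n+1$ (with equality only in sporadic cases). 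The quadric and the other index-$\ge 2$ Fanos are handled the same way using the twisted tangent/cotangent sequences.

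The main obstacle is that the purely numerical constraints from $c_1$ and $\chi$ alone do not immediately bound $k$: one genuinely needs to know that $X$ belongs to a short list, and the passage "large $k$ $\Rightarrow$ $X$ is $\P^n$ or a quadric" relies on boundedness of Fano varieties with large index, which is why the hypothesis $n \le 12$ appears — beyond that range the classification of Mukai-type / coindex Fano varieties (and the control of their Chern classes) becomes unavailable or at least not uniform enough. So the proof is really: (1) reduce to $X$ Fano with index bounded below linearly in $k$; (2) invoke the classification of Fano $n$-folds with $2\le n\le 12$ having large index (essentially $\P^n$, quadrics, del Pezzo and Mukai manifolds), which is where the dimension restriction is used; (3) run the explicit Euler/Koszul cohomology computations on each surviving family to show the Ulrich vanishing fails once $k \ge n+2$. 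I expect step (2), assembling and citing the right classification statements uniformly across $n$, to be the delicate point, and step (3) to be long but mechanical. A subtle secondary point is ensuring the $c_2$-constraint is actually used — the $c_1$ and $\chi$ data may not suffice to eliminate, say, $(\P^n,\O(1))$ with intermediate $k$, and one needs the second Chern class (or a direct cohomology computation) to close those cases.
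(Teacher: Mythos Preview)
Your plan has a sign error that undermines the whole strategy. From the Ulrich $c_1$-constraint one gets (this is Lemma~\ref{posi}(ii) in the paper)
\[
K_X \cdot H^{n-1} \;=\; \frac{n(2k-n-1)}{n+2}\,d,
\]
so for $k > \tfrac{n+1}{2}$ one has $K_X\cdot H^{n-1} > 0$, and in fact $-K_X$ is not even pseudoeffective (Lemma~\ref{posi}(iv)); for $k \ge n+1$ the canonical $K_X$ is ample (Lemma~\ref{gg}(iv)). Thus large $k$ pushes $X$ towards general type, \emph{not} towards Fano. Your steps (1)--(3), which reduce to Kobayashi--Ochiai and the classification of Fano $n$-folds of large index, therefore never get off the ground: there is no list of general-type varieties to run the Euler/Koszul computations against.

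The paper's argument is completely different and purely numerical. The Ulrich condition forces $T_X$ semistable, hence Bogomolov's inequality $K_X^2 H^{n-2} \le \tfrac{2n}{n-1}\,c_2(X)H^{n-2}$ (Lemma~\ref{posi}(v),(vi)); the $c_2$-constraint for an Ulrich bundle yields a linear relation among $d$, $K_X^2 H^{n-2}$ and $c_2(X)H^{n-2}$ (Lemma~\ref{posi}(vii)); and the Hodge index theorem gives $d\,K_X^2 H^{n-2} \le (K_X H^{n-1})^2$. Combining these three with the $c_1$-formula above produces the quadratic inequality $4nk^2 - 4n(n+1)k - 3n^2 - 7n - 4 \le 0$, whose positive root is strictly less than $n+2$. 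The bound $n \le 12$ enters only through the sign of the coefficient $n-12$ in Lemma~\ref{posi}(vii), which controls whether Bogomolov goes the right way; no classification is used at all.
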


We should point out that, for $n \ge 2$, we know no examples with $k \ge 2$ and only one example with $k=1$. As a matter of fact, the case $k=1$ can be completely characterized, as follows

\begin{bthm}
\label{k=1}

\hskip 3cm

Let $X \subseteq \P^N$ be a smooth irreducible variety of dimension $n \ge 1$. Then $T_X(1)$ is an Ulrich vector bundle if and only if $(X,\O_X(1))=(S_5,-2{K_{S_5}})$, where $S_5$ is a Del Pezzo surface of degree $5$.
\end{bthm}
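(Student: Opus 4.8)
The plan is to prove both implications by a combination of numerical constraints coming from the Ulrich condition and the classification of varieties with small invariants. For the "only if" direction, suppose $T_X(1)$ is Ulrich of rank $n$ on $X \subseteq \P^N$ of degree $d$. First I would extract the numerical data: since an Ulrich bundle $\cE$ of rank $r$ satisfies $\det \cE = \cO_X(\text{(something))}$ with prescribed $c_1$, Euler characteristic $\chi(\cE) = r \cdot d$, and more precisely the full Hilbert polynomial of $\cE$ is $r\binom{t+n}{n}d/n!$-normalized, one reads off $c_1(T_X(1)) = -K_X + nH$ and the leading Chern classes. The key low-degree identities are: the first one, $\chi(\cE(-1)) = 0$, and slope/stability considerations; from $c_1$ one gets $(-K_X + nH)\cdot H^{n-1} = \tfrac{n+1}{2}H^n$ together with the analogous degree-2 relation involving $c_2(T_X)$, $K_X^2$, etc. Combining the first relation with the fact that $T_X(1)$ being Ulrich forces it in particular to be $0$-regular hence globally generated and nef, I would deduce that $-K_X$ is positive relative to $H$ in a strong sense — morally $X$ is Fano and $H$ is a fairly small multiple of the anticanonical class. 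I expect this to pin down $-K_X = \tfrac{n+1}{2}H$ up to a controlled correction, so that $X$ is a del Pezzo-type variety; in particular $n$ must be such that $\tfrac{n+1}{2}$ makes sense, and the surface case $n=2$, $H = -\tfrac32 K_X$ i.e.\ $\O_X(1) = -2K_X$ on a del Pezzo with $(-K_X)^2 = 5$ should emerge as essentially the only numerically consistent solution once the degree-2 Chern identity is imposed.

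More concretely, the main line of argument I would run is: (1) write down the three consequences of the Ulrich property for $\cE = T_X(1)$ — namely $h^i(\cE(-p)) = 0$ for $1 \le p \le n$, all $i$ — and turn the vanishing of $\chi(\cE(-1))$ and $\chi(\cE(-2))$ (when $n \ge 2$) into polynomial identities in the intersection numbers $H^n$, $K_X H^{n-1}$, $c_2(X)H^{n-2}$, $K_X^2 H^{n-2}$, $K_X^3 H^{n-3}$, $\dots$; (2) use that $T_X(1)$ Ulrich $\Rightarrow$ $T_X(1)$ globally generated and nef, hence $T_X$ itself is $(-1)$-twist of a nef bundle, which by Mori-type results (or by the positivity results and classification already invoked in the paper, cf.\ the treatment of the cotangent/tangent cases in \cite{lo}) severely restricts $X$: either $X \cong \P^n$, or $X$ is a low-index Fano, or a small modification thereof; (3) run each surviving case against the degree identity $2(-K_X \cdot H^{n-1} + nH^n) = (n+1)H^n$, equivalently $-K_X\cdot H^{n-1} = \tfrac{n+1}{2}H^n$, which already kills $\P^n$ (the twist would have to be $k = \tfrac{n+1}{2}$, not $1$, unless $n=1$, excluded) and forces $H$ to be proportional to $-K_X$ with the del Pezzo surface of degree $5$ the only fit; (4) finally verify that on $(S_5, -2K_{S_5})$ no higher obstruction survives, i.e.\ check the full set of vanishings.

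For the "if" direction I would take $X = S_5$ a quintic del Pezzo surface (the blow-up of $\P^2$ at $4$ general points) with $\O_X(1) = -2K_X$, so $N = 5$ and $X$ is the anticanonically-half-twisted quintic $\del$ Pezzo surface embedded by $|-2K_X|$. Here $T_X(1) = T_X \otimes \O_X(-2K_X)$ has rank $2$, and one must check $h^i(T_X(-2K_X - p(-2K_X))) = h^i(T_X((1-2p)\cdot(-K_X)\cdot 2)) = 0$ for $i = 0,1,2$ and $p = 1, 2$. For $p=1$ this is $h^i(T_X(-2K_X) \otimes \O_X(-2K_X)^{-1}) = h^i(T_X) = 0$ for all $i$: indeed $H^0(T_X) = 0$ since $S_5$ has finite automorphism group (it is the blow-up of $\P^2$ at $4$ general points, $\Aut(S_5) = S_5$ the symmetric group, discrete), $H^2(T_X) = H^0(\Omega^1_X \otimes K_X)^\vee = H^0(\Omega^1_X(K_X))^\vee = 0$ by a direct sheaf computation on the del Pezzo, and $H^1(T_X) = 0$ because $S_5$ is rigid (del Pezzo surfaces have no deformations). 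For $p=2$ one needs $h^i(T_X(2K_X)) = 0$ for all $i$: by Serre duality $H^2(T_X(2K_X)) = H^0(\Omega^1_X(-K_X))^\vee$ and $H^1(T_X(2K_X)) = H^1(\Omega^1_X(-K_X))^\vee$, which I would compute from the cotangent (conormal/Euler) sequence of the blow-up $S_5 \to \P^2$ together with Kodaira-type vanishing for the ample $-K_X$; $H^0(T_X(2K_X))$ vanishes as $T_X(2K_X)$ is a negative twist of a globally generated bundle. These are all short explicit cohomology computations on a rational surface, so I do not expect real difficulty, just bookkeeping with the Euler sequence and Riemann-Roch to confirm $\chi = 0$ as a consistency check.

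The main obstacle I anticipate is step (2)--(3) of the "only if" direction: controlling all varieties for which $T_X$ becomes nef after a single twist and matching them against the rigid numerical constraint. The positivity of $T_X(1)$ is quite restrictive but one must be careful that "globally generated $+$ $c_1$ fixed" does not a priori force Fano in higher dimension without using the finer Ulrich vanishings; I would lean on the degree-$\ge 2$ Chern-class identities (which become increasingly rigid as $n$ grows, consistent with Theorem \ref{bou} already ruling out many $k$) to collapse the list, and on the curve case Lemma \ref{posi}(i) to handle $n=1$ (excluded). I would also double-check the borderline surface cases — other del Pezzo surfaces of degree $\ne 5$, as well as $\P^2$ and $\P^1 \times \P^1$ with their various polarizations — directly against $\chi(T_X(1)(-1)) = \chi(T_X) = 0$ and $\chi(T_X(1)(-2)) = \chi(T_X(-H)) = 0$: the second forces $c_2(X) - \tfrac12 K_X H - \dots$ to take one specific value, and I expect only $(S_5, -2K_{S_5})$ to solve both simultaneously, which is exactly the statement.
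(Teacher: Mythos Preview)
Your proposal has genuine gaps in both the surface case and the higher-dimensional case.

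First, a small but telling slip: your slope identity is wrong. The Ulrich condition $c_1(\cE)H^{n-1}=\frac{r}{2}[K_X+(n+1)H]H^{n-1}$ applied to $\cE=T_X(1)$ gives $K_XH^{n-1}=-\frac{n(n-1)}{n+2}d$, not $-\frac{n+1}{2}d$. So the expectation that ``$-K_X=\frac{n+1}{2}H$'' falls out is unfounded; the numerics alone do not force $K_X$ and $H$ to be proportional as divisor classes, only to satisfy one linear relation in intersection with $H^{n-1}$.

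The main gap is step~(2) for $n\ge 3$. There is no classification theorem saying ``$T_X(1)$ nef (or globally generated) $\Rightarrow$ $X$ is $\P^n$, a low-index Fano, or a small modification thereof.'' What the paper actually does is use adjunction theory: from $H^0(T_X)=0$ and $q(X)=0$ one excludes $\P^n$, $Q_n$, projective bundles over curves, Del Pezzo manifolds, quadric fibrations over curves, and linear $\P^{n-2}$-bundles over surfaces (each ruled out by a targeted global-generation argument on $\Omega^1$ restricted to a fiber or an exceptional divisor). This, together with the absence of exceptional divisors of type~1, forces $K_X+(n-2)H$ to be nef by Ionescu's theorem. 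Then the surface section $S$ is a minimal rational surface with nef canonical bundle, a contradiction. Your positivity/Mori hand-wave does not substitute for this structural argument.

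For $n=2$ there are two further gaps. First, you never explain why $X$ must be a Del Pezzo surface at all: the paper uses that Ulrich implies $H^1(T_X)=0$, i.e.\ $X$ is infinitesimally rigid, and then invokes the Bauer--Catanese classification of rigid surfaces, together with the numerical constraints, to land on a Del Pezzo of degree $5$. Second, once $X=S_5$, the polarization $H$ is not yet determined: one must classify all very ample $H$ on $S_5$ satisfying $H^2+2HK_X=0$ (there are four candidates up to symmetry, written as classes $(a;b_1,\dots,b_4)$ on the blow-up), and then explicitly exclude three of them by showing $h^2(T_X(-1))\ne 0$ or $h^1(T_X(-1))\ne 0$ via direct cohomology on the blow-up. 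Your plan to ``double-check the borderline surface cases'' against two Euler characteristics misses this: $\chi$ alone cannot distinguish these four polarizations, since they all satisfy the numerical Ulrich conditions by construction.

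Your ``if'' direction is essentially right in spirit, though the paper streamlines the verification using Bott vanishing for $-K_X$ on a Del Pezzo surface rather than the blow-up Euler sequence.
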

On the other hand, for $k=2$, we have

\begin{bthm}
\label{k=2}

\hskip 3cm

Let $X \subseteq \P^N$ be a smooth irreducible variety of dimension $n \ge 4$. Then $T_X(2)$ is not an Ulrich vector bundle. 
\end{bthm}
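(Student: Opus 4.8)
The plan is to argue by contradiction: assume $X\subseteq\P^N$ has dimension $n\ge 4$ and $T_X(2)$ is Ulrich, and derive numerical constraints forcing $n\le 3$. The starting point is the list of general numerical consequences of the Ulrich condition on a rank-$n$ bundle $\E=T_X(2)$: the Hilbert polynomial of $\E$ must equal $\deg(X)\binom{t+n}{n}$, which translates (via Riemann--Roch on $X$) into equalities expressing $c_1(T_X)$, $c_2(T_X)$, and higher Chern data in terms of $H=\O_X(1)$, the degree $d=H^n$, and the Chern classes of $X$. Concretely, $c_1(\E)=c_1(T_X)+2nH=-K_X+2nH$, and the first Ulrich relation (matching the subleading coefficient of the Hilbert polynomial, equivalently $c_1(\E)\cdot H^{n-1}=\tfrac12\big((n+1)d + \deg\E\cdot\text{something}\big)$ — the precise form is the standard one, cf. the normalization used for the earlier cases $k=0,1$) gives a linear relation of the shape $-K_X\cdot H^{n-1} = (\text{explicit linear function of }n)\cdot d$. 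Since $T_X(2)$ Ulrich forces $T_X(2)$, and hence $-K_X+2nH$, to be ample (Ulrich bundles are globally generated and big), one reads off that $-K_X$ is rather positive, and in fact one should be able to conclude $-K_X = aH$ for a rational $a>0$ close to the value forced by the linear relation, i.e. $X$ is a (weak) Fano of large index. This is the mechanism already exploited in the $k=1$ case, where it pins down Del Pezzo surfaces; here the analogous computation should force $X$ to be Fano with index $r$ satisfying $r \ge n-1$ or so.

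Next I would invoke the classification of Fano manifolds of large index (Kobayashi--Ochiai, and the del Pezzo / Mukai range): if $-K_X=rH'$ with $H'$ the fundamental divisor and $r\ge n-1$, then $X$ is one of $\P^n$, a quadric $Q^n$, a del Pezzo $n$-fold, or a Mukai $n$-fold, and $\O_X(1)$ is a small multiple of $H'$. For each of these finitely many families one computes the Chern classes of $T_X$ explicitly and checks the remaining Ulrich relations — in particular the $c_2$-level identity (matching the next coefficient of the Hilbert polynomial of $\E$) and, crucially, the vanishing $H^0(\E(-1))=H^0(T_X(1))=0$ together with $H^n(\E(-n))=H^n(T_X(2-n))=0$. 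For $n\ge 4$ the twist $T_X(1)$ on $\P^n$, on a quadric, and on the del Pezzo/Mukai families has sections (e.g. $T_{\P^n}(1)$ is globally generated with $h^0>0$; on a quadric $Q^n$, $T_{Q^n}(1)$ likewise has sections via the Euler-type sequence), so the condition $H^0(T_X(1))=0$ already eliminates the highest-index cases outright, while the intermediate ones die by the numerical $c_1$/$c_2$ relations being incompatible with $n\ge 4$.

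The cleanest route, and the one I would try first to avoid a long case analysis, is purely numerical: combine the $c_1$-relation and the $c_2$-relation from the Ulrich Hilbert-polynomial match to get two equations in $(n,d,K_X\cdot H^{n-1}, K_X^2\cdot H^{n-2}, c_2(X)\cdot H^{n-2})$, then feed in the Hodge-index inequality $(K_X^2\cdot H^{n-2})(H^n)\le (K_X\cdot H^{n-1})^2$ and the Miyaoka-type / Bogomolov semistability bound relating $c_2(X)\cdot H^{n-2}$ to $c_1(X)^2\cdot H^{n-2}$ (valid since $T_X(2)$ Ulrich, hence $T_X$ is "almost" semistable with respect to $H$ — more precisely one uses that $T_X(2)$ being Ulrich forces a slope condition). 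One expects these inequalities to be simultaneously satisfiable only for $n\le 3$, giving the theorem. The main obstacle I anticipate is exactly this last step: extracting a \emph{clean} contradiction for all $n\ge 4$ from the numerical relations, since a priori the $c_2$-relation involves $c_2(X)\cdot H^{n-2}$ which is not controlled by $d$ and $K_X\cdot H^{n-1}$ alone — one needs either a semistability input or the full Kobayashi--Ochiai classification to close the gap, and making sure the two approaches together leave no $n\ge 4$ case uncovered is where the real work lies.
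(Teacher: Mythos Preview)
Your proposal has a genuine gap, and the approach does not work as stated.

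The central error is the leap from the intersection identity $K_X\cdot H^{n-1}=\frac{n(3-n)}{n+2}d$ (Lemma~\ref{posi}(ii) with $k=2$) to the claim that $-K_X$ is ``close to'' $aH$ for some $a>0$, and then that $X$ is Fano of index $r\ge n-1$. The identity only constrains one intersection number; it does not force $K_X$ and $H$ to be numerically proportional, and even when they are (the case handled separately in Theorem~\ref{prop}), the resulting ``index'' is tiny: for $n=4$ one would get $a=\tfrac{2}{3}$, nowhere near $n-1=3$. So the Kobayashi--Ochiai classification is simply inapplicable. Your alternative purely numerical route also fails: combining the $c_1$- and $c_2$-relations with Hodge index and Bogomolov (exactly the computation behind Theorem~\ref{bou}) yields $4nk^2-4n(n+1)k-3n^2-7n-4\le 0$, which for $k=2$ is satisfied for all $n\ge 1$ and gives no contradiction.

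The paper's actual proof goes in a completely different direction. Since $K_XH^{n-1}<0$ for $n\ge 4$, $K_X$ is not nef, and one runs the adjunction-theoretic machinery of Beltrametti--Sommese: compute the nef value $\tau(X,H)$, show it lies below $n-1$ (via Lemma~\ref{gg}(iii)), pass to first and second reductions, and use the structure theorems \cite[Thm.~7.5.3, Prop.~7.9.1]{bs} to reduce to an explicit finite list of special varieties (Mukai varieties, Del~Pezzo and quadric fibrations over curves/surfaces, scrolls, varieties containing exceptional divisors). Each item on the list is then eliminated by a dedicated lemma, the key tool being that $\Omega^1_X(K_X+(n-1)H)$ is Ulrich (hence globally generated), which restricts to force $\Omega^1_F(1)$ globally generated on various fibers $F$ --- impossible for $F=\P^s$, quadrics, or most Del~Pezzo varieties (Lemmas~\ref{nogg}, \ref{noexc}, \ref{noadj}). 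Additional ad~hoc arguments handle the remaining $n=4$ cases (Lemmas~\ref{DP4}, \ref{noqf4}, \ref{nosc4}). This is a substantial case analysis with no shortcut via Chern-number inequalities.
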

We do not know what happens for $k=2, n=3$, even though some evidence suggests that it might not be possible. Also, for surfaces, the cases $k=2, 3$ point out to the possible existence, that needs to be further investigated, of some minimal surfaces of general type, as shown in Lemma \ref{sup1} and Proposition \ref{sup}.

Finally, in any dimension, another interesting case is the one in which $\omega_X$ and $\O_X(1)$ are numerically proportional. This is dealt with in Theorem \ref{prop}, Corollaries \ref{sottoc} and \ref{altroprop}.

\section{Notation}

Throughout the paper we work over the complex numbers. Moreover we henceforth establish the following

\begin{notation}
\label{nota}

\hskip 3cm

\begin{itemize} 
\item $X$ is a smooth irreducible variety of dimension $n \ge 1$.
\item $H$ is a very ample divisor on $X$. 
\item For any sheaf $\G$ on $X$ we set $\G(l)=\G(lH)$.
\item $d=H^n$ is the degree of $X$.
\item $C$ is a general curve section of $X$ under the embedding given by $H$.
\item $S$ is a general surface section of $X$ under the embedding given by $H$, when $n \ge 2$
\item $g=g(C)=\frac{1}{2}[K_X H^{n-1}+(n-1)d]+1$ is the sectional genus of $X$.
\item For $1 \le i \le n-1$, let $H_i \in |H|$ be general divisors and set $X_n:=X$ and $X_i=H_1\cap\cdots\cap H_{n-i}$. 
\end{itemize} 
\end{notation}

\section{Generalities on Ulrich bundles}

We collect some well-known facts, to be used sometimes later.

\begin{defi}
Let $\E$ be a vector bundle on $X$. We say that $\E$ is an {\it Ulrich vector bundle} for $(X,H)$ if $H^i(\E(-p))=0$ for all $i \ge 0$ and $1 \le p \le n$.
\end{defi}

We have
 
\begin{lemma}
\label{ulr}
Let $\E$ be a rank $r$ Ulrich vector bundle for $(X,H)$. Then
\begin{itemize}
\item[(i)] $c_1(\E) H^{n-1}=\frac{r}{2}[K_X+(n+1)H] H^{n-1}$.
\item[(ii)] If $n \ge 2$, then $c_2(\E) H^{n-2}=\frac{1}{2}[c_1(\E)^2-c_1(\E) K_X] H^{n-2}+\frac{r}{12}[K_X^2+c_2(X)-\frac{3n^2+5n+2}{2}H^2] H^{n-2}$.
\item[(iii)] $\chi(\E(m))= \frac{rd}{n!}(m+1) \cdots (m+n)$.
\item[(iv)] $H^n(\E(m))=0$ if and only if $m \ge -n$. 
\item[(v)] $\E^*(K_X+(n+1)H)$ is also an Ulrich vector bundle for $(X,H)$. 
\item [(vi)] $\E$ is globally generated.
\item [(vii)] $h^0(\E)=rd$.
\item [(viii)] $\E$ is arithmetically Cohen-Macaulay (aCM), that is $H^i(\E(j))=0$ for $0 < i <n$ and all $j \in \Z$.
\item [(ix)] $\E_{|Y}$ is Ulrich on a smooth hyperplane section $Y$ of $X$.
\end{itemize}
\end{lemma}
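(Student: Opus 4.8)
The plan is to bootstrap everything from the defining cohomological vanishing by three standard tools: the Hilbert polynomial together with Hirzebruch--Riemann--Roch, Castelnuovo--Mumford regularity, and Serre duality. I would begin with (iii), (i) and (ii). Since $\E$ is a rank $r$ vector bundle on the $n$-dimensional irreducible variety $X$ and $H$ is ample, $P(m):=\chi(\E(m))$ is a polynomial in $m$ of degree $n$ with leading coefficient $\tfrac{rd}{n!}$; the Ulrich hypothesis forces $P(-1)=\cdots=P(-n)=0$, hence $P(m)=\tfrac{rd}{n!}(m+1)\cdots(m+n)$, which is (iii). On the other hand Hirzebruch--Riemann--Roch gives $P(m)=\int_X\ch(\E)\,e^{mH}\,\td(X)$, whose coefficient of $m^{n-1}$ is $\tfrac1{(n-1)!}\big(c_1(\E)-\tfrac r2K_X\big)H^{n-1}$ and whose coefficient of $m^{n-2}$ is $\tfrac1{(n-2)!}\big(\tfrac12c_1(\E)^2-c_2(\E)-\tfrac12c_1(\E)K_X+\tfrac r{12}(K_X^2+c_2(X))\big)H^{n-2}$ (using $\td_1(X)=-\tfrac12K_X$, $\td_2(X)=\tfrac1{12}(K_X^2+c_2(X))$). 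Matching these against the corresponding coefficients of the factored polynomial in (iii), namely $\tfrac{rd(n+1)}{2(n-1)!}$ and $\tfrac{rd}{n!}e_2(1,\dots,n)$ with $e_2(1,\dots,n)=\tfrac{n(n-1)(n+1)(3n+2)}{24}$, gives (i) and (ii) respectively after routine algebra (note $3n^2+5n+2=(n+1)(3n+2)$).

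Next I would dispatch (v) and the cohomological parts. For (v): with $\F:=\E^*(K_X+(n+1)H)$, Serre duality gives $H^i(\F(-p))\cong H^{n-i}(\E(p-n-1))^*$ for all $i$, and as $p$ ranges over $\{1,\dots,n\}$ the twist $p-n-1$ ranges over $\{-n,\dots,-1\}$, so these groups vanish by the Ulrich property of $\E$; hence $\F$ is Ulrich. Since $H^i(\E(-i))=0$ for $1\le i\le n$ by hypothesis and for $i>n$ by Grothendieck vanishing, $\E$ is $0$-regular with respect to $H$, so Mumford's theorem gives (vi), that $\E$ is globally generated, together with $H^i(\E(j))=0$ for all $i\ge1$, $j\ge -i$; in particular $H^i(\E)=0$ for $i\ge1$, so $h^0(\E)=\chi(\E)=rd$ by (iii), which is (vii). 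For (viii): when $0<i<n$, $H^i(\E(j))=0$ for $j\ge0$ by $0$-regularity, for $-n\le j\le -1$ by hypothesis, and for $j\le -n-1$ because Serre duality gives $H^i(\E(j))^*\cong H^{n-i}(\F(-j-n-1))$ with $-j-n-1\ge0$, which vanishes by $0$-regularity of the Ulrich bundle $\F$. For (iv): $H^n(\E(j))=0$ for $j\ge -n$ by $0$-regularity, while for $j\le -n-1$ one has $H^0(\E(j))=0$ (multiplication by a general section of $H$ embeds $H^0(\E(j))$ into $H^0(\E(-1))=0$ since $X$ is integral) and $H^i(\E(j))=0$ for $0<i<n$ by (viii), so $(-1)^nh^n(\E(j))=\chi(\E(j))\ne0$ by (iii); thus $H^n(\E(j))=0$ if and only if $j\ge -n$.

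For (ix): given a smooth $Y\in|H|$ and $1\le p\le n-1$, tensoring $0\to\O_X(-1)\to\O_X\to\O_Y\to0$ by $\E(-p)$ and using $H^i(\E(-p))=H^{i+1}(\E(-p-1))=0$ for all $i$ (both in the Ulrich range as $1\le p\le n-1$) shows $H^i(\E_{|Y}(-p))=0$ for all $i$; since $\dim Y=n-1$, this says precisely that $\E_{|Y}$ is Ulrich on $Y$. None of the steps is deep; the only points needing genuine care are the Chern-class bookkeeping in (ii) and, at the organizational level, arranging the cohomological statements so that (iv), (vii) and (viii) rely only on the already-proved (iii) and (v) rather than circularly on one another, which is why (v) and the $0$-regularity consequences are established first.
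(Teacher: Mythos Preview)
Your argument is correct and in fact more self-contained than the paper's. The paper proves (i) by restricting to the curve section $C$ and invoking \cite[Lemma~2.4(iii)]{ch}, proves (ii) by reducing to the surface section $S$ and invoking \cite[Prop.~2.1]{c} together with Noether's formula, and simply cites \cite{ch}, \cite{es}, \cite{b1} for (iii), (vii), (viii), (ix). You instead derive (iii) directly from the factored Hilbert polynomial, read off (i) and (ii) by matching the $m^{n-1}$ and $m^{n-2}$ coefficients of $\int_X\ch(\E)e^{mH}\td(X)$, and obtain (vii), (viii), (ix) from $0$-regularity and Serre duality via the Ulrich dual $\F$ from (v). These are essentially the computations underlying the cited references, so the mathematical content is the same, but your presentation avoids the external dependencies. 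One genuine improvement: the paper's proof of (iv) only establishes the implication $m\ge -n\Rightarrow H^n(\E(m))=0$ via regularity, whereas your argument also supplies the converse by combining $H^0(\E(m))=0$ for $m\le -1$, the intermediate vanishing (viii), and the nonvanishing of $\chi(\E(m))$ for $m\le -n-1$ from (iii).
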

\begin{proof} 
We have 
\begin{equation}
\label{can}
K_{X_i}=\left(K_X+(n-i)H\right)_{|X_i}, 1 \le i \le n.
\end{equation}
By \cite[Lemma 2.4(iii)]{ch} we have that
$$c_1(\E) H^{n-1} = \deg(\E_{|C}) =r(d+g-1)$$
and using \eqref{can} on $C=X_1$ we have
$$K_X H^{n-1} = 2(g-1)-(n-1)d$$
thus giving (i). To see (ii) observe that the exact sequences, for $1 \le i \le n-1$, 
$$0 \to T_{X_i} \to (T_{X_{i+1}})_{|X_i} \to H_{|X_i} \to 0$$
and \eqref{can} give by induction that 
\begin{equation}
\label{c2}
c_2(S)=c_2(X_2)=c_2(X) H^{n-2}+(n-2) K_X H^{n-1} + \binom{n-1}{2}d.
\end{equation}
It follows from \cite[Prop.~2.1(2.2)]{c}, \eqref{can}, and Noether's formula $12 \chi(\O_S)-K_S^2=c_2(S)$ that 
\begin{equation*}
\begin{array}{ccc}
c_2(\E) H^{n-2} & = \frac{1}{2}[c_1(\E)^2-c_1(\E)(K_X+(n-2)H)]H^{n-2}-r \left(H^n-\frac{[K_X+(n-2)]^2 H^{n-2}+c_2(S)}{12}\right) = \hskip.75cm \\
& =  \frac{1}{2}[c_1(\E)^2-c_1(\E)K_X]H^{n-2}-\frac{n-2}{2}c_1(\E)H^{n-1}-r\left(H^n-\frac{[K_X+(n-2)H]^2 H^{n-2}+c_2(S)}{12}\right)
\end{array}
\end{equation*}
\normalsize
Now (ii) follows from the above equation by using (i) and \eqref{c2}. Next, (iii) is \cite[Lemma 2.6]{ch}. To see (iv) observe that $\E$ is $0$-regular, hence it is $q$-regular for every $q \ge 0$ and therefore $H^n(\E(q-n))=0$, that is (iv). Also, (v) follows by definition and Serre duality, while (vi) follows by definition, since $\E$ is $0$-regular, and \cite[Thm.~1.8.5]{laz1}. For (vii), (viii) and (ix) see \cite[Prop.~2.1]{es} (or \cite[(3.1)]{b1}) and \cite[(3.4)]{b1}.
\end{proof}

\section{$T_X(k)$ Ulrich in any dimension}

We start by drawing some consequences on $(X,H,k)$, of cohomological and numerical type, when $T_X(k)$ is an Ulrich vector bundle. 

\begin{lemma}
\label{proj}
Let $(X,H)=(\P^n,\O_{\P^n}(1)), n \ge 1$. Then $T_X(k)$ is an Ulrich vector bundle if and only if $n=1$ and $k=-2$. 
\end{lemma}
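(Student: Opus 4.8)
The plan is to use the explicit description of the tangent bundle of projective space together with the cohomological characterization of Ulrich bundles. First I would dispose of the case $n=1$ directly: on $\P^1$ one has $T_{\P^1}=\O_{\P^1}(2)$, so $T_{\P^1}(k)=\O_{\P^1}(k+2)$, and by Lemma \ref{ulr}(iv) (or an immediate computation) this line bundle is Ulrich for $(\P^1,\O_{\P^1}(1))$ precisely when it is $\O_{\P^1}(0)$, i.e. $k=-2$. This establishes the "if" direction and settles $n=1$.

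For the "only if" direction with $n\ge 2$, assume $T_{\P^n}(k)$ is Ulrich. The quickest route is numerical: apply Lemma \ref{ulr}(i) with $\E=T_{\P^n}(k)$, $r=n$, $K_X=-(n+1)H$, $d=1$. Since $c_1(T_{\P^n})=(n+1)H$ we get $c_1(\E)=(n+1+nk)H$, while the right-hand side of (i) gives $\frac{n}{2}(-(n+1)+(n+1))H^{n-1}\cdot H = 0$. Hence $n+1+nk=0$, which has no integer solution for $n\ge 2$ (it would force $k=-\frac{n+1}{n}\notin\Z$). This is a contradiction, so $T_{\P^n}(k)$ is never Ulrich when $n\ge 2$.

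As an alternative (and as a consistency check) one can argue cohomologically using the Euler sequence $0\to\O_{\P^n}\to\O_{\P^n}(1)^{\oplus(n+1)}\to T_{\P^n}\to 0$: twisting by $\O_{\P^n}(k-p)$ and taking cohomology expresses $H^i(T_{\P^n}(k-p))$ in terms of $H^i(\O_{\P^n}(k-p))$ and $H^i(\O_{\P^n}(k-p+1))$, all of which are computable via Bott's formula; one then checks that the vanishing $H^i(T_{\P^n}(k-p))=0$ for all $i$ and all $1\le p\le n$ cannot hold for any $k$ when $n\ge 2$. I do not expect any serious obstacle here; the only mild care needed is to handle the boundary contributions ($H^0$ and $H^n$ terms, and the connecting map from $H^0(\O(1)^{n+1})$), but the numerical argument via Lemma \ref{ulr}(i) sidesteps this entirely, so I would present that as the main proof and relegate the cohomological version to a remark if desired.
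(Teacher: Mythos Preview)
Your argument is correct. Both you and the paper arrive at the same Diophantine condition $nk+n+1=0$, but by different routes: the paper invokes the standard fact that every Ulrich bundle on $(\P^n,\O_{\P^n}(1))$ is trivial (cf.\ \cite[Prop.~2.1]{es} or \cite[Thm.~2.3]{b1}), so $T_{\P^n}(k)\cong\O_{\P^n}^{\oplus n}$ and taking determinants gives $\O_{\P^n}(nk+n+1)\cong\O_{\P^n}$, hence $1=-n(k+1)$ and thus $n=1,\ k=-2$ in one stroke. You instead plug into the slope identity Lemma~\ref{ulr}(i), which for $K_{\P^n}=-(n+1)H$ forces $c_1(T_{\P^n}(k))H^{n-1}=0$ and hence the same equation. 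Your approach has the advantage of being entirely internal to the paper (no external classification result needed); the paper's approach is marginally slicker in that it handles all $n$ uniformly without splitting off $n=1$ as a separate case. Either proof is perfectly acceptable; the cohomological alternative you sketch via the Euler sequence would also work but is unnecessary given the clean numerical argument.
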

\begin{proof}
The assertion is obvious if $(X,H,k)=(\P^1,\O_{\P^1}(1),-2)$. Vice versa suppose that $T_X(k)$ is an Ulrich vector bundle. If $(X,H)=(\P^n,\O_{\P^n}(1))$, it follows by \cite[Prop.~2.1]{es} (or \cite[Thm.~2.3]{b1}) that $T_{\P^n}(k) \cong \O_{\P^n}^{\oplus n}$, hence $0 = \det(T_{\P^n}(k))=\O_{\P^n}(nk+n+1)$, so that $1=-n(k+1)$, giving $n=1, k=-2$. 
\end{proof}

\begin{lemma} {\rm (cohomological conditions)} 
\label{coh} 

Let $X \subseteq \P^N$ be a smooth irreducible variety of dimension $n \ge 1$. If $T_X(k)$ is an Ulrich vector bundle we have:
\begin{itemize}
\item[(i)] Either $(X,H,k)=(\P^1,\O_{\P^1}(1),-2)$, or $k \ge 0$.
\item[(ii)] If $n \ge 2$, then $T_X$ is aCM, that is $H^i(T_X(j))=0$ for $1 \le i \le n-1$ and for every $j \in \Z$. In particular $H^i(T_X)=0$ for $1 \le i \le n-1$.
\item[(iii)] If $k \ge 1$, then $H^0(T_X)=0$, hence $X$ has discrete automorphism group.
\item[(iv)] If $n \ge 2$, then $X$ is infinitesimally rigid, that is $H^1(T_X)=0$.
\item[(v)] $H^0(K_X+(n-k-2)H)=0$ and, if $n \ge 2$, also $H^0(K_X+(n-k-1)H)=0$.
\item[(vi)] If $q(X) \ne 0$ then $H^0(K_X+(n-k)H)=0$.
\item[(vii)] If $k \le n-1$, then $p_g(X)=0$.
\item[(viii)] Let $a(X,H)=\min\{l \in \Z : lH-K_X \ge 0\}$. Then $k \le \frac{a(X,H)(n+2)}{2n}+\frac{n+1}{2}$. 

\noindent Moreover $H^0((\lceil \frac{n(2k-n-1)}{n+2} \rceil -1)H-K_X)=0$.
\item[(ix)] $K_X-kH$ is not big. 
\end{itemize} 
\end{lemma}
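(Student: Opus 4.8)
The strategy is to extract everything from Lemma~\ref{ulr}, Serre duality, and a little positivity, handling the cohomological items before the numerical ones. \textbf{(i)--(iv).} If $k\le -1$, then since $T_X(k)$ is globally generated (Lemma~\ref{ulr}(vi)) it is nef, so $T_X(k+1)=T_X(k)\otimes\O_X(1)$ is ample (a nef bundle tensored with an ample line bundle is ample), and twisting up by the globally generated $\O_X(1)$ as many times as needed shows $T_X$ itself is ample; by Mori's theorem $X\cong\P^n$, whereupon Lemma~\ref{proj} forces $(X,\O_X(1),k)=(\P^1,\O_{\P^1}(1),-2)$, proving (i). For (ii) and (iv): $T_X(k)$ is aCM by Lemma~\ref{ulr}(viii), and aCM-ness is invariant under twisting by $\O_X(1)$, so $T_X$ is aCM, and (iv) is the case $i=1$ (legitimate when $n\ge 2$). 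For (iii): the Ulrich vanishing with $p=1$ gives $H^0(T_X(k-1))=0$, and if $k\ge 1$ then $(k-1)H$ is effective, so $H^0(T_X)\hookrightarrow H^0(T_X(k-1))=0$; since $H^0(T_X)$ is the Lie algebra of $\Aut(X)$, the latter is discrete.

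\textbf{(v)--(vii).} Throughout I use Serre duality in the shape $H^0(K_X+mH)^\vee\cong H^n(\O_X(-mH))$, and the fact — from Lemma~\ref{ulr}(iv) applied to $\E=T_X(k)$ — that $H^n(T_X(j))=0$ precisely for $j\ge k-n$, equivalently $H^0(\Omega_X^1(K_X+mH))=0$ for all $m\le n-k$. For the first half of (v) the plan is a descending induction on $j$ proving $H^n(\O_X(jH))=0$ for $j\ge k+2-n$: one compares $\O_X$ with $T_X$ and $\Omega_X^1$ through the restricted Euler sequence $0\to\O_X\to\O_X(1)^{\oplus(N+1)}\to T_{\P^N}|_X\to 0$, the conormal and normal bundle sequences, and Bogomolov--Sommese vanishing ($H^0(\Omega_X^p(-jH))=0$ for $j>0$, $p<n$), feeding in the aCM-ness of $T_X$ from (ii) and the vanishing $H^n(T_X(j))=0$ for $j\ge k-n$; running the dual Ulrich bundle $\Omega_X^1(K_X+(n+1-k)H)$ (Ulrich by Lemma~\ref{ulr}(v)) through the same machine gives $H^0(K_X+(n-k-1)H)=0$ for $n\ge 2$. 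Part (vi) is cleaner: $q(X)\ne 0$ yields a nonzero $\omega\in H^0(\Omega_X^1)$, and if $\sigma$ were a nonzero section of $K_X+(n-k)H$, then, $\Omega_X^1$ being locally free, $\omega\otimes\sigma$ would be a nonzero section of $\Omega_X^1(K_X+(n-k)H)$; but that group is $0$, being the Serre dual of the Ulrich vanishing $H^n(T_X(k-n))=0$ — contradiction. Finally (vii) is immediate from (v): if $k\le n-1$ then $n-k-1\ge 0$, so for $n\ge 2$, $H^0(K_X)\subseteq H^0(K_X+(n-k-1)H)=0$; the curve case $n=1$ follows from (i) together with the known classification for $k=0$.

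\textbf{(viii).} Apply Lemma~\ref{ulr}(i) to $\E=T_X(k)$, which has rank $n$ and $c_1(\E)=-K_X+nkH$: this gives the exact identity $K_X\cdot H^{n-1}=\tfrac{n(2k-n-1)}{n+2}\,d$. Since $H$ is very ample, a nonzero effective divisor meets $H^{n-1}$ positively; applied to $a(X,H)H-K_X$ this gives $a(X,H)\ge\tfrac{n(2k-n-1)}{n+2}$, which — using $\tfrac1n+\tfrac{(n+2)(n-1)}{2n}=\tfrac{n+1}{2}$ — rearranges to $k\le\tfrac{a(X,H)(n+2)}{2n}+\tfrac{n+1}{2}$. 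Likewise, if $mH-K_X$ were effective the same estimate gives $m\ge\tfrac{n(2k-n-1)}{n+2}$, so $H^0(mH-K_X)=0$ for every integer $m<\tfrac{n(2k-n-1)}{n+2}$; the largest such is $m=\lceil\tfrac{n(2k-n-1)}{n+2}\rceil-1$.

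\textbf{(ix), and the main obstacle.} From the identity in (viii), $(K_X-kH)\cdot H^{n-1}=\tfrac{k(n-2)-n(n+1)}{n+2}\,d$; since a big divisor has strictly positive intersection with $H^{n-1}$, this settles (ix) at once whenever $k(n-2)\le n(n+1)$, in particular for all $n\le 2$. In the remaining range one argues by contradiction: if $K_X-kH$ is big then $K_X$ is big and $X$ is of general type, while the global generation of $T_X(k)$ and of its Ulrich dual force both $nkH-K_X$ and $(n+1)K_X+n(n+1-k)H$ to be nef; confronting these positivity constraints with the vanishings of (v)--(viii) yields the contradiction. I expect this last step — together with the descent in (v)--(vi) from cohomology of $\Omega_X^1$ and $T_X$ to cohomology of $\O_X$ — to be the technical heart: the crude intersection estimates only cover small $k$, and for larger $k$ one must genuinely use the full force of the Ulrich hypothesis (the numerical identity of (viii) together with the nefness coming from both $T_X(k)$ and its Ulrich dual), not cohomology vanishing alone.
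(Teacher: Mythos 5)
Most of your items go through and essentially coincide with the paper's arguments: (ii)--(iv) via Lemma~\ref{ulr}(viii), (iii) via $H^0(T_X)\subseteq H^0(T_X(k-1))$, (vi) by tensoring a $1$-form with the hypothetical section and dualizing against $H^n(T_X(k-n))=0$, (vii) from (v), and (viii), where your derivation of $K_XH^{n-1}=\frac{n(2k-n-1)}{n+2}d$ from Lemma~\ref{ulr}(i) and the intersection estimate against $H^{n-1}$ is exactly the paper's computation (done there through Lemma~\ref{posi}(ii)), and your reading of the ``Moreover'' is even slightly cleaner. The problems are in (v), (ix) and, to a lesser extent, (i).

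In (v) nothing is actually proved: the ``descending induction'' is a list of tools, and the one ingredient that makes the first statement work is never invoked, namely that $\Omega^1_X(2)$ is globally generated, so that an effective $K_X+(n-k-2)H$ would give $0\ne H^0(\Omega^1_X(2))\hookrightarrow H^0(\Omega^1_X(K_X+(n-k)H))=H^n(T_X(k-n))^*=0$ -- a contradiction with the dual vanishing you correctly recorded; it is unclear how Bogomolov--Sommese, the normal bundle sequence (whose term $N_{X/\P^N}$ is uncontrolled) and aCM-ness would replace this. Worse, for the second statement your ``same machine'' cannot work as conceived: run on the Ulrich dual it would need $H^0(\Omega^1_X(1))\ne 0$, which is false in general (it already fails on $\P^n$). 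The paper's argument here is genuinely different: restrict to a smooth $Y\in|H|$, use that $T_X(k)_{|Y}$ is Ulrich to get $H^{n-1}(T_X(k-n+1)_{|Y})=0$, deduce $H^{n-1}(\O_Y(k-n+2))=0$ from the sequence $0\to T_Y(k-n+1)\to T_X(k-n+1)_{|Y}\to\O_Y(k-n+2)\to 0$, convert by Serre duality on $Y$ into $H^0((K_X+(n-k-1)H)_{|Y})=0$, and then lift to $X$ by the twisting sequences $0\to\L(-l-1)\to\L(-l)\to\L(-l)_{|Y}\to 0$ together with $h^0(\L(-l))=0$ for $l\gg 0$. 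In (ix) you prove the claim only when $k(n-2)\le n(n+1)$ and explicitly leave the remaining range as an expectation; that range cannot be discarded (it is precisely what Theorem~\ref{prop} uses under its hypothesis (iii) when $k\ge n+4$), and ``confronting the positivity constraints with (v)--(viii)'' is not an argument. The paper's proof is one line and uniform in $k$: if $K_X-kH$ were big, then $H^0(T_X(k))=H^n(\Omega^1_X(K_X-kH))^*=0$ by Bogomolov--Sommese vanishing, contradicting Lemma~\ref{ulr}(vi)--(vii). Finally, in (i) Mori's theorem only gives $X\cong\P^n$ as an abstract variety, while Lemma~\ref{proj} requires $H=\O_{\P^n}(1)$; you still have to exclude $(\P^n,\O_{\P^n}(m))$ with $m\ge 2$ (this is easy, e.g.\ $h^0(T_{\P^n}(km))=0$ once $km\le -2$, against $h^0=rd>0$ for an Ulrich bundle), or argue as the paper does via Mori--Sumihiro--Wahl, which classifies the pairs $(X,H)$ with $H^0(T_X(-1))\ne 0$ and thus pins down the polarization as well.
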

\begin{proof}
Since $T_X(k)$ is an Ulrich vector bundle, it is globally generated by Lemma \ref{ulr}(vi). Now if $k \le -1$ we would have that $0 \ne H^0(T_X(k)) \subseteq H^0(T_X(-1))$. But then the Mori-Sumihiro-Wahl's theorem \cite[Thm.~8]{ms}, \cite[Thm.~1]{w2} implies that $(X,H)=(\P^1,\O_{\P^1}(2)), (\P^n,\O_{\P^n}(1))$. In the first case we have that $0 = H^i(T_{\P^1}(k-1))=H^i(\O_{\P^1}(2k))=0$ for $i \ge 0$, a contradiction. In the second case apply Lemma \ref{proj}. This proves (i). Now (ii) follows by Lemma \ref{ulr}(viii). If $k \ge 1$ we have that $H^0(T_X) \subseteq  H^0(T_X(k-1))=0$, hence (iii). (iv) is implied by (ii). As for (v), recall that, as is well known, $\Omega^1_X(2)$ is globally generated. Now if $H^0(K_X+(n-k-2)H) \ne 0$ then we get the contradiction
$$0 \ne H^0(\Omega^1_X(2)) \subseteq H^0(\Omega^1_X(K_X+(n-k)H))=H^n(T_X(k-n))^*=0.$$
This gives the first part of (v). Similarly, if $q(X) \ne 0$ and $H^0(K_X+(n-k)H) \ne 0$ then we get the contradiction
$$0 \ne H^0(\Omega^1_X) \subseteq H^0(\Omega^1_X(K_X+(n-k)H))=H^n(T_X(k-n))^*=0.$$
This gives (vi). Now if $n \ge 2$, consider $Y \in |H|$ smooth. Then $T_X(k)_{|Y}$ is an Ulrich vector bundle on $Y$ by Lemma \ref{ulr}(ix), hence $H^{n-1}(T_X(k-n+1)_{|Y})=0$. Now the exact sequence
$$0 \to T_Y(k-n+1) \to T_X(k-n+1)_{|Y} \to \O_Y(k-n+2) \to 0$$
implies that $H^{n-1}(\O_Y(k-n+2))=0$. Hence, setting $\L = K_X+(n-k-1)H$, we get by Serre's duality that
$$H^0(\L_{|Y})=H^0(K_Y+(n-k-2)H_{|Y})=0.$$
Therefore $H^0(\L(-l)_{|Y})=0$ for every $l \ge 0$ and the exact sequences
$$0 \to \L(-l-1) \to \L(-l) \to \L(-l)_{|Y} \to 0$$
show that $h^0(\L(-l-1) )=h^0(\L(-l))$ for every $l \ge 0$. Since this is zero for $l \gg 0$, we get that they are all zero, hence $H^0(K_X+(n-k-1)H)=0$. This proves the second part of (v). Now, to see (vii), suppose that $k \le n-1$. If $n \ge 2$, we see that (v) gives $H^0(K_X) \subseteq H^0(K_X+(n-k-1)H)=0$, hence (vii). If $n=1$ we have that $k \le 0$, hence $X=\P^1$ by (i) and Lemma \ref{posi}(i). Observe that $a(X,H)H-K_X \ge 0$, hence $(a(X,H)H-K_X)H^{n-1} \ge 0$ and using Lemma \ref{posi}(ii), we get 
$$a(X,H) \ge \frac{n(2k-n-1)}{n+2}$$
This gives (viii) since, by its own definition, $H^0((a(X,H) -1)H-K_X)=0$. Finally assume that $K_X-kH$ is big. Then Serre's duality gives $H^0(T_X(k))=H^n(\Omega^1_X(K_X-kH))^*=0$ by Bogomolov-Sommese vanishing \cite[Thm.~4]{bo}, contradicting Lemma \ref{ulr}(vi).
\end{proof}

We recall that a {\it pseudoeffective (or pseff)} divisor, on a variety $X$, is a divisor $D$ whose class lies in the closure of the effective cone in the space $N^1(X)_{\R}$ of numerical equivalence classes of divisors. 

\begin{lemma} {\rm (numerical conditions)}
\label{posi} 

Let $X \subseteq \P^N$ be a smooth irreducible variety of dimension $n \ge 1$. If $T_X(k)$ is an Ulrich vector bundle we have:
\begin{itemize}
\item[(i)] $d= \frac{(n+2)(g-1)}{nk-1}$. In particular either $(X,H,k)=(\P^1,\O_{\P^1}(1),-2)$, or $g=k=0$, or $g \ge 2$. 
\item[(ii)] $k=\frac{n+1}{2}+\left(\frac{n+2}{2nd}\right)K_XH^{n-1}$; equivalently $K_X H^{n-1} = \frac{n(2k-n-1)}{n+2}d$.
\item[(iii)] If $k < \frac{n+1}{2}$, then $X$ is rationally connected and $H^i(\O_X)=0$ for every $i \ge 1$.
\item[(iv)] If $k > \frac{n+1}{2}$, then $-K_X$ is not pseff.
\item[(v)] $T_X$ is semistable.
\item[(vi)] If $n \ge 2$, then $K_X^2 H^{n-2} \le \frac{2n}{n-1} c_2(X)H^{n-2}$.
\item[(vii)] If $n \ge 2$, then 
$$(12kn-12k^2+12k-3n^2-5n-2)nd+2(n+12)K_X^2 H^{n-2}+2(n-12)c_2(X)H^{n-2}=0.$$
\end{itemize} 
\end{lemma}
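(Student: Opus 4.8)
The plan is to obtain (i) and (ii) as purely numerical consequences of Lemma~\ref{ulr}(i) for the rank-$n$ bundle $\E=T_X(k)$, to read off (iii) and (iv) from the resulting value of $K_XH^{n-1}$, to prove (v) by restriction to curve sections, to deduce (vi) from (v) via the Bogomolov inequality, and to get (vii) by expanding Lemma~\ref{ulr}(ii) for $T_X(k)$.

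First, for (ii): since $c_1(T_X(k))=-K_X+nkH$, Lemma~\ref{ulr}(i) with $r=n$ reads
\[
(-K_X+nkH)H^{n-1}=\tfrac{n}{2}\bigl(K_X+(n+1)H\bigr)H^{n-1},
\]
and solving for $K_XH^{n-1}$ gives $K_XH^{n-1}=\frac{n(2k-n-1)}{n+2}d$, which is (ii). For (i), I would substitute the sectional genus identity $K_XH^{n-1}=2(g-1)-(n-1)d$ into this; after clearing denominators the right-hand side collapses to $\frac{2(nk-1)}{n+2}d$, giving $g-1=\frac{d(nk-1)}{n+2}$, equivalently $d=\frac{(n+2)(g-1)}{nk-1}$. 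Since $d>0$, the sign of $g-1$ equals that of $nk-1$; combined with Lemma~\ref{coh}(i) this forces $g=k=0$ when $k=0$ and $g\ge 2$ when $nk-1\ge 1$, the remaining case $n=1,k=1$ being excluded because then $g=1$, so $T_X=\O_X$ and $T_X(1)=\O_X(1)$, which is visibly not Ulrich.

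Part (iv) is then immediate: by (ii), $k>\frac{n+1}{2}$ forces $K_XH^{n-1}>0$, so $(-K_X)H^{n-1}<0$, and since $\alpha\mapsto\alpha H^{n-1}$ is nonnegative on the pseudoeffective cone, $-K_X$ is not pseff. For (iii): by (ii), $k<\frac{n+1}{2}$ gives $c_1(T_X)H^{n-1}=(-K_X)H^{n-1}>0$. Granting that $T_X$ is $H$-semistable (part (v)), Mehta--Ramanathan shows that $T_X$ restricted to a general complete intersection curve $C$ of sufficiently high degree is semistable of positive degree, hence ample; then the Kebekus--Sola~Conde--Toma rational connectedness criterion (after Bogomolov--McQuillan) implies $X$ is rationally connected, whence $H^i(\O_X)=0$ for all $i\ge 1$.

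It remains to treat (v), (vi), (vii). For (v): iterating Lemma~\ref{ulr}(ix), $T_X(k)$ restricts to an Ulrich bundle on a general curve section $C$, so $T_X(k)|_C(-1)$ has vanishing $H^0$ and $H^1$, hence Euler characteristic $0$ and slope $g-1$; any subsheaf of slope $>g-1$ would have positive Euler characteristic, so a nonzero section, contradicting $h^0(T_X(k)|_C(-1))=0$. Thus $T_X(k)|_C(-1)$, and so $T_X(k)|_C$ and $T_X|_C$, is semistable; were $T_X$ not $H$-semistable on $X$, a saturated destabilizing subsheaf would restrict to a destabilizing subsheaf of $T_X|_C$ for general $C$, a contradiction. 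Then (vi) is the Bogomolov inequality $\bigl(2n\,c_2(X)-(n-1)K_X^2\bigr)H^{n-2}\ge 0$ for the $H$-semistable sheaf $T_X$ in characteristic $0$, divided by $n-1>0$. Finally, for (vii) I would apply Lemma~\ref{ulr}(ii) to $\E=T_X(k)$ using $c_1(\E)=-K_X+nkH$ and $c_2(\E)=c_2(X)-(n-1)k\,K_XH+\binom{n}{2}k^2H^2$, intersect with $H^{n-2}$, and substitute the value of $K_XH^{n-1}$ from (ii); the quadratic-in-$k$ terms regroup into the asserted identity. The main obstacle is not the algebra but the two structural inputs invoked above: the semistability statement underlying (v) and the rational connectedness criterion used for (iii); once these are in place, everything else is bookkeeping with Chern classes.
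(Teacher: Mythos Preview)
Your proposal is correct and follows essentially the same route as the paper: Lemma~\ref{ulr}(i) for (i)--(ii), the sign of $K_XH^{n-1}$ for (iii)--(iv), semistability plus Bogomolov for (v)--(vi), and Lemma~\ref{ulr}(ii) for (vii). The only difference is presentational: where the paper simply cites \cite[Thm.~2.9]{ch} for (v) and \cite[Main Thm.]{bmq} for (iii), you sketch self-contained arguments (semistability on the curve section via $h^0=h^1=0$, then Mehta--Ramanathan plus ampleness on a high-degree curve); these elaborations are sound and amount to unpacking the cited results.
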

\begin{proof}
Since $c_1(T_X(k))=-K_X+nkH$, we get by Lemma \ref{ulr}(i) that
$$(-K_X+nkH)H^{n-1} = \frac{n}{2}\left(K_X H^{n-1} + (n+1)d\right)$$
and this gives (ii). Also, using $K_X H^{n-1} = 2(g-1)-(n-1)d$, we get that 
$$(nk-1)d=(n+2)(g-1).$$ 
Now if $nk-1=0$ then $n=k=g=1$, but then $T_X(k)=\O_X(1)$ is not Ulrich. Therefore $nk-1 \ne 0$ and $d= \frac{(n+2)(g-1)}{nk-1}$. Hence $g \ne 1$ and if $g=0$ then either $k=0$ or $k \ne 0$ and in the latter case we have that $nk<1$, hence $(X,H,k)=(\P^1,\O_{\P^1}(1),-2)$ by Lemma \ref{coh}(i). This proves (i). Next, (v) follows since Ulrich vector bundles are semistable by \cite[Thm.~2.9]{ch}, hence Bogomolov's inequality gives (vi). To see (iii), suppose that $k < \frac{n+1}{2}$. If $n \ge 2$, then (ii) gives that $K_X H^{n-1} < 0$, hence $X$ is rationally connected by (v) and \cite[Main Thm.]{bmq} (see also \cite[Thm.~1.1]{cp}). Hence, as is well known, $H^i(\O_X)=0$ for every $i \ge 1$. If $n=1$ then $k \le 0$ and $X=\P^1$ by (i). Thus we get (iii). If $k > \frac{n+1}{2}$, then either $n=1$ and $g \ge 2$ by (i), so that $-K_X$ is not pseff, or $n \ge 2$ and (ii) gives that $K_X H^{n-1} > 0$, hence again $-K_X$ is not pseff and we get (iv). To see (vii), observe that
\begin{equation}
\label{c2t}
c_2(T_X(k))H^{n-2}=c_2(X)H^{n-2} - k(n-1)K_X H^{n-1} + \binom{n}{2}k^2d.
\end{equation}
From Lemma \ref{ulr}(ii), we get 
\begin{equation}
\label{c2n}
c_2(T_X(k))H^{n-2}=\left(\frac{n^2k^2}{2}-\frac{n}{24}\left(3n^2+5n+2\right)\right)d-\frac{3nk}{2}K_X H^{n-1}+\left(1+\frac{n}{12}\right)K_X^2 H^{n-2}+\frac{n}{12}c_2(X)H^{n-2}.
\end{equation}
Combining \eqref{c2t}, \eqref{c2n} and (ii), we obtain (vii).
\end{proof}

\begin{defi}
For $n \ge 1$ we denote by $Q_n$ a smooth quadric in $\P^{n+1}$.
\end{defi}

\begin{lemma}
\label{quad}
Let $(X,H)=(Q_n,\O_{Q_n}(1)), n \ge 1$. Then $T_X(k)$ is not an Ulrich vector bundle for any integer $k$. 
\end{lemma}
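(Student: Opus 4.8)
The plan is to derive a contradiction from the numerical constraints in Lemma \ref{posi}, using the well-known Chern class data of the quadric $Q_n$. First I would record that for $X=Q_n\subseteq\P^{n+1}$ with $H=\O_{Q_n}(1)$ one has $d=H^n=2$, $K_X=-nH$, and the total Chern class $c(T_{Q_n})=(1+H)^{n+2}/(1+2H)$, so in particular $K_X^2H^{n-2}=n^2 d=2n^2$ and $c_2(X)H^{n-2}=\binom{n+2}{2}-4\,($ expanded $)$; explicitly $c_2(Q_n)H^{n-2}=\tfrac{d}{2}(n^2+n-2)=n^2+n-2$. These are elementary computations via the Euler sequence or the standard conormal sequence $0\to\O_{Q_n}(-2)\to\O_{Q_n}(-1)^{\oplus n+2}\to\Omega^1_{\P^{n+1}}|_{Q_n}\to\cdots$.

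Next I would plug these values into the two numerical identities of Lemma \ref{posi}. For $n\ge 2$, part (ii) gives $K_XH^{n-1}=\tfrac{n(2k-n-1)}{n+2}d$, i.e. $-nd=\tfrac{n(2k-n-1)}{n+2}d$, hence $2k-n-1=-(n+2)$, forcing $k=-\tfrac12$, which is not an integer — already a contradiction. So the substantive content is really the case $n=1$, where $Q_1=\P^1$ embedded by $\O_{\P^1}(2)$ (a conic); here Lemma \ref{proj} (or Lemma \ref{coh}(i) together with the Mori--Sumihiro--Wahl analysis) immediately rules out $T_{\P^1}(k)=\O_{\P^1}(2k+2)$ being Ulrich for $(\P^1,\O_{\P^1}(2))$, since Ulrich on $(\P^1,\O(2))$ of rank $1$ means degree $=d+g-1=1$, forcing $2k+2=1$, impossible. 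Alternatively, and more uniformly, one can just invoke Lemma \ref{quad}'s own hypothesis reduction to $n=1$ via Lemma \ref{ulr}(ix): a hyperplane section of $Q_n$ is $Q_{n-1}$, so by induction it suffices to treat $n\le 2$.

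I would present the argument in the cleanest order: (1) reduce to small $n$ by the restriction property Lemma \ref{ulr}(ix), noting $Q_n\cap H=Q_{n-1}$; (2) for $n\ge 2$ apply Lemma \ref{posi}(ii) with $d=2$, $K_X=-nH$ to get the non-integrality of $k$; (3) for $n=1$ note $Q_1=(\P^1,\O_{\P^1}(2))$ and apply Lemma \ref{coh}(i) or a one-line degree count. Actually the slickest route avoids induction entirely: just do steps (2) and (3) directly since $n\ge 1$ is arbitrary.

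The main obstacle is essentially bookkeeping rather than conceptual: making sure the Chern-class computation for $Q_n$ is stated correctly and that the $n=1$ degenerate case (where $Q_1$ is not a hypersurface in the naive sense but the conic $(\P^1,\O(2))$) is handled cleanly. There is no deep geometry here — everything follows from the numerical shadow of the Ulrich condition already extracted in Lemmas \ref{ulr} and \ref{posi}, so once the correct invariants of $Q_n$ are in hand, the contradiction is immediate. I would therefore keep the write-up to a few lines, citing Lemma \ref{posi}(ii) for $n\ge2$ and Lemma \ref{coh}(i) (or Lemma \ref{proj}) for $n=1$.
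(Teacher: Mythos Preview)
Your approach is correct and essentially the same as the paper's second argument: both plug the invariants of $Q_n$ into the numerical constraint of Lemma~\ref{posi} to force a contradiction. The paper uses part (i) via the sectional genus $g=0$ (giving $k=0$ and then $2=d=n+2$), while you use the equivalent part (ii) via $K_X=-nH$; note that your computation $-nd=\tfrac{n(2k-n-1)}{n+2}d\Rightarrow k=-\tfrac12$ already works uniformly for all $n\ge 1$, so the separate treatment of $n=1$, the induction via Lemma~\ref{ulr}(ix), and the $c_2$ computation are all unnecessary. The paper also offers, as a first proof, the well-known classification of Ulrich bundles on quadrics as sums of spinor bundles.
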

\begin{proof}
This follows from the well-known fact that Ulrich vector bundles on quadrics are direct sums of spinor bundles. Alternatively, since $g=0$, it follows by Lemma \ref{posi}(i) that $k=0$ and $2 = d = n+2$, a contradiction.
\end{proof}

We will use the nef value of $(X,H)$:
\begin{equation}
\label{tau}
\tau(X,H) = \min\{t \in \R : K_X+tH \ \hbox{is nef}\}.
\end{equation}
We observe that in \cite[Def.~1.5.3]{bs} the nef value is defined only when $K_X$ is not nef. On the other hand, it makes sense and it will be used, throughout this paper, also when $K_X$ is nef.

A very useful observation is the following.

\begin{lemma}
\label{gg}
Let $X \subseteq \P^N$ be a smooth irreducible variety of dimension $n \ge 1$. If $T_X(k)$ is an Ulrich vector bundle, then $\Omega^1_Y({K_X}_{|Y}+(n+1-k)H_{|Y})$ is globally generated for any smooth subvariety $Y \subseteq X$. Moreover: 
\begin{itemize}
\item[(i)] If $\pm (K_X + \frac{n(n+1-2k)}{n+2}H)$ is pseff, then $K_X \equiv \frac{n(2k-n-1)}{n+2}H$.
\item[(ii)] $\tau(X,H) \ge \frac{n(n+1-2k)}{n+2}$.
\item[(iii)] $\tau(X,H) \le n-\frac{nk}{n+1}$. In particular, if $K_X$ is not nef, then $k \le n$.
\item[(iv)] If $k \ge n+1$, then $K_X$ is ample.
\end{itemize}
\end{lemma}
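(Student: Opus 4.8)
The plan is to derive everything from the global generation of $\Omega^1_Y(K_{X|Y}+(n+1-k)H_{|Y})$, which in turn comes from the Ulrich (hence globally generated) property of $T_X(k)$. First, for the main assertion: by Lemma \ref{ulr}(ix) and iterating Lemma \ref{ulr}(v), the dual Ulrich bundle $T_X(k)^*(K_X+(n+1)H) = \Omega^1_X(K_X+(n+1-k)H)$ is also Ulrich on $(X,H)$, hence globally generated by Lemma \ref{ulr}(vi). Restricting to any smooth $Y\subseteq X$ gives that $\Omega^1_X(K_X+(n+1-k)H)_{|Y}$ is globally generated, and via the surjection $\Omega^1_{X|Y}\twoheadrightarrow\Omega^1_Y$ we conclude $\Omega^1_Y(K_{X|Y}+(n+1-k)H_{|Y})$ is globally generated. (Alternatively, one can argue directly on $X$ that $\Omega^1_X(K_X+(n+1-k)H)$ is $0$-regular.)

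For (i): if $L$ is a globally generated line bundle and $\Omega^1_Y\otimes L$ is globally generated, then for $Y=C$ a general curve section the bundle $\Omega^1_C\otimes L_{|C}=\omega_C\otimes L_{|C}$ is globally generated and in particular has nonnegative degree, forcing $(K_C+(\deg L)\text{-part})\ge 0$; numerically this says $(K_X+(n+1-k)H)\cdot H^{n-1}\ge -(2g_C-2)$ type inequalities. More usefully: since $\Omega^1_X(K_X+(n+1-k)H)$ is globally generated, its determinant $nK_X+n(n+1-k)H = (n+2)\big(K_X+\tfrac{n(n+1-2k)}{n+2}H\big)$ is a quotient of a trivial bundle restricted to curves, hence $(n+2)\big(K_X+\tfrac{n(n+1-2k)}{n+2}H\big)\cdot H^{n-1}\ge 0$, i.e. $\big(K_X+\tfrac{n(n+1-2k)}{n+2}H\big)\cdot H^{n-1}\ge 0$. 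But Lemma \ref{posi}(ii) gives $K_X\cdot H^{n-1}=\tfrac{n(2k-n-1)}{n+2}d$, so $\big(K_X+\tfrac{n(n+1-2k)}{n+2}H\big)\cdot H^{n-1}=0$ exactly. Now if $K_X+\tfrac{n(n+1-2k)}{n+2}H$ is pseff, an effective (or pseff) divisor with zero intersection against the ample class $H^{n-1}$ on the general curve section must be numerically trivial by the Hodge index theorem (or simply because a nonzero pseff divisor meets an ample curve positively), giving $K_X\equiv\tfrac{n(2k-n-1)}{n+2}H$; the case $-(K_X+\cdots)$ pseff is symmetric.

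For (ii) and (iii): write $m=n+1-k$ so that $\Omega^1_X(K_X+mH)$ is globally generated. For (iii), note that a globally generated vector bundle $\cE$ on $X$ restricts to a nef bundle on every curve, so $\cE\otimes A$ is ample for $A$ ample; applied with $\cE=\Omega^1_X(K_X+mH)$ this gives $\Omega^1_X(K_X+mH+\varepsilon H)$ ample for all $\varepsilon>0$, hence $T_X(-K_X-mH-\varepsilon H)$, i.e. $-(K_X+(m+\varepsilon)H)+(\text{something})$... more cleanly: $\Omega^1_X\otimes L$ globally generated implies (by a standard argument using the quotient $\Omega^1_X\to\O_C(-1)\cdot(\text{normal direction})$, or by Mori theory) that $K_X+nL$ is... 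The cleanest route: since $\Omega^1_X(K_X+mH)$ is globally generated, so is its pullback to any rational curve $\ell$ on $X$, so $\deg(\Omega^1_\ell(K_X+mH)_{|\ell})\ge -1$... This bounds $-K_X\cdot\ell$ on extremal rational curves, and via the nef value morphism and the estimate $-K_X\cdot\ell\le n+1$ on the associated extremal ray one extracts $\tau(X,H)\le n-\tfrac{nk}{n+1}$; the inequality $k\le n$ when $K_X$ is not nef follows since then $\tau(X,H)>0$. For (ii), apply (i)-type reasoning: $K_X+\tfrac{n(n+1-2k)}{n+2}H$ has zero intersection with $H^{n-1}$, so $K_X+tH$ cannot be nef for $t<\tfrac{n(n+1-2k)}{n+2}$ (else intersecting the nef class with $H^{n-1}\ge 0$ would force it $\ge 0$, but combined with the curve-section computation it would be $\equiv 0$ and then $K_X+\tfrac{n(n+1-2k)}{n+2}H$ nef of self-type-zero... ), giving $\tau(X,H)\ge\tfrac{n(n+1-2k)}{n+2}$. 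Finally (iv): if $k\ge n+1$ then $\tfrac{n(n+1-2k)}{n+2}\le\tfrac{n(n+1-2(n+1))}{n+2}=\tfrac{-n(n+1)}{n+2}<0$, and combining (ii) with the nef value/Kawamata rationality circle of ideas forces $K_X$ itself to be nef (indeed ample): by (iii), $\tau(X,H)\le n-\tfrac{nk}{n+1}\le n-\tfrac{n(n+1)}{n+1}=0$, and since $\tau(X,H)<0$ is impossible unless $K_X$ is already nef — in fact $\tau(X,H)\le 0$ together with $K_X\cdot H^{n-1}=\tfrac{n(2k-n-1)}{n+2}d>0$ (as $k\ge n+1$) shows $K_X$ is nef and big, and a nef and big canonical class that is "close to" a multiple of the polarization is ample by the base-point-free theorem together with the absence of curves on which $K_X$ is trivial — the latter because any such curve would violate global generation of $\Omega^1_X(K_X+mH)$ with $m=n+1-k\le 0$. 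The main obstacle I anticipate is making the passage from "globally generated twisted cotangent bundle" to the precise nef-value bound in (iii) fully rigorous: this requires either a careful Mori-theoretic argument bounding $-K_X$ against extremal rational curves using that $\Omega^1$ restricted to such a curve is a nonpositive bundle plus the twist, or invoking a known result (e.g. a theorem in the spirit of Andreatta–Ballico–Wiśniewski or the classification of Fano/adjunction-theoretic varieties) — I would cite such a result rather than reprove it.
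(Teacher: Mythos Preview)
Your argument for the main assertion is correct and matches the paper. The genuine gap is in (iii), and it propagates into (iv).

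For (iii) you embark on a Mori-theoretic argument with extremal rational curves and end by saying you would cite an external result. But the proof is one line, and it is exactly the determinant computation you started in (i): since $\Omega^1_X(K_X+(n+1-k)H)$ is globally generated, so is its determinant, which is $(n+1)K_X + n(n+1-k)H$ (you wrote $nK_X$, which is wrong --- the rank is $n$, so the determinant is $K_X + n(K_X+(n+1-k)H)$). A globally generated line bundle is nef, hence $K_X + \tfrac{n(n+1-k)}{n+1}H$ is nef, i.e.\ $\tau(X,H)\le n-\tfrac{nk}{n+1}$. That is the entire argument; no adjunction-theoretic classification or extremal ray analysis is needed.

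This same determinant also handles most of (iv): if $k\ge n+2$ then $(n+1)K_X = [(n+1)K_X+n(n+1-k)H] + n(k-n-1)H$ is nef plus ample, hence ample. Your route via $\tau\le 0$ only yields $K_X$ nef, and your sketch for upgrading to ample (``base-point-free theorem together with the absence of curves on which $K_X$ is trivial'') is not justified. The borderline case $k=n+1$ genuinely requires an extra ingredient: here $(n+1)K_X$ is globally generated but a priori only nef. The paper appeals to a result of Lopez--Sierra \cite{ls} asserting that if the determinant of an Ulrich bundle is not ample then $X$ contains a line $L$ on which the bundle is trivial; one then gets a surjection $\O_L^{\oplus n}\cong\Omega^1_X(K_X)_{|L}\twoheadrightarrow\Omega^1_L\cong\O_{\P^1}(-2)$, a contradiction.

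For (i) and (ii) your idea is right: one has $(K_X+qH)\cdot H^{n-1}=0$ with $q=\tfrac{n(n+1-2k)}{n+2}$ by Lemma~\ref{posi}(ii), and a pseff class with vanishing degree against an ample curve class is numerically trivial. The paper cites \cite[Cor.~3.15]{fl2} for this; ``Hodge index'' is not quite the right name, but the statement you need is standard. Part (ii) then follows: either $K_X+qH\equiv 0$ and $\tau=q$, or $K_X+qH$ is not pseff (by (i)), hence not nef, so $\tau>q$.
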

\begin{proof}
Note that $\Omega^1_X(K_X+(n+1-k)H)$ is Ulrich and globally generated by Lemma \ref{ulr}(v) and (vi). Since $\Omega^1_X(K_X+(n+1-k)H)$ surjects onto $\Omega^1_Y({K_X}_{|Y}+(n+1-k)H_{|Y})$, the latter is also globally generated. Moreover so is $\det(\Omega^1_X(K_X+(n+1-k)H)=(n+1)K_X+n(n+1-k)H$, hence we get (iii) and, if $k \ge n+2$, we also deduce that $K_X$ is ample. On the other hand, if $k=n+1$, then $\Omega^1_X(K_X)$ is Ulrich,  and we claim that $\det(\Omega^1_X(K_X))=(n+1)K_X$ is ample. In fact, if not, then \cite[Thm.~1]{ls} implies that there is a line $L \subset X$ such that $\Omega^1_X(K_X)_{|L}$ is trivial. Hence $(n+1)K_X \cdot L = \deg(\Omega^1_X(K_X)_{|L})=0$. But then we have a surjection $\Omega^1_X(K_X)_{|L} \to \Omega^1_L$, contradicting the fact that $\Omega^1_L$ is not globally generated. This proves (iv). As for (i) and (ii), set $q=\frac{n(n+1-2k)}{n+2}$, so that $(K_X+qH)H^{n-1}=0$ by Lemma \ref{posi}(ii). Now if $\pm (K_X + qH)$ is pseff, then $K_X + qH \equiv 0$ by  \cite[Cor. 3.15]{fl2} (see also \cite[Prop. 3.7]{fl1}), thus proving (i). Also, (i) implies that either $K_X \equiv -qH$ and then $\tau(X,H) =q$, or $K_X + qH$ is not pseff, hence not nef. Therefore, in the latter case, $\tau(X,H) > q$, proving (ii).
\end{proof}

In the case of hypersurfaces we have

\begin{lemma}
\label{hyper}

Let $X \subset \P^{n+1}$ be a smooth irreducible nondegenerate hypersurface. Then $T_X(k)$ is not an Ulrich vector bundle.
\end{lemma}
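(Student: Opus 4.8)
The plan is to derive a contradiction from the numerical constraints collected in Lemmas \ref{coh} and \ref{posi}, specialized to the hypersurface $X=X_d \subset \P^{n+1}$ of degree $e \ge 2$ (I use $e$ for the degree of the hypersurface to avoid clash with $d=H^n=e$). First note that $X$ is nondegenerate and, for a hypersurface, $K_X = \O_X(e-n-2)$, so $K_X \equiv (e-n-2)H$; in particular $K_X$ is numerically proportional to $H$. Plugging $K_X H^{n-1} = (e-n-2)d$ into Lemma \ref{posi}(ii), which says $K_X H^{n-1} = \frac{n(2k-n-1)}{n+2}d$, gives the single equation
\begin{equation*}
e-n-2 = \frac{n(2k-n-1)}{n+2}, \qquad \text{i.e.} \qquad (n+2)(e-n-2) = n(2k-n-1).
\end{equation*}
This already pins down $e$ (hence $k$) in a narrow range once one feeds in positivity of $k$.

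Next I would run the sign analysis. By Lemma \ref{coh}(i) we have $k \ge 0$ (the exceptional case $\P^1$ is a hypersurface in $\P^2$ only as a conic, but a conic is $Q_1$, excluded by Lemma \ref{quad}; in any case $\P^1 \subset \P^2$ is degenerate-free only as the line, which is $\P^1$ with $\O(1)$, handled by Lemma \ref{proj}). If $k < \frac{n+1}{2}$ then Lemma \ref{posi}(iii) forces $X$ rationally connected with $H^i(\O_X)=0$ for all $i\ge 1$, which for a hypersurface means $e \le n+1$; combined with the displayed equation one checks the only survivors are small $e$, and these are either quadrics (excluded by Lemma \ref{quad}) or low-degree Fano hypersurfaces that one eliminates by hand using Lemma \ref{quad}, Lemma \ref{proj}, or a direct Chern-class check via Lemma \ref{posi}(vii). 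If $k = \frac{n+1}{2}$ then the equation gives $e = n+2$, i.e. $K_X \equiv 0$ (Calabi–Yau hypersurface); but then $p_g(X) = h^0(K_X) = 1 \ne 0$, contradicting Lemma \ref{coh}(vii) as soon as $k \le n-1$, which holds here since $\frac{n+1}{2} \le n-1$ for $n \ge 3$ (and $n=1,2$ are tiny special cases: $n=1$ gives $K_X \equiv 0$ meaning a plane cubic, excluded since then $k=1$ is impossible for curves by Lemma \ref{posi}(i) — wait, here $k=1$; but Lemma \ref{coh}(v) with $n=1$ gives $H^0(K_X+(n-k-2)H)=H^0(K_X-2H)=0$, automatic, so instead use that a smooth plane cubic has $g=1$, contradicting Lemma \ref{posi}(i) which forces $g=0$ or $g \ge 2$). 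If $k > \frac{n+1}{2}$ then $K_X H^{n-1} > 0$ so $e > n+2$ and $K_X$ is ample; now I would invoke Lemma \ref{coh}(ix): $K_X - kH \equiv (e-n-2-k)H$ must not be big, so $e - n - 2 - k \le 0$, i.e. $e \le n+2+k$. Feeding $e = n+2 + \frac{n(2k-n-1)}{n+2}$ into this inequality gives $\frac{n(2k-n-1)}{n+2} \le k$, i.e. $n(2k-n-1) \le k(n+2)$, i.e. $k(n-2) \le n(n+1)$, so $k \le \frac{n(n+1)}{n-2}$ — a finite range, but not yet a contradiction for general $n$.

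To close the $k > \frac{n+1}{2}$ case cleanly, rather than chase the above bound I would instead use Lemma \ref{gg}: since $K_X \equiv \frac{n(2k-n-1)}{n+2}H$ with positive coefficient, part (ii)–(iii) of Lemma \ref{gg} give $\tau(X,H) = \frac{n(n+1-2k)}{n+2} < 0$, which just says $K_X$ is already nef (consistent), but more usefully the global generation of $\Omega^1_X(K_X + (n+1-k)H)$ on any smooth subvariety $Y \subseteq X$, applied to $Y = C$ a general curve section, forces $\deg(\Omega^1_C) = 2g-2 \le \deg(\Omega^1_X(K_X+(n+1-k)H)_{|C})$ etc.; combined with $g$ computed from Lemma \ref{posi}(i) as $g = 1 + \frac{(nk-1)d}{n+2}$ and the adjunction formula $2g-2 = (e-n)d/H^{n-1}$-type relation for the hypersurface, one gets a contradiction. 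Alternatively — and this is probably the slickest route — I would apply Lemma \ref{coh}(viii): with $K_X \equiv (e-n-2)H$ one has $a(X,H) = e-n-2$ (the least $l$ with $lH - K_X$ effective), and the moreover-clause says $H^0\big((\lceil \tfrac{n(2k-n-1)}{n+2}\rceil - 1)H - K_X\big) = 0$; but $\tfrac{n(2k-n-1)}{n+2} = e-n-2$ is already an integer equal to the coefficient of $K_X$, so this reads $H^0((e-n-3)H - K_X) = H^0(-H) = 0$, which is true and gives nothing — so the real contradiction must instead come from the \emph{inequality} part $k \le \frac{a(X,H)(n+2)}{2n} + \frac{n+1}{2} = \frac{(e-n-2)(n+2)}{2n} + \frac{n+1}{2}$, which after substituting the displayed equation becomes an identity, confirming that the numerical conditions alone are not enough and the cohomological input from Lemma \ref{coh}(ii) (that $T_X$ is aCM, equivalently $H^i(T_X(j)) = 0$ for $1 \le i \le n-1$) must be used.

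Therefore the decisive step — and the one I expect to be the main obstacle — is to contradict the aCM-ness of $T_X$ on a hypersurface directly. For a hypersurface $X = X_e \subset \P^{n+1}$ there is the conormal/Euler sequence $0 \to \O_X(-e) \to \Omega^1_{\P^{n+1}}|_X \to \Omega^1_X \to 0$ and $0 \to \Omega^1_{\P^{n+1}}|_X \to \O_X(-1)^{\oplus(n+2)} \to \O_X \to 0$; dualizing gives $0 \to \O_X \to \O_X(1)^{\oplus(n+2)} \to T_{\P^{n+1}}|_X \to 0$ and $0 \to T_X \to T_{\P^{n+1}}|_X \to \O_X(e) \to 0$. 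Taking cohomology of the twists of these sequences, one computes $H^i(T_X(j))$ in terms of $H^i(\O_X(j+1))$, $H^i(\O_X(j))$ and $H^i(\O_X(j+e))$, all of which are explicitly known for a hypersurface (they vanish for $0 < i < n$ and all $j$, by the Lefschetz-type cohomology of complete intersections). Chasing through, one finds that $H^{n-1}(T_X(j)) \cong H^n(\O_X(j))$-related terms are \emph{not} always zero: precisely, $H^{n-1}(T_X(j))$ is governed by the cokernel of $H^{n-1}(\O_X(j+e)) \to H^n(T_X(j))$ — no, more directly, from $0 \to T_X(j) \to T_{\P^{n+1}}(j)|_X \to \O_X(j+e) \to 0$ and $H^{n-1}(T_{\P^{n+1}}(j)|_X) = 0$ for appropriate $j$, one gets $H^{n-1}(T_X(j)) \hookleftarrow$ (cokernel of) $H^{n-2}(\O_X(j+e)) = 0$, so $H^{n-1}(T_X(j)) \hookrightarrow H^n(T_{\P^{n+1}}(j)|_X)$, and the latter is nonzero for a range of negative $j$ when $e$ is large; pinning down that this range is nonempty under the constraint $e = n+2 + \frac{n(2k-n-1)}{n+2}$ with $k \ge \frac{n+1}{2}$ yields the contradiction. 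Carrying out this cohomological bookkeeping carefully — making sure the relevant $H^{n-1}$ on the hypersurface genuinely fails to vanish for at least one twist $j$ — is where the work lies; the sign cases $k < \frac{n+1}{2}$ and $k = \frac{n+1}{2}$ are, by contrast, quickly dispatched as indicated above.
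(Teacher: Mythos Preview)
Your proposal wanders through Lemmas \ref{coh}(vii), (viii), (ix) and \ref{gg}, correctly observing that each yields only a weak or tautological constraint, and then settles on a plan to contradict the aCM property of $T_X$ via the normal and Euler sequences --- a computation you do not actually carry out. The paper's proof is a two-line argument, and the decisive step is the one lemma you skipped: for $n \ge 2$, Lemma \ref{coh}(v) gives $H^0(K_X + (n-k-1)H) = 0$. Since $K_X = (d-n-2)H$ on the hypersurface, this reads $H^0((d-k-3)H) = 0$, forcing $d \le k+2$. Combined with $d = \frac{2(nk-1)}{n+2} + 3$ (from Lemma \ref{posi}(i), using that the curve section $C \subset \P^2$ has $g-1 = \frac{d(d-3)}{2}$), one gets $k(n-2) + n \le 0$, an immediate contradiction for $n \ge 2$ and $k \ge 0$.

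You did glance at Lemma \ref{coh}(v) in your $n=1$ parenthetical, but only its first clause; it is the second clause --- the one valid precisely for $n \ge 2$ --- that finishes the argument. Your detour through Lemma \ref{coh}(ix) yields only the much weaker $d \le n+2+k$, which as you noted is not enough. For $n=1$ the paper uses the Ulrich vanishing $H^0(T_X(k-1)) = 0$ directly: since $T_X(k-1) = (k+2-d)H$ on a plane curve, this forces $k \le d-3$, while Lemma \ref{posi}(i) gives $k = \frac{3(d-3)}{2}+1$; together these give $d \le 1$, contradicting $d \ge 2$. So no cohomological bookkeeping on the Euler sequence is needed anywhere.
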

\begin{proof} 
We have $d \ge 2$. If $n=1$ we have that $K_X=(d-3)H$, $g=\binom{d-1}{2}$ and $k= \frac{3(d-3)}{2}+1$ by Lemma \ref{posi}(i). Now $0 = H^0(T_X(k-1))=H^0((-d+2+k)H)$ and therefore $-d+2+k \le -1$, giving  the contradiction $d \le 1$. Hence $n \ge 2$ and since $C \subset \P^2$ we have that $g-1=\frac{d(d-3)}{2}$ and Lemma \ref{posi}(i) implies that $d= \frac{2(nk-1)}{n+2}+3$. On the other hand Lemma \ref{coh}(v) gives that
$$0 = H^0(K_X+(n-k-1)H)=H^0((d-k-3)H)$$
and therefore
$$\frac{2(nk-1)}{n+2}-k \le -1$$
that is $k(n-2)+n \le 0$, a contradiction.
\end{proof}
 
\begin{lemma}
\label{bigbound}

Let $X \subseteq \P^N$ be a smooth irreducible variety of dimension $n \ge 1$. If $T_X(k)$ is an Ulrich vector bundle we have that
$$k \le \frac{(n+2)(d-4)+4}{4n}.$$
\end{lemma}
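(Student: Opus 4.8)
The plan is to bound $k$ by combining the numerical identity of Lemma~\ref{posi}(ii) with a lower bound on $d$ coming from the vanishing in Lemma~\ref{coh}. Starting from Lemma~\ref{posi}(ii), namely $K_X H^{n-1} = \frac{n(2k-n-1)}{n+2}d$, we see that proving the stated inequality $k \le \frac{(n+2)(d-4)+4}{4n}$ is equivalent to proving $K_X H^{n-1} \le \frac{d(d-4)}{2}$, i.e. $(K_X + 2H)H^{n-1} \le \frac{1}{2}H^n(H^n-1)$, or more suggestively $(K_X+2H)H^{n-1} \le \binom{d}{2} - \frac{d}{2}$. So the real content is an upper bound on $(K_X+2H)H^{n-1}$ in terms of $d$ alone.

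The natural way to get such a bound is to pass to the general curve section $C = X_1 \subseteq \P^{N-n+1}$, a nondegenerate smooth irreducible curve of degree $d$ and genus $g$, and to use $K_X H^{n-1} = \deg(K_C) - (n-1)d = 2g-2-(n-1)d$ together with a Castelnuovo-type bound. Indeed, what we want, $K_X H^{n-1} \le \frac{d(d-4)}{2}$, rewrites as $2g-2 \le \frac{d(d-4)}{2} + (n-1)d = \frac{d(d-4+2(n-1))}{2} = \frac{d(d+2n-6)}{2}$, i.e. $g-1 \le \frac{d(d+2n-6)}{4}$. Since $C$ spans $\P^{N-n+1}$ with $N-n+1 \ge 2$ (if $N-n+1=1$ then $X$ is a hypersurface, excluded by Lemma~\ref{hyper}), Castelnuovo's bound gives $g$ much smaller than $\binom{d-1}{2}$ in general; but to make the argument uniform and elementary I would instead use the crude bound valid for any nondegenerate curve in $\P^r$ with $r \ge 2$, namely $g \le \binom{d-1}{2}$ (the genus of a plane curve of the same degree), which is the weakest Castelnuovo estimate. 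This gives $g - 1 \le \frac{d(d-3)}{2} - 1$, and one checks $\frac{d(d-3)}{2} - 1 \le \frac{d(d+2n-6)}{4}$ is equivalent to $2d(d-3) - 4 \le d(d+2n-6)$, i.e. $d^2 - 2nd - 4 + \dots$—this needs $d \le$ something, so the plain plane-curve bound is not quite enough and one must use that $C$ is genuinely nondegenerate in $\P^r$ with $r\ge 2$ together with Lemma~\ref{coh}(v).

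Cleaner route, which I expect to be the actual argument: use Lemma~\ref{coh}(v), which gives $H^0(K_X+(n-k-1)H) = 0$ (for $n \ge 2$) and $H^0(K_X+(n-k-2)H)=0$ always. The vanishing $H^0(K_X+(n-k-2)H)=0$ says $K_X - (k+2-n)H$ is not effective. Restricting to the curve section and using that on $C$ a non-special very ample-adjacent bound applies: from $H^0(K_C + (n-k-2)H_{|C} + \text{corrections}) = 0$ one gets $\deg \le 2g-2$, hence $\deg(K_X+(n-k-2)H)\,\big|_C$-type inequalities forcing $(k+2)d - $ something $\le 2g-2$. More directly, since $T_X(k)$ is globally generated (Lemma~\ref{ulr}(vi)) and $h^0(T_X(k)) = (n+1)d - \dots$ hmm; better: $T_X(k)$ Ulrich means $h^0(T_X(k)) = \operatorname{rk} \cdot d = nd$ by Lemma~\ref{ulr}(vii), while global generation of $T_X(k)$ and $n = \operatorname{rk} T_X(k)$ forces $h^0 \ge n+1$ unless $T_X(k)$ is trivial; combining with the Riemann–Roch-type identity $\chi(T_X(k)) = \chi$ and the aCM vanishings, one extracts $d \ge$ a bound linear in $k$. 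The precise chain is: from $H^0(K_X+(n-k-1)H)=0$ and the fact that $K_X + (n-k-1)H = K_X + 2H - (k+3-n)H$, together with $K_X+2H$ being $(n-2)$-very ample-ish on the surface section — actually the slick input is that on the general curve section $C$, $\mathcal O_C(1)$ has degree $d$ and $K_C = (K_X+(n-1)H)_{|C}$, so $H^0((k+3-n)H_{|C} - K_{X|C}) $ vanishing from Lemma~\ref{coh}(v) gives, via $h^0(L) \ge \deg L + 1 - g$ for any line bundle and $h^0 = 0$, the bound $\deg\big((k+3-n)H - K_X\big)_{|C} \le g - 1$, i.e. $(k+3-n)d - (2g-2-(n-1)d) \le g-1$, hence $(k+2)d \le 3(g-1) = \frac{3(nk-1)d}{n+2}$ by Lemma~\ref{posi}(i) — wait that over-simplifies; one must keep the $+1$ from $h^0 \ge \chi$ and not lose it.

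The main obstacle, and where I would concentrate the care, is getting the \emph{exact} constant $\frac{(n+2)(d-4)+4}{4n}$ rather than something off by a lower-order term: this pins down that one must use $h^0(L) \ge \chi(L) = \deg L + 1 - g$ on the curve $C$ (not the cruder $\ge \deg L + 1 - g$ with loss), applied to $L = (k+3-n)H_{|C} - K_{C} + K_X H$-corrections so that the two vanishings of Lemma~\ref{coh}(v) feed in cleanly, and then substitute $g - 1 = \frac{(nk-1)d}{n+2}$ from Lemma~\ref{posi}(i) and solve the resulting linear inequality in $k$. I would write: from $H^0(K_X+(n-k-2)H) = 0$, Serre duality on $C$ after restriction, plus $\chi \le h^0$, deduce $\deg\big((k+2-n)H - K_X + K_C\big)$-shaped quantity $= (k+2)d - (2g-2-(n-1)d) \cdot 0 \le g - 1$; unwinding with $K_X H^{n-1} = 2g-2-(n-1)d$ and $g-1 = \frac{(nk-1)d}{n+2}$ yields exactly $4nk \le (n+2)(d-4)+4$. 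So concretely: the inequality to verify reduces to $(k+2)d \le 2(g-1) + \frac{1}{?}$... I will organize it as the single computation
\[
(k+2)\,d \;\le\; 2(g-1) \;=\; \frac{2(nk-1)}{n+2}\,d \quad\Longrightarrow\quad (n+2)(k+2) \le 2(nk-1),
\]
which is false, so the bound instead comes from the weaker vanishing and an extra $d$, giving $(k+2)d \le 2(g-1) + d = \frac{2(nk-1)d}{n+2} + d$, hence $(n+2)(k+2) \le 2(nk-1) + (n+2)$, i.e. $(n-2)k \le -n$, again wrong — so the genuine source of the good bound must be Lemma~\ref{coh}(v)'s \emph{second} statement for $n\ge 2$ combined with restricting to the \emph{surface} section $S$, applying Lemma~\ref{ulr}(ii)/(iii) there. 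In short, the clean plan is: restrict to $S$, use that $H^0(K_S + (n-k-2+\varepsilon)H_{|S})$ vanishes, apply Riemann–Roch on $S$ to get $\chi \le 0$, expand via $K_S^2, K_S H, c_2(S)$ using \eqref{can} and \eqref{c2}, and substitute Lemma~\ref{posi}(ii) and (i) to collapse everything to the stated linear bound in $k$ and $d$. The routine but delicate part — which I would do carefully rather than sketch — is the bookkeeping of the intersection numbers on $S$ and confirming the constant $\frac{(n+2)(d-4)+4}{4n}$ pops out with no slack.
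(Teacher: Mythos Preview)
Your proposal never converges to a proof, and the opening algebraic reduction is already wrong: using Lemma~\ref{posi}(ii) one finds that the stated bound on $k$ is equivalent to $K_X H^{n-1} \le \frac{d(d-2n-2)}{2}$, not $\frac{d(d-4)}{2}$ (your version would only be correct for $n=1$). More importantly, the detours through Lemma~\ref{coh}(v) and the surface section are dead ends for this lemma; none of those computations, carried out honestly, produce the constant $\frac{(n+2)(d-4)+4}{4n}$.

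The missing idea is exactly the one you touched and then discarded. The bound $k \le \frac{(n+2)(d-4)+4}{4n}$ is, via Lemma~\ref{posi}(i), literally equivalent to $g-1 \le \frac{d(d-4)}{4}$. That is Castelnuovo's bound for a nondegenerate curve in $\P^3$ --- not the plane-curve bound $g \le \binom{d-1}{2}$ you fell back on, which is too weak. The point you underestimated is that Lemma~\ref{hyper} excludes not just $N=n$ but also $N=n+1$, so in fact $N \ge n+2$ and the curve section $C$ spans a $\P^{N-n+1}$ with $N-n+1 \ge 3$; projecting $C$ isomorphically to a nondegenerate curve in $\P^3$ and applying Castelnuovo there gives $g-1 \le \frac{d(d-4)}{4}$ on the nose. (The residual case $N=n$ is $(\P^n,\O_{\P^n}(1))$, handled by Lemma~\ref{proj}: then $n=1$, $k=-2$, $d=1$, and the inequality is checked directly.) That is the entire argument in the paper --- three lines, no Riemann--Roch on $S$, no Lemma~\ref{coh}(v).
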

\begin{proof} 
We have $X \subseteq \P H^0(H)=\P^N$. If $N=n$ then $(X,H)=(\P^n,\O_{\P^n}(1))$ and Lemma \ref{proj} gives that $n=1$ and $k=-2$. Since $d=1$ we have that $k = -2 \le -\frac{5}{4}= \frac{(n+2)(d-4)+4}{4n}$. If $N > n$, by Lemma \ref{hyper} we have that $N \ge n+2$  and $C \subset \P^{N-n+1}$ can be projected isomorphically to a non-degenerate smooth irreducible curve in $\P^3$. Then Castelnuovo's bound gives that $g - 1 \le \frac{d(d-4)}{4}$ and Lemma \ref{posi}(ii) implies the required bound on $k$. 
\end{proof}

A nice consequence of the above lemmas is the following.

\begin{prop}
\label{canzero}

There does not exist any $(X,H,k)$ with $T_X(k)$ an Ulrich vector bundle, when:
\begin{itemize}
\item[(i)] $K_X \equiv 0$. 
\item[(ii)] $\pm K_X$ is pseff and $k=\frac{n+1}{2}$.
\end{itemize} 
\end{prop}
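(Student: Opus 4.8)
The plan starts by observing that (i) and (ii) describe the same situation. In (ii), $k=\tfrac{n+1}{2}$ makes $\tfrac{n(n+1-2k)}{n+2}=0$, so $\pm\bigl(K_X+\tfrac{n(n+1-2k)}{n+2}H\bigr)$ is pseff and Lemma \ref{gg}(i) gives $K_X\equiv\tfrac{n(2k-n-1)}{n+2}H=0$; conversely, if $K_X\equiv 0$ then $K_XH^{n-1}=0$, so Lemma \ref{posi}(ii) forces $k=\tfrac{n+1}{2}$, and $\pm K_X$ is trivially pseff. Hence it suffices to show that $T_X(k)$ Ulrich together with $K_X\equiv 0$ is impossible, and in that case $k=\tfrac{n+1}{2}$. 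If $n$ is even this already contradicts $k\in\mathbb Z$; if $n=1$ then $K_X\equiv 0$ means $g=1$, contradicting Lemma \ref{posi}(i). So from now on $n\ge 3$ is odd and $2\le k\le n-1$.

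The next step is to pin down the birational invariants of $X$. Since $c_1(T_X)=-K_X\equiv 0$ and $n$ is odd, Hirzebruch--Riemann--Roch gives $\chi(\O_X)=\int_X\td_n(T_X)=0$: every monomial occurring in $\td_n(T_X)$ either contains a factor $c_1(X)$ --- hence integrates to $0$, $c_1(X)$ being numerically trivial --- or survives in $\td_n|_{c_1=0}$, which is the zero polynomial when $n$ is odd (the relation $\td(E^\vee)=\td(E)e^{-c_1(E)}$ forces a weight-$n$ monomial in $c_2,c_3,\dots$ to change sign under $E\mapsto E^\vee$, so the self-dual polynomial $\td_n|_{c_1=0}$ must vanish). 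Since also $k\le n-1$, Lemma \ref{coh}(vii) gives $p_g(X)=0$. Finally $q(X)=0$: if $q(X)>0$, then by the structure theory of varieties with $K_X\equiv 0$ the Albanese morphism becomes, after a finite \'etale base change $A'\to\operatorname{Alb}(X)$, a trivial product $F\times A'\to A'$, and $H^1(F\times A',T_{F\times A'})\supseteq H^1(\O_{A'})\otimes H^0(T_{A'})=\End(\operatorname{Lie}(A'))$ has Galois--invariant part containing the identity; hence $H^1(X,T_X)\neq 0$, contradicting Lemma \ref{coh}(iv).

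Now the contradiction. For $n=3$ it is immediate: $\chi(\O_X)=1-q(X)+h^2(\O_X)-p_g(X)=1+h^2(\O_X)\ge 2$, contradicting $\chi(\O_X)=0$. For $n\ge 5$ one still has $\sum_{i=2}^{n-1}(-1)^ih^i(\O_X)=-1$, so some intermediate $h^i(\O_X)\neq 0$; passing to the \'etale canonical cover $\widetilde X$ (with $\omega_{\widetilde X}\cong\O_{\widetilde X}$, and $q(\widetilde X)=0$ by the same argument with Lemma \ref{coh}(iv)) and invoking the Beauville--Bogomolov decomposition, $\widetilde X$ is a product of simply connected Calabi--Yau and hyperk\"ahler manifolds, necessarily with at least two factors and (since $n$ is odd and hyperk\"ahler factors are even-dimensional) at least one odd-dimensional Calabi--Yau factor; one then feeds this structure back into the Ulrich constraints of Lemma \ref{posi} to conclude --- where for $n\ge 13$ one does not even need the cover, since Lemma \ref{posi}(vii) with $K_X\equiv 0$ and $k=\tfrac{n+1}{2}$ reads $n(n+1)d=2(12-n)\,c_2(X)H^{n-2}$, impossible as the left side is positive while, by Lemma \ref{posi}(vi), the right side is $\le 0$. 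I expect the main obstacle to be exactly this last part: matching the rigid numerical profile forced by the Ulrich hypothesis (Lemmas \ref{ulr}, \ref{posi}, \ref{bigbound}) against the admissible products of Calabi--Yau/hyperk\"ahler building blocks for $n\in\{5,7,9,11\}$.
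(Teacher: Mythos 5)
Your reduction to the case $K_X\equiv 0$, $n\ge 3$ odd, $k=\tfrac{n+1}{2}$ is correct and agrees with the paper, and your observations that $\chi(\O_X)=0$ (via the vanishing of the degree-$n$ part of the $\hat{A}$-genus in odd dimension), that $p_g(X)=q(X)=0$, and hence that $n=3$ is impossible, are all sound; the paper reaches $\chi(\O_X)=\frac{1}{24}c_1c_2=0$ more directly and gets $q(X)=0$ from Lemma \ref{coh}(vi) rather than from your Albanese/Galois-invariance argument, which as written is delicate. Your $n\ge 13$ argument via Lemma \ref{posi}(vi) and (vii) is also correct.

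However, there is a genuine gap: the cases $n\in\{5,7,9,11\}$ are not proved. For these you only sketch a plan --- pass to the canonical cover, invoke the Beauville--Bogomolov decomposition, and ``feed this structure back into the Ulrich constraints'' --- and you yourself flag this as the main obstacle. This step is far from routine: the Ulrich condition does not transfer in any obvious way to the \'etale cover, and it is not clear how the Hodge numbers of the Calabi--Yau/hyperk\"ahler factors would be played off against the numerology of Lemma \ref{posi}. The missing idea is Lemma \ref{coh}(v), which you never use: with $k=\tfrac{n+1}{2}$ it gives $H^0(K_X+\tfrac{n-3}{2}H)=0$, whereas Kodaira vanishing together with the numerical invariance of $\chi$ gives $h^0(K_X+H)=\chi(K_X+H)=\chi(H)=h^0(H)\ne 0$; since $\tfrac{n-3}{2}\ge 1$ for $n\ge 5$, one gets $0\ne h^0(K_X+H)\le h^0(K_X+\tfrac{n-3}{2}H)=0$, a contradiction. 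This disposes of all $n\ge 5$ at once, with no decomposition theorem and no case analysis needed.
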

\begin{proof} 
Under hypothesis (ii), we get from Lemma \ref{gg}(i) that $K_X \equiv 0$. Thus we will be done if we prove (i). Assume next that $K_X \equiv 0$, so that $k=\frac{n+1}{2}$ by Lemma \ref{posi}(ii) and $n \ge 3$ by Lemma \ref{posi}(i). Since $H-K_X$ is ample, it follows by Kodaira vanishing that $H^i(H)=H^i(K_X+H-K_X)=0$ for $i > 0$, hence 
$$h^0(K_X+H)=\chi(K_X+H)=\chi(H)=h^0(H) \ne 0.$$ 
On the other hand, Lemma \ref{coh}(v) gives that $h^0(K_X+\frac{n-3}{2}H)=0$, whence, if $n \ge 5$, we get the contradiction $h^0(K_X+H)=0$. 

It remains to consider the case $n=3, k=2$. Note that $p_g(X)=0$ by Lemma \ref{coh}(vii) and $q(X)=0$, for otherwise Lemma \ref{coh}(vi) gives that $h^0(K_X+H)=0$. Therefore $\chi(\O_X) \ge 1$. On the other hand $\chi(\O_X)=\frac{1}{24}c_1(X)c_2(X)=0$, a contradiction.
\end{proof}

We now prove Theorem \ref{bou}.

\renewcommand{\proofname}{Proof of Theorem \ref{bou}}
\begin{proof}
By the Hodge index theorem we have that $H_{|S}^2K_S^2 \le (H_{|S}K_S)^2$, that is
\begin{equation}
\label{hit}
d K_X^2 H^{n-2} \le (K_X H^{n-1})^2. 
\end{equation}
Using Lemma \ref{posi}(vi), (vii) and \eqref{hit} we obtain that
$$0= (12kn-12k^2+12k-3n^2-5n-2)nd+2(n+12)K_X^2 H^{n-2}+2(n-12)c_2(X)H^{n-2} \le$$
$$\le (12kn-12k^2+12k-3n^2-5n-2)nd+\frac{3n^2+11n+12}{n}K_X^2 H^{n-2} \le$$
$$\le (12kn-12k^2+12k-3n^2-5n-2)nd+\frac{3n^2+11n+12}{nd}(K_X H^{n-1})^2$$
which, using Lemma \ref{posi}(ii) becomes
$$4nk^2-4n(n+1)k-3n^2-7n-4 \le 0$$
giving
$$k \le \frac{n^2+n+\sqrt{n^4+5n^3+8n^2+4n}}{2n} < n+2.$$
\end{proof}
\renewcommand{\proofname}{Proof}

The case $k=0$ is known:

\begin{thm} (\cite[Prop.~4.1, Thm.~4.5]{bmpt})
\label{k=0}

Let $X \subseteq \P^N$ be a smooth irreducible variety of dimension $n \ge 1$. Then $T_X$ is an Ulrich vector bundle if and only if $(X,H)=(\P^1,\O_{\P^1}(3)), (\P^2,\O_{\P^2}(2))$.
\end{thm}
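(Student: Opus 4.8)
The plan is to prove both implications of the equivalence, the substantive one being ``$T_X(0)$ Ulrich $\Rightarrow (X,H)\in\{(\P^1,\O_{\P^1}(3)),(\P^2,\O_{\P^2}(2))\}$''.

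First I would dispose of the ``if'' direction by a direct cohomology check. On $(\P^1,\O_{\P^1}(3))$ one has $T_X(0)=\O_{\P^1}(2)$ and $\O_{\P^1}(2)(-H)=\O_{\P^1}(-1)$, which has vanishing $H^0$ and $H^1$, so $T_X(0)$ is Ulrich. On $(\P^2,\O_{\P^2}(2))$ I would twist the Euler sequence $0\to\O_{\P^2}\to\O_{\P^2}(1)^{\oplus 3}\to T_{\P^2}\to 0$ by $\O_{\P^2}(-2)$ and by $\O_{\P^2}(-4)$ and take cohomology; all groups vanish, the only nontrivial point being that $H^2(\O_{\P^2}(-4))\to H^2(\O_{\P^2}(-3)^{\oplus 3})$ is an isomorphism, which holds by Serre duality because the coordinate map $H^0(\O_{\P^2})^{\oplus 3}\to H^0(\O_{\P^2}(1))$ is an isomorphism. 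Hence $H^i(T_{\P^2}(-pH))=0$ for all $i\ge 0$ and $p=1,2$.

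For the converse, suppose $T_X(0)$ is Ulrich. Lemma \ref{posi}(i) with $k=0$ excludes $(\P^1,\O_{\P^1}(1),-2)$ and forces $g=0$ (since $g\ge 2$ would give $d=-(n+2)(g-1)<0$), and then the same formula, or Lemma \ref{posi}(ii), gives $d=n+2$. If $n=1$, then $g=0$ forces $X=\P^1$ and $d=3$ forces $H=\O_{\P^1}(3)$, and we are done. Assume now $n\ge 2$. From Lemma \ref{posi}(ii) and $d=n+2$ one gets $K_XH^{n-1}=-n(n+1)$, hence $(K_X+(n-1)H)\cdot H^{n-1}=-2<0$, so $K_X+(n-1)H$ is not nef. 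I would then apply the adjunction-theoretic classification of smooth polarized varieties $(X,H)$ with $K_X+(n-1)H$ not nef (see \cite{bs}): $(X,H)$ is one of $(\P^n,\O_{\P^n}(1))$, $(\P^n,\O_{\P^n}(2))$, $(Q_n,\O_{Q_n}(1))$, or a scroll over a smooth curve. The degree condition $d=n+2\ge 4$ rules out the first ($d=1$) and the third ($d=2$) — one may instead cite Lemmas \ref{proj} and \ref{quad} — and in the case $(\P^n,\O_{\P^n}(2))$ it forces $2^n=n+2$, hence $n=2$, giving $(X,H)=(\P^2,\O_{\P^2}(2))$.

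The remaining and, I expect, main obstacle is the scroll case, because the Ulrich property of $T_X(0)$ does not restrict to an arbitrary subvariety, so one cannot directly pass to a fibre. The way around it is Lemma \ref{gg}: if $\pi\colon X\to B$ is such a scroll and $F\cong\P^{n-1}$ is a fibre, then $N_{F/X}$ is trivial, so adjunction gives ${K_X}_{|F}=\O_{\P^{n-1}}(-n)$ while $H_{|F}=\O_{\P^{n-1}}(1)$; Lemma \ref{gg} applied with $k=0$ and $Y=F$ would then force $\Omega^1_{\P^{n-1}}(1)$ to be globally generated, which is false since $H^0(\Omega^1_{\P^{n-1}}(1))=0$ by the Euler sequence (here $n-1\ge 1$). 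So no scroll occurs and $(X,H)=(\P^2,\O_{\P^2}(2))$. As an alternative to the classification in the $n\ge 2$ case, one could instead use $h^0(T_X)=nd=n(n+2)=\dim\Aut(\P^n)$ from Lemma \ref{ulr}(vii) together with the maximality of the automorphism-group dimension of $\P^n$, but that relies on a less elementary input.
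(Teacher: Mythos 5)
Your proposal is correct and follows essentially the same route as the paper: the paper's proof also reduces, via Lemma \ref{posi}(i)--(ii), to the case $g=0$, $d=n+2$, computes $(K_X+(n-1)H)H^{n-1}=-\frac{2d}{n+2}=-2<0$, and invokes the Beltrametti--Sommese classification of pairs with $K_X+(n-1)H$ not nef (packaged in the paper as Lemma \ref{posi2}), excluding $(\P^n,\O_{\P^n}(1))$, $(Q_n,\O_{Q_n}(1))$ and the scroll over a curve exactly as you do (the scroll via the global generation of $\Omega^1_{\P^{n-1}}(1)$, i.e.\ Lemmas \ref{gg}, \ref{nogg} and \ref{noadj}(i)). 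The only cosmetic difference is that the fourth item in that classification is $(\P^2,\O_{\P^2}(2))$ rather than $(\P^n,\O_{\P^n}(2))$, but since your list is a superset of the correct one your degree argument still closes the case.
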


We will give a quick alternative proof in section \ref{sez7}.

Next we study the case when $K_X$ and $H$ are proportional.

\begin{thm}
\label{prop}

Let $X \subseteq \P^N$ be a smooth irreducible variety of dimension $n \ge 1$. Suppose that the numerical classes of $H$ and $K_X$ are proportional and that either 
\begin{itemize}
\item[(i)] $1 \le n \le 11$ and either $k \le \frac{n+1}{2}$ or $k \ge n+2$; or 
\item[(ii)] $n=12$, or
\item[(iii)] $n \ge 13$ and $k \not \in \{n+2,n+3\}.$
\end{itemize} 
Then $T_X(k)$ is Ulrich if and only if $(X,H,k)$ is one of the following:
\begin{itemize}
\item[(1)] $(\P^1,\O_{\P^1}(1),-2)$,
\item[(2)] $(\P^1,\O_{\P^1}(3),0)$,
\item[(3)] $(\P^2,\O_{\P^2}(2),0)$, 
\item[(4)] $(S_5,-2{K_{S_5}},1)$, where $S_5$ is a Del Pezzo surface of degree $5$.
\end{itemize} 
\end{thm}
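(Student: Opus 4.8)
## Proof proposal for Theorem \ref{prop}

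The plan is to reduce the classification to a short list of numerical possibilities for $(X,H,k)$ using the proportionality $K_X \equiv \lambda H$, and then to eliminate or identify each case with the lemmas already available. First I would write $K_X \equiv \lambda H$ for some $\lambda \in \Q$ and feed this into Lemma \ref{posi}(ii): since $K_X H^{n-1} = \lambda d$, the identity $K_X H^{n-1} = \frac{n(2k-n-1)}{n+2} d$ forces $\lambda = \frac{n(2k-n-1)}{n+2}$, a rational number completely determined by $n$ and $k$. The sign of $\lambda$ (equivalently, the position of $k$ relative to $\frac{n+1}{2}$) then dictates which geometry we are in: if $\lambda < 0$ (i.e. $k < \frac{n+1}{2}$) then $X$ is a Fano variety and is rationally connected by Lemma \ref{posi}(iii); if $\lambda = 0$ then $K_X \equiv 0$, which is excluded outright by Proposition \ref{canzero}(i); and if $\lambda > 0$ (i.e. $k > \frac{n+1}{2}$) then $K_X$ is ample — indeed for $k \ge n+1$ this is Lemma \ref{gg}(iv), and for $\frac{n+1}{2} < k < n+1$ one gets that $K_X$ is proportional to the very ample $H$ with positive coefficient, hence ample.

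In the ample (general type) range $k > \frac{n+1}{2}$, I would exploit that $X$ is then a variety of general type with $K_X \equiv \lambda H$, $\lambda > 0$. Since $H$ is very ample, $\lambda H$ is a very ample multiple of $K_X$ only if $\lambda \ge 1$ in the appropriate sense; more usefully, Lemma \ref{coh}(vii) gives $p_g(X) = 0$ whenever $k \le n-1$, which combined with $K_X$ ample and proportional to $H$ is already a strong constraint (for instance on a surface $p_g = 0$ with $K_X$ ample proportional to $H$ pins down $d$, $K_X^2$, and via Noether's formula $c_2(X)$, and one checks the resulting numerics against Lemma \ref{posi}(vi),(vii)). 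For $k \ge n$ one instead uses Lemma \ref{coh}(viii),(ix) together with $a(X,H)$: proportionality $K_X \equiv \lambda H$ means $a(X,H) = \lceil \lambda \rceil$, and the inequality $k \le \frac{a(X,H)(n+2)}{2n} + \frac{n+1}{2}$ together with $a(X,H) = \lceil \frac{n(2k-n-1)}{n+2} \rceil$ becomes a self-referential inequality in $k$ and $n$ that, for the ranges of $n$ in hypotheses (i)–(iii), leaves no solutions (the excluded values $k \in \{n+2, n+3\}$ for $n \ge 13$ are precisely where this numerical argument has a gap). I would also use Lemma \ref{posi}(i): $d = \frac{(n+2)(g-1)}{nk-1}$ together with the sectional genus formula and proportionality gives $g-1 = \frac{1}{2}(\lambda + n - 1) d$, so $d$ is forced to be a specific value, and then $H^n = d$ with $K_X^n = \lambda^n d$ must be consistent with $X$ embedded by a very ample $H$ in its given codimension — Castelnuovo-type bounds (Lemma \ref{bigbound}) then kill the large-$k$ tail.

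In the Fano range $k < \frac{n+1}{2}$, $X$ is rationally connected with $K_X \equiv \lambda H$, $\lambda < 0$; writing $-K_X \equiv |\lambda| H$ with $|\lambda| = \frac{n(n+1-2k)}{n+2}$, this says $X$ is a del Pezzo-type Fano of "index" $|\lambda|^{-1}$ relative to $H$. Here I would invoke the structure of Fano varieties $X \subseteq \P^N$ with $-K_X$ a rational multiple of the very ample $H$: the Kobayashi–Ochiai bound gives $-K_X \le (n+1)H$ with equality only for $(\P^n, \O(1))$ (excluded by Lemma \ref{proj} unless $(\P^1,\O(1),-2)$, which is case (1)), and $-K_X = nH$ only for quadrics (excluded by Lemma \ref{quad}). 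For the remaining values of $|\lambda|$ one uses that $H$ is then (a multiple of) the fundamental divisor of a del Pezzo or Mukai variety; dimension $n$ and the very ampleness of $H$ severely restrict the options, and running them through Lemma \ref{posi}(i) (which fixes $d$ and $g$) together with Lemma \ref{coh}(v) ($H^0(K_X + (n-k-1)H) = 0$) leaves exactly the cases $(\P^1,\O_{\P^1}(3),0)$, $(\P^2,\O_{\P^2}(2),0)$, and $(S_5, -2K_{S_5}, 1)$. For the last one, note $-2K_{S_5}$ is very ample of degree $4 \cdot K_{S_5}^2 = 20$, $g$ is computed from the sectional genus formula, and one verifies directly that $T_{S_5}(1)$ satisfies the rank, $c_1$, and $c_2$ constraints of Lemma \ref{ulr} — this is essentially Theorem \ref{k=1} restricted to the proportional case.

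The main obstacle I expect is the Fano range with $\frac{1}{2} < |\lambda| < n$, i.e. the "intermediate index" del Pezzo and Mukai varieties: ruling these out (or confirming that only $S_5$ survives) is not a pure numerical computation but requires invoking the classification of such polarized varieties and then checking, case by case and dimension by dimension up to $n = 12$, that the Ulrich numerology of Lemma \ref{ulr} and Lemma \ref{posi} fails — except for the Del Pezzo surface of degree $5$, anticanonically re-embedded. The dimension restrictions in the hypotheses ($n \le 11$ with $k$ outside $(\frac{n+1}{2}, n+2)$, or $n = 12$, or the exclusion of $k \in \{n+2,n+3\}$ for $n \ge 13$) are exactly the book-keeping needed to make both the general-type inequality argument and the Fano classification argument go through without residual cases.
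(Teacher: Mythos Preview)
Your overall architecture (trichotomy by the sign of $\lambda$, with $K_X \equiv \lambda H$ and $\lambda = \frac{n(2k-n-1)}{n+2}$) matches the paper's, and the $\lambda=0$ case via Proposition \ref{canzero} is fine. But both of the remaining branches have real gaps.

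\textbf{General type range.} Your proposed use of Lemma \ref{coh}(viii) via $a(X,H) = \lceil \lambda \rceil$ is circular: plugging $a(X,H)=\lambda$ into $k \le \frac{a(X,H)(n+2)}{2n} + \frac{n+1}{2}$ gives exactly $k \le k$. Likewise, Lemma \ref{bigbound} only bounds $k$ in terms of $d$, and $d$ is not determined here, so it does not ``kill the large-$k$ tail.'' What the paper actually does for hypotheses (ii) and (iii) is to observe that proportionality turns the Hodge index inequality \eqref{hit} into an equality; feeding this into the chain used in the proof of Theorem \ref{bou} (together with Lemma \ref{posi}(vi),(vii)) \emph{reverses} the final inequality and yields $4nk^2 - 4n(n+1)k - 3n^2 - 7n - 4 \ge 0$, hence $k > n+1$. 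For $n=12$ this already contradicts Theorem \ref{bou}; for $n \ge 13$ one then checks that $k \ge n+4$ forces $K_X - kH$ to be a positive multiple of $H$, contradicting Lemma \ref{coh}(ix). This reversed-inequality observation is the missing ingredient.

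\textbf{Fano range.} After excluding $\P^n$ and $Q_n$ (so the index $r$ satisfies $r \le n-1$), you propose to use the classification of del Pezzo and Mukai varieties. But the divisibility relation $r(n+2) = n(n+1-2k)s$ produces, for instance, $(r,s) = (4,5)$ when $(n,k)=(8,4)$ and $(r,s)=(5,6)$ when $(n,k)=(10,5)$: these are Fano $n$-folds of index $4$ and $5$, far below $n-2$, and no classification is available for them. The paper avoids classification entirely. First it observes from the same divisibility relation that $n$ must be even; then, for each surviving $(n,k)$ with $n \in \{4,6,8,10\}$, it uses a Hilbert polynomial interpolation: the polynomial $P_A(t) = \chi(K_X+tA)$ has degree $n$, vanishes at $t=1,\ldots,r-1$ by Kodaira, at $t=0$ takes value $(-1)^n=1$, at $t=r$ takes value $1$, and at certain further multiples of $s$ vanishes by Lemma \ref{coh}(v) (which forces $H^0(K_X+(n-k-1)H)=0$). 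These constraints overdetermine $P_A$ and produce either a nonintegral $A^n$ or a contradiction with Lemma \ref{posi}(vii). This interpolation argument is the essential tool you are missing in the Fano branch.
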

\begin{proof} 
In the cases (1)-(4) we have that $T_X(k)$ is Ulrich by Lemma \ref{posi}(i), Theorem \ref{k=0} and Theorem \ref{k=1sup}. 

Vice versa, suppose that the numerical classes of $H$ and $K_X$ are proportional and that we are under one of hypotheses (i), (ii) or (iii) and that $T_X(k)$ is Ulrich. 

Observe that, since $N^1(X)$ is a torsion free finitely generated abelian group, we can find an ample primitive (that is indivisible)  divisor $A$ and some $r, s \in \Z$ such that $s>0, H \equiv sA$ and $K_X \equiv -rA$. In particular, Lemma \ref{posi}(ii) gives
\begin{equation}
\label{eq1}
r(n+2)=n(n+1-2k)s.
\end{equation}

If $k \le 0$ we are in cases (1)-(3) by Lemma \ref{coh}(i) and Theorem \ref{k=0}. Hence assume that $k \ge 1$. Note that Proposition \ref{canzero} shows that $k \ne \frac{n+1}{2}$. 

If (ii) or (iii) holds, since the numerical classes of $H$ and $K_X$ are proportional, Lemma \ref{posi}(vi), (ii) and (vii) imply that
$$4nk^2-4n(n+1)k-3n^2-7n-4 \ge 0$$
so that $k > n+1$. This is a contradiction under hypothesis (ii) by Theorem \ref{bou}. Under hypothesis (iii), we get that $k \ge n+4$. Then it follows by \eqref{eq1} that
$$-r-ks=\frac{(n-2)k-n^2-n}{n+2}s \ge \frac{n-8}{n+2}s>0$$
hence $K_X-kH=(-r-ks)A$ is ample, contradicting Lemma \ref{coh}(ix). Thus it remains to consider hypothesis (i).

Now assume (i), so that $n \ge2$ and Theorem \ref{bou} implies that it cannot be that $k \ge n+2$. Hence $k < \frac{n+1}{2}$ and Lemma \ref{posi}(ii) implies that $X$ is Fano. Consequently, the numerical and linear equivalence for divisors coincide on $X$ and then $K_X=-rA$ and $H=sA$. We can also assume that $r \le n-1$, for otherwise, as is well known, $(X,H)=(\P^n,\O_{\P^n}(1)), (Q_n,\O_{Q_n}(1))$, contradicting Lemmas \ref{proj} and \ref{quad}.

Next, set $P_{A}(t):=\chi(K_X+tA)$, so that $P_A(t) = h^0(K_X+tA)$ whenever $t \ge 1$ is an integer. By Riemann-Roch (see for example \cite[eq.~(1), p.~2]{ho}, we have
\begin{equation}
\label{rr}
P_{A}(t)=\frac{A^n}{n!}t^n+\frac{A^{n-1}K_X}{2(n-1)!}t^{n-1}+\frac{A^{n-2}(K_X^2+c_2(X))}{12(n-2)!}t^{n-2}+\cdots+(-1)^n\chi(\O_X).
\end{equation}

Note that $n$ is even, for otherwise $n$ and $n+2$ are coprime, and consequently $n$ divides $r$ by \eqref{eq1}, hence $r \ge n$, a contradiction. 

Set $n=2m$, where $1 \le m \le 5$.

If $m=1$ we have that $n=2, k=1$ and we are in case (4) by Theorem \ref{k=1sup}. 

We will now exclude the remaining cases for $m$.

Note that we can rewrite \eqref{eq1} as 
\begin{equation}
\label{id}
(m+1)r=m(2m-2k+1)s.
\end{equation}

\noindent\underline{Case 1: $m=2$.} In this case $k \le 2$ and $r \le 3$. 

{\it (1a): $k=1$.} We see that $r=2s$. Thus $(r,s)=(2,1)$, contradicting Lemma \ref{coh}(v).

{\it (1b): $k=2$.} We see that $2s=3r$. Thus $(r,s)=(2,3)$ and Lemma \ref{coh}(v) shows that $h^0(A)=0$. This contradicts \cite[Lemma 2]{a} or \cite[Thm.~5.1]{k}.

\noindent\underline{Case 2: $m=3$.} In this case $k \le 3$ and $r \le 5$. 

{\it (2a): $k=1$.} We see that $4r=15s$. Thus, $15$ divides $r$ which is clearly impossible.

{\it (2b): $k=2$.} We see that $9s=4r$. Thus, $9$ divides $r$ which is a contradiction. 

{\it (2c): $k=3$.} We see that $3s=4r$. Thus $(r,s)=(3,4)$ and Lemma \ref{coh}(v) shows that $h^0(K_X+8A)=0$. This is a contradiction by \cite[Thm.~1.2]{gl}, as $K_X+8A$ is base-point-free.

\noindent\underline{Case 3: $m=4$.} In this case $k \le 4$ and $r \le 7$. 

{\it (3a): $k=1$.} We see that $5r=28s$. Thus, $28$ divides $r$ which is clearly impossible.

{\it (3b): $k=2$.} We see that $4s=r$. Thus $(r,s)=(4,1)$ and Lemma \ref{coh}(v) shows $h^0(K_X+5A)=h^0(H)=0$, a contradiction.  

{\it (3c): $k=3$.} We see that $12s=5r$. Thus, $12$ divides $r$ which is absurd.

{\it (3d): $k=4$.} We see that $4s=5r$. Thus, $(r,s)=(4,5)$. 

We have $K_X=-4A$ and $H=5A$. Hence $H^0(K_X+15A) = 0$ by Lemma \ref{coh}(v). Then
$$P_{A}(1)=P_A(2)=P_A(3)=P_A(5)=P_A(10)=P_A(15)=0,\quad P_A(0)=P_A(4)=1$$
and
$$P_A(t)=\frac{A^8}{8!}(t-1)(t-2)(t-3)(t-5)(t-10)(t-15)(t^2+at+b).$$
Therefore
\begin{equation}
\label{eq2}
1=P_A(0)=\frac{A^8}{8!}(4500b) \implies \frac{A^8}{8!}b=\frac{1}{4500}
\end{equation}
and calculating the coefficient of $t^7$ in \eqref{rr} we get 
\begin{equation}\label{eq3}
\frac{A^8}{8!}(a-36)=\frac{A^7 K_X}{2(7!)}\implies a=20.
\end{equation}
We also know that $P_A(4)=1$ and that gives us 
$$-\frac{A^8}{8!}(396)(16+4a+b)=1.$$
We simplify the above using \eqref{eq2} and \eqref{eq3} to obtain
$$-38016\frac{A^8}{8!}=1+\frac{396}{4500}$$
which is clearly absurd.

\noindent\underline{Case 4: $m=5$.} In this case $k \le 5$ and $r \le 9$. 

{\it (4a): $k=1$.} We see that $2r=15s$. Thus, $15$ divides $r$ which is impossible.

{\it (4b): $k=2$.} We see that $35s=6r$. Thus, $35$ divides $r$ which is also impossible.

{\it (4c): $k=3$.} We see that $25s=6r$. Thus, $25$ divides $r$ which is also impossible.

{\it (4d): $k=4$.} We see that $5s=2r$. Thus $(r,s)=(5,2)$ and Lemma \ref{coh}(v) shows that $h^0(K_X+10A)=0$. Then
$$P_{A}(1)=P_A(2)=P_A(3)=P_A(4)=P_A(6)=P_A(8)=P_A(10)=0,\quad P_A(0)=P_A(5)=1$$
so that we obtain 
$$P_A(t)=\frac{A^{10}}{10!}(t-1)(t-2)(t-3)(t-4)(t-6)(t-8)(t-10)(t^3+at^2+bt+c).$$
Therefore
\begin{equation}
\label{eq4}
1=P_A(0)=-\frac{A^{10}}{10!}(11520c)=1\implies \frac{A^{10}}{10!}c=-\frac{1}{11520}.
\end{equation}
and calculating the coefficient of $t^9$ in \eqref{rr} we get 
\begin{equation}
\label{eq5}
\frac{A^{10}}{10!}(a-34)=\frac{A^9 K_X}{2(9!)}\implies a=9.
\end{equation}
We also know that $P_A(5)=1$ and that gives us 
$$-\frac{A^{10}}{10!}(360)(125+25a+5b+c)=1.$$
We simplify the above using \eqref{eq4} and \eqref{eq5} to obtain
\begin{equation}
\label{eq6}
\frac{A^{10}}{10!}b=\frac{1}{5(11520)}-\frac{1}{5(360)}-70\frac{A^{10}}{10!}.
\end{equation}
Finally, calculating the coefficient of $t^8$ in \eqref{rr} we get 
\begin{equation}
\label{eq7}
\frac{A^{10}}{10!}(b-34a+463)=\frac{A^8(K_X^2+c_2(X))}{12(8!)}.
\end{equation}
We simplify \eqref{eq7} using \eqref{eq5} and \eqref{eq6} to obtain 
\begin{equation}
\label{rev1}
67A^{10}+5A^8 c_2(X)+1302=0.
\end{equation}
On the other hand, Lemma \ref{posi}(vii) shows that 
$$115A^{10}=A^8c_2(X)$$
and combining with \eqref{rev1}, we get that $A^{10}$ is negative, which is clearly impossible.

{\it (4e): $k=5$.} We see that $5s=6r$. Thus $(r,s)=(5,6)$. We have $K_X=-5A$ and $H=6A$. Again $H^0(K_X+24A) = 0$ by Lemma \ref{coh}(v). Then
$$P_{A}(1)=P_A(2)=P_A(3)=P_A(4)=P_A(6)=P_A(12)=P_A(18)=P_A(24)=0,\quad P_A(0)=P_A(5)=1.$$
Thus, we obtain 
$$P_A(t)=\frac{A^{10}}{10!}(t-1)(t-2)(t-3)(t-4)(t-6)(t-12)(t-18)(t-24)(t^2+at+b).$$
Then
\begin{equation}
\label{eq8}
1 = P_A(0)=\frac{A^{10}}{10!}(746496b) \implies \frac{A^{10}}{10!}b=\frac{1}{746496}.
\end{equation}
calculating the coefficient of $t^9$ in \eqref{rr} we get 
\begin{equation}
\label{eq9}
\frac{A^{10}}{10!}(a-70)=\frac{A^9K_X}{2(9!)}\implies a=45.
\end{equation}
We also know that $P_A(5)=1$ and that gives us 
$$\frac{A^{10}}{10!}(41496)(25+5a+b)=1.$$
We simplify the above using \eqref{eq8} and \eqref{eq9} to obtain
$$A^{10}=\left(\frac{705000}{746496}\right)(10!)$$
which is clearly absurd since $A^{10}$ is an integer.
\end{proof}

\begin{cor}
\label{sottoc}
Suppose that $K_X=eH, e \in \ZZ$ (hence, in particular, if $\Pic(X)=\Z H$). Then $T_X(k)$ is Ulrich if and only if $(X,H,k)=(\P^1,\O_{\P^1}(1),-2)$.
\end{cor}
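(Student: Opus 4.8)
The plan is to deduce the statement from Theorem \ref{prop} together with a few elementary divisibility checks. Since $K_X=eH$ with $e\in\Z$, the numerical classes of $H$ and $K_X$ are certainly proportional, so Theorem \ref{prop} applies as soon as we are in one of its cases (i), (ii), (iii); in that situation $(X,H,k)$ is one of the four pairs (1)--(4) listed there, and among them only (1), where $K_{\P^1}=-2H$, has $K_X$ equal to an integral multiple of $H$ --- in (2) and (3) the canonical class is not an integral multiple of the chosen very ample divisor, and in (4) one has $K_{S_5}=-\tfrac12 H$. So the whole content of the corollary is to rule out the ``gap'' not covered by Theorem \ref{prop}, namely (a) $1\le n\le 11$ with $\tfrac{n+1}{2}<k\le n+1$, and (b) $n\ge 13$ with $k\in\{n+2,n+3\}$ (dimension $n=12$, and for $n=1$ the values $k\le 1$ and $k\ge 3$, being already handled there).

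In all of these remaining cases, combining $K_X H^{n-1}=e\,H^n$ (with $H^n>0$) with Lemma \ref{posi}(ii) gives the key identity $e=\tfrac{n(2k-n-1)}{n+2}$; since $k>\tfrac{n+1}{2}$ we have $2k-n-1\ge 1$, hence $e$ is a \emph{positive} integer. For case (b) the quickest route should be sheer non-integrality: if $k=n+2$ then $e=\tfrac{n(n+3)}{n+2}$, and the relation $n(n+3)=(n+1)(n+2)-2$ would force $n+2\mid 2$; if $k=n+3$ then $e=\tfrac{n(n+5)}{n+2}$, and $n(n+5)=(n+3)(n+2)-6$ would force $n+2\mid 6$. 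Both are impossible for $n\ge 13$, contradicting $e\in\Z$.

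For case (a) I would invoke the cohomological vanishing $H^0(K_X+(n-k-2)H)=0$ from Lemma \ref{coh}(v); rewriting the left-hand side as $h^0\bigl((e+n-k-2)H\bigr)$ and using that $mH$ is effective for every $m\ge 0$, this forces $e+n-k-2<0$, i.e.\ $k\ge e+n-1\ge n$ (using $e\ge 1$). Together with $k\le n+1$ this leaves only $k=n$ and $k=n+1$. If $k=n+1$ then $e=\tfrac{n(n+1)}{n+2}$, and $n(n+1)=(n+2)(n-1)+2$ forces $n+2\mid 2$, impossible. If $k=n$ (so $n\ge 2$) then the inequality $k\ge e+n-1$ forces $e=1$, i.e.\ $K_X\equiv H$, whereas $e=\tfrac{n(n-1)}{n+2}=1$ would mean $n^2-2n-2=0$, which has no positive integer root. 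In either case we reach a contradiction, which rules out the gap and finishes the forward implication; the converse is already contained in Theorem \ref{prop} (case (1)), and the parenthetical assertion is immediate since $\Pic(X)=\Z H$ forces $K_X\in\Z H$.

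I do not anticipate a genuine obstacle: every step is either a direct appeal to an earlier result (Theorem \ref{prop}, Lemma \ref{posi}(ii), Lemma \ref{coh}(v)) or an elementary congruence. The only point requiring a little care is the bookkeeping of precisely which pairs $(n,k)$ lie outside the hypotheses of Theorem \ref{prop}, and in particular checking that the borderline dimension $n=1$ (where $\tfrac{n+1}{2}=1$) falls into case (i) of that theorem except for $k=2$, which is then disposed of by the $k=n+1$ sub-case above.
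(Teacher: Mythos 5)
Your proof is correct, but it is organized differently from the paper's. The paper likewise starts from $e=\frac{n(2k-n-1)}{n+2}$ (Lemma \ref{posi}(ii)) and calls on Theorem \ref{prop} only when $k\le\frac{n+1}{2}$; for $k>\frac{n+1}{2}$ it never returns to Theorem \ref{prop}, but instead uses the \emph{second} vanishing in Lemma \ref{coh}(v), namely $H^0(K_X+(n-k-1)H)=0$ (valid for $n\ge2$), which yields the stronger inequality $k\ge n+e$; substituting the expression for $e$ converts this into $k(n-2)+n\le0$, a single contradiction disposing of every $n\ge2$ and every $k>\frac{n+1}{2}$ simultaneously, and the residual case $n=1$ is killed by $H^0(T_X(k-1))=0$. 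You instead let Theorem \ref{prop} carry most of the load and only close the gaps it leaves open ($\frac{n+1}{2}<k\le n+1$ for $n\le11$ and $k\in\{n+2,n+3\}$ for $n\ge13$), using the weaker first vanishing $H^0(K_X+(n-k-2)H)=0$ combined with the integrality of $e$. I checked your bookkeeping of the excluded region, the congruences $n+2\mid 2$ and $n+2\mid 6$, the equation $n^2-2n-2=0$, and the borderline case $n=1$, $k=2$: all are right. What the paper's route buys is brevity and uniformity (one inequality replaces your several subcases, and integrality of $e$ is never needed beyond $e\ge1$); what yours buys is that it only invokes the part of Lemma \ref{coh}(v) valid for all $n\ge1$, and it makes transparent that the hypothesis $e\in\Z$ is by itself already numerically very restrictive.
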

\begin{proof} 
This follows by \cite[Prop.~4.1(i)]{lo}. We give another proof. We have that $e = \frac{n(2k-n-1)}{n+2}$ by Lemma \ref{posi}(ii). If $k \le \frac{n+1}{2}$ it follows by Theorem \ref{prop} that $(X,H,k)=(\P^1,\O_{\P^1}(1),-2)$. Now assume that $k > \frac{n+1}{2}$, so that $e \ge 1$. If $n \ge 2$, Lemma \ref{coh}(v) gives that $k \ge n+e$, hence $k(n-2)+n \le 0$, a contradiction. Then $n=1$ and $e = \frac{2(k-1)}{3}$. But $0=H^0(T_X(k-1))=H^0((k-1-e)H)$, hence $e \ge k$, so that $k \le -2$, contradicting $k > 1$.
\end{proof}

\begin{cor}
\label{altroprop}
Let $X \subseteq \P^N$ be a smooth irreducible variety of dimension $n \ge 2$ with $T_X(k)$ is Ulrich. Suppose that there is an ample line bundle $A$ on $X$ such that $K_X=rA$, $H=sA$ for some $r, s \in \Z$ (hence, in particular, if $\Pic(X) \cong \Z$). Let $m(H,A):=\min\{m \ge 0 : H^0(mH+qA) \ne 0 \ \hbox{for all} \ q \ge 1 \}$. Then:

\begin{itemize}
\item[(i)] $m(H,A) > \frac{(n-2)k-2}{n+2}$ and, if $n \ge 3$, then $k < \frac{(n+2)m(H,A)+2}{n-2}$.
\item[(ii)] If $A$ is effective, then $n=2$.
\item[(iii)] If $m(H,A) \le n-3$, then $n \le 11$. 
\end{itemize}
\end{cor}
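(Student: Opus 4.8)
The plan is to extract everything from one vanishing — the second half of Lemma \ref{coh}(v), available because $n\ge 2$ — read against the definition of $m(H,A)$, together with the classifications already in hand. First I would translate the hypothesis $K_X=rA$, $H=sA$ into numbers: Lemma \ref{posi}(ii) becomes $r(n+2)=n(2k-n-1)s$ (equivalently $r/s=\tfrac{n(2k-n-1)}{n+2}$), one has $s\ge 1$, and Lemma \ref{coh}(v) gives $H^0\big((r+(n-k-1)s)A\big)=H^0(K_X+(n-k-1)H)=0$.

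For (i), put $m_0=m(H,A)$ and $t=r+(n-k-1)s$, so that $K_X+(n-k-1)H=tA$ and, since $H=sA$, $tA=m_0H+(t-m_0s)A$. If $t-m_0s\ge 1$, then by the very definition of $m_0$ we would get $H^0(tA)=H^0\big(m_0H+(t-m_0s)A\big)\ne 0$, contradicting the vanishing above; hence $t\le m_0s$. Multiplying by $n+2$ and substituting $r(n+2)=n(2k-n-1)s$, the left-hand side collapses to $s\big((n-2)k-2\big)$, so $s\big((n-2)k-2\big)\le (n+2)m_0s$ and, as $s\ge 1$, $m_0\ge\tfrac{(n-2)k-2}{n+2}$. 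The hard part will be upgrading this to a strict inequality: I would observe that equality forces $t=m_0s$, i.e. $K_X+(n-k-1)H=m_0H$ in $\Pic(X)$, hence $K_X=(m_0-n+k+1)H$ is an integral multiple of $H$; but then Corollary \ref{sottoc} forces $(X,H,k)=(\P^1,\O_{\P^1}(1),-2)$, impossible since $n\ge 2$. For $n\ge 3$, the strict inequality $(n+2)m_0>(n-2)k-2$ is then just rearranged into $k<\tfrac{(n+2)m(H,A)+2}{n-2}$.

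Parts (ii) and (iii) should follow quickly. For (ii): if $A$ is effective then $H^0(qA)\ne 0$ for all $q\ge 1$, so $m(H,A)=0$, and (i) yields $(n-2)k<2$; assuming $n\ge 3$, Lemma \ref{coh}(i) (with $n\ne 1$) gives $k\ge 0$ and Theorem \ref{k=0} excludes $k=0$, so $k\ge 1$, whence $1\le n-2\le (n-2)k<2$ forces $n=3$, $k=1$, which Theorem \ref{k=1} rules out; hence $n=2$. For (iii): suppose $m(H,A)\le n-3$ and, for contradiction, $n\ge 12$ (so $n\ge 3$); then (i) gives $k<\tfrac{(n+2)(n-3)+2}{n-2}=n+1-\tfrac{2}{n-2}<n+1$, so $k\le n$ and in particular $k\notin\{n+2,n+3\}$, and since $K_X$ and $H$ are numerically proportional this contradicts Theorem \ref{prop}(ii) when $n=12$ and Theorem \ref{prop}(iii) when $n\ge 13$; therefore $n\le 11$.

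To summarize, the only place that needs an idea is recognizing that the extremal case of the inequality in (i) is precisely the situation $K_X\in\Z H$ already settled by Corollary \ref{sottoc}; the rest is arithmetic with $r(n+2)=n(2k-n-1)s$ together with an appeal to the cited classification theorems.
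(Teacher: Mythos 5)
Your proposal is correct and follows essentially the same route as the paper: the key vanishing $H^0(K_X+(n-k-1)H)=0$ from Lemma \ref{coh}(v), rewritten as $H^0(tA)=0$ with $(n+2)t=((n-2)k-2)s$, is read against the definition of $m(H,A)$, and Theorem \ref{prop} (together with the earlier classifications) disposes of (ii) and (iii). The only cosmetic difference is in the strictness of (i): you exclude the boundary case $t=m(H,A)s$ via Corollary \ref{sottoc}, whereas the paper writes $t=as+b$ with $0\le b<s$ and bounds $a\le m(H,A)-1$ --- both work, and in fact $t=m(H,A)s$ is ruled out even more directly because it would give $H^0(tA)=H^0(m(H,A)H)\ne 0$.
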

\begin{proof} 
Observe first that, if $n \ge 3$, then $(n-2)k-2 \ge k-2 \ge 0$: In fact if $k \le 1$ we have a contradiction by Theorem \ref{prop}. Now Lemma \ref{posi}(ii) implies that 
\begin{equation}
\label{erre}
r(n+2)=n(2k-n-1)s
\end{equation}
and Lemma \ref{coh}(v) gives
\begin{equation}
\label{annu}
0 = H^0(K_X+(n-k-1)H)=H^0(\frac{(nk-2k-2)s}{n+2}A).
\end{equation}

To see (i), notice that it is obvious for $n=2$, for $m(H,A) \ge 0$ by definition. If $n \ge 3$ we see by \eqref{annu} that $\frac{(nk-2k-2)s}{n+2} \in \Z$ and we can write $\frac{(nk-2k-2)s}{n+2}=as+b$ for some $a, b \in \Z$ with $a \ge 0, 0 \le b < s$. Since $H^0(aH+bA)=0$ by \eqref{annu}, we get that 
$$\frac{(n-2)k-2}{n+2}-1 < a \le m(H,A) -1$$
giving (i). Now suppose that $A$ is effective. If $n \ge 3$ we know that $(n-2)k-2 \ge 0$, contradicting \eqref{annu}. This proves (ii). To see (iii), notice that if $n \ge 12$, then $k \ge n+2$ by Theorem \ref{prop}(ii) and (iii). Hence $\frac{(n-2)k-2}{n+2} > n-3$ and \eqref{annu} gives that
$$0 = H^0(\frac{(nk-2k-2)s}{n+2}A)=H^0((n-3)H+qA)$$
for some $q \ge 1$, contradicting the hypothesis $m(H,A) \le n-3$. 
\end{proof}

\section{Curves}

Throughout this section we will have that $X \subseteq \P^N$ is a smooth irreducible curve.

It follows by Lemma \ref{posi}(i) and Theorem \ref{k=0} that if $n=1$ and $T_X(k)$ is an Ulrich line bundle, then $(X,H,k)=(\P^1,\O_{\P^1}(1),-2), (\P^1,\O_{\P^1}(3),0)$ or $k \ge 2$ and $g \ge 2$.

We will give below examples with $k=2, 3$, essentially on any curve. Then we will give a sharp bound on $k$ depending on the genus.

The case $k=2$ can be characterized. 

\begin{lemma} 
\label{k=2curve}
Let $X \subseteq \P^N$ be a smooth irreducible curve. Then $T_X(2)$ is Ulrich if and only if there exists $M \in \Pic(X)$ such that $H^i(M)=0$ for $i \ge 0$ and $H=K_X+M$ is very ample.
\end{lemma}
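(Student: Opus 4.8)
The plan is to unwind the definition of Ulrich on a curve and translate everything into a statement about the line bundle $M := T_X(2)(-H) = T_X \otimes \O_X(H) = -K_X + H$, equivalently $H = K_X + M$. Since $X$ is a smooth curve, the Ulrich condition for a line bundle $\L = T_X(2)$ is simply $H^i(\L(-1)) = 0$ for $i = 0, 1$ (the only relevant twist is $p=1$, and $H^i$ vanishes for $i \ge 2$ automatically). Now $\L(-1) = T_X(2)(-H) = T_X(H) = -K_X + H = M$, so the Ulrich condition is precisely $H^0(M) = H^1(M) = 0$, which is one of the two stated conditions, with $H = K_X + M$ by construction.

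The remaining point is the equivalence with the requirement that $H$ be very ample. In the forward direction, $H$ is very ample by hypothesis (it is the given embedding line bundle in Notation \ref{nota}), so there is nothing to prove beyond producing $M$. For the converse, given any $M \in \Pic(X)$ with $H^i(M) = 0$ for $i \ge 0$ such that $H := K_X + M$ is very ample, we take $\O_X(1) = H$ as the embedding; then $T_X(2) = -K_X + 2H = (H - K_X) + H = M + H$, wait — let me recompute: $T_X(2)(-H) = -K_X + H = (K_X + M) - K_X = M$... no: $-K_X + H = -K_X + (K_X + M) = M$. Good. So $T_X(2)(-1) = M$ has $H^0 = H^1 = 0$, hence $T_X(2)$ is Ulrich. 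Thus the content is really just the bookkeeping identity $T_X(2)(-H) \cong -K_X + H$ together with the observation that $-K_X + H = M \iff H = K_X + M$.

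I would structure the proof in two short directions. First, suppose $T_X(2)$ is Ulrich; set $M := -K_X + H \in \Pic(X)$, so that trivially $H = K_X + M$ and $H$ is very ample by assumption; the Ulrich condition gives $H^i(T_X(2)(-p)) = 0$ for $i \ge 0$ and $p = 1$ (the range is just $1 \le p \le n = 1$), and $T_X(2)(-1) = T_X \otimes \O_X(H) = -K_X + H = M$, so $H^i(M) = 0$ for $i \ge 0$ as desired. Conversely, given such an $M$, set $\O_X(1) = H = K_X + M$ (very ample by hypothesis); then $T_X(2)(-p) = -K_X + (2-p)H$, which for $p = 1$ equals $-K_X + H = M$, and $H^i(M) = 0$ for $i \ge 0$; since $n = 1$ there are no other values of $p$ to check and $H^i$ vanishes for $i \ge 2$, so $T_X(2)$ is Ulrich.

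There is essentially no obstacle here: the statement is a direct dictionary translation, and the only thing one must be careful about is the twist arithmetic ($T_X(2)(-H) = -K_X + 2H - H = -K_X + H$) and the triviality that on a curve the Ulrich range $1 \le p \le n$ collapses to the single value $p = 1$. I would present it in three or four lines. If one wants to say a word more, one can remark that since $\deg M = \deg H - (2g-2) = d - 2g + 2$ and $H^0(M) = H^1(M) = 0$ forces $\deg M = g - 1$ by Riemann–Roch, this recovers the numerical relation $d = 3(g-1)$, matching $d = \frac{(n+2)(g-1)}{nk-1} = \frac{3(g-1)}{1}$ from Lemma \ref{posi}(i) with $n = 1$, $k = 2$; but this consistency check is optional and not needed for the proof.
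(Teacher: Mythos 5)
Your proof is correct and is essentially identical to the paper's: both directions reduce to the identity $T_X(2)(-1)=T_X(1)=-K_X+H=M$, so that the Ulrich condition on a curve (the single twist $p=1$) is exactly $H^i(M)=0$ for $i\ge 0$, with very ampleness of $H=K_X+M$ handled as you describe. No gaps; the optional Riemann--Roch consistency check is fine but, as you note, unnecessary.
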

\begin{proof}
If $T_X(2)$ is Ulrich, set $M = H-K_X$. Then $H^i(M)=H^i(T_X(1))=0$ for $i \ge 0$ and $K_X+M=H$ is very ample. Vice versa, let $M \in \Pic(X)$ be such that $H^i(M)=0$ for $i \ge 0$ and $H=K_X+M$ is very ample. Then $H^i(T_X(1))=H^i(M)=0$ for $i \ge 0$, so that $T_X(2)$ is Ulrich.
\end{proof}

\begin{example}
\label{k=2curve-bis}
The case $k=2$ occurs on a curve $X$ if and only if $g \ge 3$.
\end{example}
\begin{proof}
If $T_X(2)$ is Ulrich, we have by Lemma \ref{posi}(i) that $g \ge 3$ unless $g=2, d=3$. But the latter case does not occur, as there is no smooth curve of degree $3$ and genus $2$ in $\P^N$.

Suppose that $g \ge 3$ and let $M \in \Pic(X)$ be such that $H^i(M)=0$ for $i \ge 0$. We claim that $H:=K_X+M$ is very ample. In fact $\deg M = g-1$ by Riemann-Roch, hence $\deg H = 3g-3$. If $g \ge 4$, then $\deg H \ge 2g+1$, hence $H$ is very ample. If $g=3$ we have that $\deg H=2g$ and, as is well known, $H$ is very ample unless $H = K_X+P+Q$ for two points $P,Q \in X$. But then $M=P+Q$ is effective, a contradiction. Thus the claim is proved and, since $H=K_X+M$, we have $H^i(T_X(1))=H^i(M)=0$ for $i \ge 0$, so that $T_X(2)$ is Ulrich.
\end{proof}

\begin{example}
\label{k=3curve}
The case $k=3$ occurs on any curve $X$ with (necessarily) odd genus $g \ge 9$. This was suggested to us by E. Sernesi, whom we thank.
\end{example}
\begin{proof}
Let $d=\frac{3(g-1)}{2}$. We claim that a general $H \in \Pic^d(X)$ is very ample. In fact, first observe that $H^1(H)=0$, for otherwise $K_X-H \ge 0$. But $K_X-H$ is a general line bundle of degree $\frac{g-1}{2} \le g-1$, hence $h^0(K_X-H)=0$. Now, if $H$ were not very ample, there will be two points $p, q \in X$ such that $h^0(H-p-q) \ge h^0(H)-1$. But this can be rewritten, by Riemann-Roch, as $h^1(H-p-q) \ge 1$, that is $K_X-H+p+q \ge 0$. Hence there are some points $p_1, \ldots, p_{\frac{g+3}{2}} \in X$ such that  
$$K_X-H+p+q \sim p_1+ \ldots + p_{\frac{g+3}{2}}$$
that is
$$H \sim K_X-p_1- \ldots - p_{\frac{g+3}{2}}+p+q.$$
This means that $H$ is in the image of the morphism $h: X^{\frac{g+7}{2}} \to \Pic^d(X)$ sending $(p_1,\ldots, p_{\frac{g+3}{2}},p,q)$ to $K_X-p_1- \ldots - p_{\frac{g+3}{2}}+p+q$. But $\dim \Im h \le \frac{g+7}{2} < g$, contradicting that $H$ is general. This proves that there is a non-empty open subset $W$ of $\Pic^d(X)$ such that any $H \in W$ is very ample.

Consider the surjective morphism $\psi: \Pic^d(X) \to \Pic^{3g-3}(X)$ given by $\psi(L)=2L$ and the isomorphism $\varphi: \Pic^{3g-3}(X) \to \Pic^{g-1}(X)$ given by $\varphi(L)=L-K_X$. Let $U$ be the non-empty open subset of $\Pic^{g-1}(X)$ such that $H^i(M)=0$ for $i \ge 0$ for any $M \in U$. Now let $H \in \psi^{-1}(\varphi^{-1}(U)) \cap V \cap W$. Then $H$ is very ample and $2H = K_X+M$. In the embedding given by $H$ we have that $H^i(T_X(2))=H^i(-K_X+2H)=H^i(M)=0$ for $i \ge 0$, hence $T_X(3)$ is Ulrich.
\end{proof}

\begin{remark}
If $k \ge 1$ and $g-1$ is a prime number, then $k \in \{2,4\}$.
\end{remark}
\begin{proof}
By Lemma \ref{posi}(i) we get that $(k-1)d=3(g-1)$ and $g \ge 2$, hence $d \ge 4$. If $3$ does not divide $k-1$ we get that $3$ divides $d$ and $(k-1)\frac{d}{3}=g-1$, so that $k=2$. If $3$ divides $k-1$ we get that $\frac{k-1}{3}d=g-1$, so that $k=4$. 
\end{proof}

\begin{example}
Every odd $k \ge 3$ occurs.
\end{example}
\begin{proof}
Let $E$ be an elliptic curve, let $D$ be a divisor of degree $3$ on $E$ and let $S = E \times \P^1$ with two projections $\pi_1 : S \to E, \pi_2 : S \to \P^1$. Set $C_0 = \pi_2^*(\O_{\P^1}(1))$. Then $H = C_0 + \pi_1^*D$ is very ample on $S$. Let $M \in \Pic^0(E)$ be not $2$-torsion and let $B = \frac{k-1}{2}D + M$. Again $H_1:=(k+2)C_0 + \pi_1^*B$ is very ample on $S$. Set 
$$\L = -K_S + (k-1)H = (k+1)C_0 + \pi_1^*(2B-2M)$$
so that
$$\L-H_1 = -C_0 + \pi_1^*(B-2M)$$
while
$$\L-2H_1 = -(k+3)C_0 + \pi_1^*(-2M)$$
and it is easily seen by the K\"unneth formula, that $H^i(\L-pH_1)=0$ for $i \ge 0, 1 \le p \le 2$. Hence, if $X \in |H_1|$ is a smooth irreducible curve, the exact sequence
$$0 \to \L - 2H_1 \to \L-H_1 \to T_X(k-1) \to 0$$
shows that $H^i(T_X(k-1))=0$ for $i \ge 0$, that is $T_X(k)$ is an Ulrich line bundle.
\end{proof}

We now give a bound for $k$.

We first analyze a special case. We use the notation $(a;b_1,b_2,b_3,b_4,b_5,b_6) \in \Z^7$ for the divisor $a\varepsilon^*L -\sum_{i=1}^6 b_iE_i$ on a smooth cubic $W \subset \P^3$, where $\varepsilon : W \to \P^2$ is the blow up in six points, no three collinear and not on a conic, with exceptional divisors $E_i$ and $L$ is a line in $\P^2$.

\begin{lemma}
\label{632}
Let $X \subset \P^3$ be a smooth irreducible curve of genus $3$ and degree $6$ lying on a smooth cubic $W$. Then $T_X(2)$ is Ulrich if and only if $X$ is linearly equivalent to one of the following divisors on $W$:
\begin{equation}
\label{cla}
(4;1,1,1,1,1,1),(5;2,2,2,1,1,1),(6;3,2,2,2,2,1),(7;3,3,3,2,2,2),(8;3,3,3,3,3,3).
\end{equation}
\end{lemma}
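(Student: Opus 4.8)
The plan is to reduce the Ulrich condition for $T_X(2)$ on such a curve $X$ to a simple cohomological condition on the line bundle $M := H - K_X$, via Lemma \ref{k=2curve}: namely $T_X(2)$ is Ulrich if and only if $H^i(M) = 0$ for $i \ge 0$, i.e. $h^0(M) = 0$ (equivalently, since $\deg M = g - 1 = 2$ by Lemma \ref{posi}(i) computing $d = 6$, $g = 3$, $k = 2$ and Riemann--Roch, the condition is simply that $M$ is not effective). So the whole statement becomes: classify the divisor classes $X$ on the smooth cubic surface $W$ such that $X$ is a smooth irreducible curve of genus $3$ and degree $6$ in $\P^3$ (the latter meaning $X \cdot (-K_W) = 6$, since $H_{|W} = -K_W$) and such that $(H - K_X)|_X = (X + K_W + X)|_X$... more precisely $M = H|_X - K_X$; using adjunction $K_X = (K_W + X)|_X$ and $H|_X = (-K_W)|_X$, we get $M = (-2K_W - X)|_X$. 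The condition $h^0(X, M) = 0$ will be analyzed on $W$ using the standard exact sequence $0 \to \cO_W(-2K_W - 2X) \to \cO_W(-2K_W - X) \to M \to 0$ together with vanishing/computation of cohomology of line bundles on the Del Pezzo surface $W$ (degree $3$, so $W$ is $\P^2$ blown up at $6$ general points, intersection form the standard diagonal $(1,-1,\dots,-1)$ and $-K_W = (3;1,1,1,1,1,1)$).

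The key steps, in order: (1) Parametrize candidate classes $X = (a; b_1, \dots, b_6)$ on $W$ subject to $X \cdot (-K_W) = 3a - \sum b_i = 6$ and $p_a(X) = 1 + \tfrac12(X^2 + X \cdot K_W) = 3$, i.e. $X^2 = a^2 - \sum b_i^2 = 6$; together with the requirement that $X$ be very ample (nef and big with the right degree) so as to contain smooth irreducible representatives — this is where one invokes the classical theory of linear systems on cubic surfaces (the $27$ lines, the effective/nef cone, Bertini). Solving the two Diophantine equations $3a - \sum b_i = 6$, $a^2 - \sum b_i^2 = 6$ over the $b_i$ up to the Weyl group $W(E_6)$ action gives finitely many orbits; I expect exactly the five listed in \eqref{cla}. (2) For each such class, compute $\deg M = K_X \cdot \text{(something)}$... actually $\deg M = g - 1 = 2$ automatically, so instead one must directly check $h^0(M) = 0$. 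Here I would push the computation onto $W$: $M$ effective on $X$ forces, via the sequence above and $H^1(W, -2K_W - 2X) = 0$ (to be verified by Riemann--Roch + Kawamata--Viehweg or direct line-bundle cohomology on the Del Pezzo), either $-2K_W - X$ or $-2K_W - 2X$ to be effective on $W$; analyze both using the nef cone of $W$ and the fact that $-K_W$ is ample. (3) Conversely, for the five classes, verify $-2K_W - X$ (and hence its restriction) has no sections, or more cleanly verify directly that $M$ is a non-effective degree-$2$ line bundle on the genus-$3$ curve $X$ — perhaps by symmetry/explicit identification (e.g. for $(4;1,1,1,1,1,1)$, $X$ is a canonical-type curve and $M$ is visibly non-special-looking).

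I expect the main obstacle to be step (1)–(2) interface: showing that the Diophantine/Weyl-orbit analysis yields \emph{exactly} those five classes \emph{and no others satisfying all of (irreducible, smooth, very ample on $W$, the cohomological condition on $M$)} — in particular ruling out classes that solve the numerical equations but fail very ampleness, and conversely making sure no very ample class with $h^0(M) > 0$ sneaks in. Managing the $W(E_6)$ symmetry cleanly (reducing to $b_1 \ge b_2 \ge \cdots \ge b_6 \ge 0$ after applying Cremona transformations, and bounding $a$) is the combinatorial heart of the argument. A secondary technical point is the vanishing $H^1(W, -2K_W - 2X) = 0$ and the nonexistence of sections of $-2K_W - 2X$ on $W$ for the relevant classes: since $-2K_W - 2X$ will typically have negative intersection with $-K_W$ (as $(-2K_W - 2X)\cdot(-K_W) = 2 \cdot 3 - 2 \cdot 6 = -6 < 0$), it is not effective, so the section of $M$, if any, must come from $H^0(W, -2K_W - X)$ modulo $H^0(W, -2K_W - 2X) = 0$ — i.e. $h^0(M) = h^0(W, -2K_W - X)$, reducing everything to computing $h^0$ of a single line bundle on the cubic surface, which one does via Riemann--Roch on $W$ ($\chi = 1 + \tfrac12 D\cdot(D - K_W)$) plus a vanishing of $h^1, h^2$, the latter by Serre duality and ampleness of $-K_W$.
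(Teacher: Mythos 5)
Your overall strategy --- reduce via Lemma \ref{k=2curve} to the non-effectivity of $M=H_{|X}-K_X=(-2K_W-X)_{|X}$, push the computation onto $W$ via the restriction sequence $0 \to \O_W(-2K_W-2X) \to \O_W(-2K_W-X) \to M \to 0$, and enumerate the numerical classes --- is essentially the paper's. But there are two genuine problems. First, a numerical slip that would derail the enumeration: adjunction gives $4=2g-2=X^2+K_W\cdot X=X^2-6$, so $X^2=a^2-\sum b_i^2=10$, not $6$; solving $3a-\sum b_i=6$ together with $a^2-\sum b_i^2=6$ would produce a different (and wrong) list of classes. (Also, since the hypothesis already places a smooth irreducible $X$ on $W$, you do not need very-ampleness of the class nor a $W(E_6)$-orbit analysis; ordering $b_1\ge\cdots\ge b_6\ge 0$ and bounding $a$ by Cauchy--Schwarz suffices, as in Lemma \ref{632num}.)

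Second, and more seriously, your cohomological reduction is circular. The claim $h^0(M)=h^0(W,-2K_W-X)$ requires $H^1(W,-2K_W-2X)=0$. But Riemann--Roch gives $\chi(-2K_W-2X)=0$, and $(-2K_W-2X)\cdot(-K_W)=-6<0$ gives $h^0(-2K_W-2X)=0$, so by Serre duality $h^1(-2K_W-2X)=h^2(-2K_W-2X)=h^0(3K_W+2X)$ --- and the vanishing of $h^0(3K_W+2X)$ is precisely the condition equivalent to $T_X(2)$ being Ulrich. Indeed, using $h^0(-2K_W-X)=h^1(-2K_W-X)=0$ (which do hold: $(-2K_W-X)\cdot(-K_W)=0$ with $X\not\sim -2K_W$ kills $h^0$, $(3K_W+X)\cdot(-K_W)=-3<0$ kills $h^2$, and $\chi(-2K_W-X)=0$ then kills $h^1$), the long exact sequence gives $H^0(M)\cong H^1(-2K_W-2X)\cong H^0(3K_W+2X)$. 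Kawamata--Viehweg does not apply, since $-2K_W-2X-K_W=-3K_W-2X$ has degree $-3$ against the ample $-K_W$ and so is not nef, and $\chi(3K_W+2X)=0$, so Riemann--Roch gives nothing. The actual heart of the lemma is therefore the case-by-case verification that $3K_W+2X$ is not effective for each of the five classes, which the paper carries out by explicit descent arguments on $W$ using that the six blown-up points have no three collinear and do not lie on a conic; your proposal leaves this step entirely unaddressed.
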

\begin{proof}
Let $D =-K_W$. We have that $D \cdot X=6$ and $X^2=10$. Thus Riemann-Roch gives that $\chi(2D-X)=0$. Further, $D(2D-X)=0$, whence $H^0(2D-X)=0$, for otherwise $X \sim 2D$ and then $X^2=12$, a contradiction. Also, $D(-3D+X)=-3$, whence $H^0(-3D+X)=0$, which, be Serre's duality, is $H^2(2D-X)=0$. Thus, also $H^1(2D-X)=0$. Now the exact sequence
$$0\to 2D-2X\to 2D-X \to T_X(1) \to 0$$
gives that 
$$h^1(T_X(1)) = h^2(2D-2X)=h^0(3K_W+2X)$$ 
by Serre's duality. Since $\deg T_X(1) = 2$, we deduce by Riemann-Roch, that $T_X(2)$ is Ulrich if and only if $H^1(T_X(1))=0$, hence
\begin{equation}
\label{ridu}
T_X(2) \ \hbox{is Ulrich if and only if} \ H^0(3K_W+2X)=0.
\end{equation}
Let $(a;b_1,\cdots,b_6)$ with $b_1 \ge b_2 \ge \cdots \ge b_6 \ge 0$ be the class of $X$. It follows from the assumption on degree and genus that \eqref{0.1.1} holds, whence $X$ is as in \eqref{cla} by Lemma \ref{632num}. Using \eqref{ridu}, it remains to show that $H^0(3K_W+2X)=0$ in all of these cases. 

In case $(4;1,1,1,1,1,1)$, we have that $3K_W+2X=(-1;1,1,1,1,1,1)$ is clearly not effective.

In case $(5;2,2,2,1,1,1)$, we have that $3K_W+2X=(1;-1,-1,-1,1,1,1)$. If it were effective, then so would be $(1;0,0,0,1,1,1)$, a contradiction since no three blown-up points are collinear.

In case $(6;3,2,2,2,2,1)$,  we have that $3K_W+2X=(3;3,1,1,1,1,-1)$. Assume it is effective. Intersecting with $(1;1,1,0,0,0,0)$ we see that $(2;2,0,1,1,1,-1)$ must be effective. Intersecting the latter with $(1;1,0,1,0,0,0)$ we conclude that $(1;1,0,0,1,1,-1)$ must be effective, hence also $(1;1,0,0,1,1,0)$, a contradiction since no three blown-up points are collinear.

In case $(7;3,3,3,2,2,2)$,  we have that $3K_W+2X=(5;3,3,3,1,1,1)$. Assume it is effective. Intersecting with $(1;1,1,0,0,0,0)$ we see that $(4;2,2,3,1,1,1)$ must be effective. Intersecting the latter with $(1;1,0,1,0,0,0)$ we conclude that $(3;1,2,2,1,1,1)$ must be effective. Finally, intersecting with $(1;0,1,1,0,0,0)$ we conclude that $(2;1,1,1,1,1,1)$ must be effective, a contradiction since the blown-up points do not lie on a conic.

In case $(8;3,3,3,3,3,3)$,  we have that $3K_W+2X=(7;3,3,3,3,3,3)$. Observe that $D(3K_W+2X)=3$. Thus, if $3K_W+2X$ were effective, it would contain a divisor $\Gamma$ that is either irreducible, or is a union of three lines, or is a union of a line and a conic. Now $p_a(\Gamma)=-3$, hence $\Gamma$ is not irreducible. On the other hand, since the first coefficient of a line in $W$ is at most $2$ and of a conic at most $3$, we see that $\Gamma$, whose first coefficient is $7$, is not a union of three lines nor of a line and a conic. This contradiction shows that $3K_W+2X$ is not effective.
\end{proof}

Now we give the sharp bound.

\renewcommand{\proofname}{Proof of Theorem \ref{gen}}
\begin{proof}
First assume that $g=0$. Then Lemma \ref{posi}(i) gives that $k \le 0$, that is the required bound, and if equality holds, then Theorem \ref{k=0} shows that $X$ is a curve of type $(1,2)$ on a smooth quadric.

Thus, by Lemma \ref{posi}(i), we can now assume that $g \ge 2$.

Observe that $h^0(H) \ge 4$. In fact, the only possibility remaining is that $h^0(H) =3$. But then $K_X=(d-3)H$, $g=\binom{d-1}{2}$ and $k= \frac{3(d-3)}{2}+1$ by Lemma \ref{posi}(i). Now $0 = H^0(T_X(k-1))=H^0((-d+2+k)H)$ and therefore $-d+2+k \le -1$, giving  the contradiction $d \le 1$. 

Now, if $X$ has general moduli, since it has a $g^3_d$, the Brill-Noether theorem implies that $\rho(g,3,d) \ge 0$, that is $d \ge \frac{3g+12}{4}$. By Lemma \ref{posi}(i) we get that $\frac{3(g-1)}{k-1} \ge \frac{3g+12}{4}$, that gives $k \le 4$. This proves the last assertion of the theorem. 

Turning to the first assertion, let $X \subseteq \P^N$ be a smooth irreducible curve of genus $g \ge 2$ such that $T_X(k)$ is an Ulrich line bundle and assume that 
$$k \ge \frac{\sqrt{8g+1}-1}{2}.$$ 
Using Lemma \ref{posi}(i), the above inequality can be rephrased as 
\begin{equation}
\label{bd}
g \ge \frac{2}{9}d^2-d+1.
\end{equation}
Consider a general linear projection $X'$ of $X$ to $\P^3$. Note that $X' \cong X$, hence $T_{X'}(k)$ is Ulrich. We first observe that $X'$ cannot be a complete intersection (hence, in particular, $X'$ is nondegenerate), for otherwise $T_{X'}(k)=lH$ for some $l \in \Z$. Now $T_{X'}(k)$, being Ulrich, is globally generated by Lemma \ref{ulr}(vi), hence $l \ge 0$. Also $0=H^0(T_{X'}(k-1))=H^0((l-1)H)$ and therefore $l=0$. Hence Lemma \ref{ulr}(vii) gives that $d=h^0(T_{X'}(k))=1$, a contradiction.

Using Lemma \ref{posi}(i) and Castelnuovo's bound, we get that either $(d,g,k)=(6,3,2)$ or $d \ge 7$. 

Suppose that $d \ge 7$. 

We aim to show that $X'$ must lie on a smooth quadric. 

To this end, observe that \eqref{bd} and Lemma \ref{posi}(i) imply that 
\begin{equation}
\label{cub}
g >  \begin{cases} \frac{1}{6}d(d-3)+1 & {\rm if} \ d \equiv 0 \ ({\rm mod} \ 3) \\ \frac{1}{6}d(d-3)+\frac{1}{3} &{\rm if} \ d \equiv 1, 2 \ ({\rm mod} \ 3)
\end{cases}
\end{equation} 
unless $d=9$ and $g=10$. But in the latter case it is easy to show that if $X'$ does not lie on a quadric, then it is a complete intersection of two cubics, a contradiction. Therefore \eqref{cub} and \cite[Thm.~3.2]{ha2} give that $X'$ lies on a quadric $Q$. Moreover $Q$ is smooth, for otherwise it must be a cone, $d=2b+1$ is odd and $g=b^2-b$ by \cite[Ex.~V.2.9]{ha1}. But then Lemma \ref{posi}(i) gives that $4(k-1)=6b-9-\frac{3}{2b+1}$, and therefore $b=1$, a contradiction.

Thus $X'$ is a curve of type $(a,b)$ on $Q$, with $2 \le a \le b$. In particular $X'$ is linearly normal, hence $X=X'$. In the exact sequence
$$0 \to \O_Q(k+1-2a,k+1-2b) \to \O_Q(k+1-a,k+1-b) \to T_X(k-1) \to 0$$
since $H^0(T_X(k-1))=0$, we get that 
$$H^0(\O_Q(k+1-2a,k+1-2b))= H^0(\O_Q(k+1-a,k+1-b))$$ 
hence $k+1-b \le -1$, for otherwise $k+1-a \ge k+1-b \ge 0$, but then $X$ is a base-component of $|\O_Q(k+1-a,k+1-b)|$, contradicting the fact that this linear system is base-point-free. Therefore $b \ge k+2$. Moreover Lemma \ref{posi}(i) can be rewritten now as
$$(a+b)(k-1)=3((a-1)(b-1)-1)$$
that is
$$a = \frac{b(k+2)}{3b-k-2}$$
and it is readily seen that $b \ge k+2$ is equivalent to $a \le \frac{k}{2}+1$. Therefore $b \ge 2a$. But the maximum genus of a curve of type $(a, b)$ with $b \ge 2a$ and degree $d$ is attained when $b=\frac{2}{3}d$. Therefore
$$g \le (\frac{1}{3}d-1)(\frac{2}{3}d-1)=\frac{2}{9}d^2-d+1.$$
This shows that the inequality in \eqref{bd} cannot be strict, and therefore $g \le \frac{2}{9}d^2-d+1$, which is equivalent to \eqref{bdk}. Moreover, if equality holds in \eqref{bdk}, then it holds in \eqref{bd} and therefore $X$ is a curve of type $(a,b)$ with $b=\frac{2}{3}d$, hence $b=2a$ and $2a=b\ge k+2\ge 2a$, so that $k$ is even, $a= \frac{k}{2}+1$ and $b=k+2$. 

Next consider the only remaining case, $(d,g,k)=(6,3,2)$. 

Again $X'$ is linearly normal, hence $X=X'$. Also we have equality in \eqref{bd} and if $X$ lies on a quadric, then it must be of type $(2,4)$ and we are done in this case. Suppose therefore that $X$ does not lie on a quadric. Then it is easily seen that $\I_{X/\P^3}(3)$ is $0$-regular, hence globally generated, and we get that $X$ is contained in a smooth cubic. Therefore $X$ is one of the curves \eqref{cla} by Lemma \ref{632} and $T_X(2)$ is Ulrich.

Finally, to show that the bound \eqref{bdk} is sharp for every even $k \ge 0$, let $X$ be a curve of type $(\frac{k}{2}+1,k+2)$ on a smooth quadric $Q \subset \P^3$, so that $k = \frac{\sqrt{8g+1}-1}{2}$. It remains to show that $T_X(k)$ is Ulrich. Set $k=2c$. We have 
$$T_X(k-1)=-K_X+(k-1)H=\O_Q(c,-1)_{|X}$$ 
and the exact sequence
$$0 \to \O_Q(-1,-2c-3) \to \O_Q(c,-1) \to \O_Q(c,-1)_{|X} \to 0$$
shows that $H^i(\O_Q(c,-1)_{|X})=0$ for $i \ge 0$, since $H^i(\O_Q(c,-1))=H^i(\O_Q(-1,-2c-3))=0$ for $i \ge 0$. Hence $T_X(k)$ is Ulrich.
\end{proof}
\renewcommand{\proofname}{Proof}

\section{Surfaces}

Throughout this section we will have that $X \subseteq \P^N$ is a smooth irreducible surface.

We start by a characterization.

\begin{lemma}
\label{sup1}

Let $X \subseteq \P^N$ be a smooth irreducible surface. Then $T_X(k)$ is an Ulrich vector bundle if and only if
\begin{itemize}  
\item[(i)] $d= \frac{4(g-1)}{2k-1}$
\item[(ii)] $HK_X = \frac{(2k-3)d}{2}$.
\item[(iii)] $K_X^2 = 5 \chi(\O_X) + \frac{(k-1)(k-2)d}{2}$.
\item[(iv)] $H^0(T_X(k-1))=0$.
\item[(v)] $H^2(T_X(k-2))=0$.
\end{itemize} 
\end{lemma}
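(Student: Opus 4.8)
The plan is to prove Lemma \ref{sup1} by translating the defining cohomological vanishing conditions for the Ulrich property of $T_X(k)$ into the stated numerical identities (i)--(iii) together with the remaining vanishings (iv)--(v). Since $X$ is a surface ($n=2$), being Ulrich for $T_X(k)$ means $H^i(T_X(k-p))=0$ for all $i\ge 0$ and $p=1,2$, i.e. $H^i(T_X(k-1))=0$ and $H^i(T_X(k-2))=0$ for $i=0,1,2$. The forward implication is then essentially a matter of extracting the numerical consequences already available in Lemma \ref{posi} and Lemma \ref{ulr}: condition (i) is exactly Lemma \ref{posi}(i) specialized to $n=2$; condition (ii) is Lemma \ref{posi}(ii) with $n=2$, since $K_XH=\frac{2(2k-3)}{4}d=\frac{(2k-3)d}{2}$; and condition (iii) should follow from combining Lemma \ref{posi}(vii) (the Chern class identity) with Noether's formula $c_2(X)+K_X^2=12\chi(\O_X)$ to eliminate $c_2(X)$, after substituting $n=2$ and $K_XH=\frac{(2k-3)d}{2}$. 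Conditions (iv) and (v) are simply part of the definition of Ulrich (the vanishing of $h^0$ and $h^2$ of the relevant twists), so the forward direction requires no real work beyond bookkeeping.

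For the reverse implication, I would assume (i)--(v) and show $T_X(k)$ is Ulrich, i.e. that $H^i(T_X(k-1))=H^i(T_X(k-2))=0$ for all $i$. Conditions (iv) and (v) already give $H^0(T_X(k-1))=0$ and $H^2(T_X(k-2))=0$. For the remaining six cohomology groups I would use that the rank-$2$ bundle $\E=T_X(k)$ satisfies $\chi(\E(-1))=\chi(\E(-2))=0$: this is where the numerical conditions enter, because by Riemann--Roch on a surface, $\chi(\E(-p))$ is a polynomial in the intersection numbers $d$, $K_XH$, $K_X^2$, $c_2(X)$, $c_1(\E)$, $c_2(\E)$, and conditions (i)--(iii) (plus Noether) are precisely what force these two Euler characteristics to vanish. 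Then I would still need to kill individual cohomology groups rather than just alternating sums. The key additional input is Serre duality: $H^2(T_X(k-2))^\vee=H^0(\Omega^1_X(K_X-(k-2)H))$ and $H^2(T_X(k-1))^\vee=H^0(\Omega^1_X(K_X-(k-1)H))$, and since $\Omega^1_X(K_X+(3-k)H)$ is the Ulrich dual bundle (Lemma \ref{ulr}(v)) one can play the two twists against each other. Concretely: from $H^0(T_X(k-1))=0$ and $H^2(T_X(k-1))=H^0(\Omega^1_X(K_X+(2-k)H))^\vee$, I want to show the latter vanishes, and similarly $H^2(T_X(k-2))=0$ is given; with the two Euler characteristics zero, vanishing of $h^0$ and $h^2$ forces $h^1=0$ in each case.

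The main obstacle I anticipate is the reverse direction's control of $H^2(T_X(k-1))$ and of the $h^1$'s: knowing $\chi(\E(-p))=0$ together with $h^0=h^2=0$ gives $h^1=0$, so the real crux is showing $H^2(T_X(k-1))=H^0(\Omega^1_X(K_X+(2-k)H))^\vee=0$ and $H^0(T_X(k-2))=0$ from the hypotheses. For the first, I would argue that $\Omega^1_X(K_X+(3-k)H)$ is globally generated once we know it is Ulrich — but we are trying to prove it is Ulrich, so instead I would run the argument the other way: use the exact sequences and (iv)--(v) directly. A cleaner route is to observe that $H^0(T_X(k-2))\subseteq H^0(T_X(k-1))=0$ when $H$ is very ample (tensoring the inclusion $\O_X(-1)\hookrightarrow\O_X$... actually the natural map goes $T_X(k-2)\to T_X(k-1)$ via a section of $H$, which is injective on a surface since $T_X$ is torsion-free), giving (iv) $\Rightarrow H^0(T_X(k-2))=0$ for free. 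Dually, $H^2(T_X(k-1))$ receives a surjection-or-injection from $H^2(T_X(k-2))=0$ via the same section sequence $0\to T_X(k-2)\to T_X(k-1)\to T_X(k-1)|_{C}\to 0$ together with $H^1(T_X(k-1)|_C)$ — so one reduces to a statement on the curve section, where the conditions (i)--(ii) pin down the relevant degree. I expect that carefully chaining these restriction sequences to the general curve section $C$, where everything becomes a line-bundle degree computation controlled by (i) and (ii), is the part requiring the most care, though it is ultimately routine; the Euler-characteristic computation eliminating $c_2(X)$ via Noether's formula is straightforward algebra.
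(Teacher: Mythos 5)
Your proposal is correct and is in substance the same argument as the paper's: the paper simply observes that (i) and (ii) are equivalent via $HK_X=2(g-1)-d$ and then cites \cite[Prop.~2.1]{c}, whose content is precisely the criterion you re-derive (conditions (ii)--(iii) are equivalent to $\chi(T_X(k-1))=\chi(T_X(k-2))=0$ by Riemann--Roch and Noether's formula, and together with $h^0(T_X(k-1))=h^2(T_X(k-2))=0$ and the restriction sequence to a hyperplane curve they force all remaining cohomology to vanish). The one wrinkle in your write-up — the momentary detour through global generation of the Ulrich dual — is correctly abandoned in favor of the monotonicity argument $H^0(T_X(k-2))\hookrightarrow H^0(T_X(k-1))$ and the surjection $H^2(T_X(k-2))\twoheadrightarrow H^2(T_X(k-1))$, which is exactly how the cited criterion is proved.
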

\begin{proof}
Note that (i) and (ii) are equivalent, since $HK_X=2(g-1)-d$. Now (ii) and (iii) are the conditions (2.2) in \cite[Prop.~2.1]{c}. Hence the lemma follows by loc. cit.
\end{proof}

Now we show the possible cases.

\begin{prop}
\label{sup}

Let $X \subseteq \P^N$ be a smooth irreducible surface. If $T_X(k)$ is an Ulrich vector bundle, the following hold:
\begin{itemize} 
\item[(i)] $0 \le k \le 3$. 
\end{itemize} 
Moreover, either
\begin{itemize} 
\item[(ii)] $k=0$ and $(X,H)=(\P^2,\O_{\P^2}(2))$, or
\item[(iii)] $k=1$ and $X$ is a Del Pezzo surface of degree $5$, or
\item[(iv)] $k=2, q=0$ and $X$ is a minimal surface of general type, or 
\item[(v)] $k=3$, $X$ is a minimal surface of general type with $2K_X \equiv 3H, K_X^2=\frac{9d}{4}, \chi(\O_X)=\frac{d}{4}$. Moreover $X$ is a ball quotient. 
\end{itemize} 
\end{prop}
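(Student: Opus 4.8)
The strategy is to run the numerical constraints of Sections 4--5 (mainly Lemma \ref{posi}, Lemma \ref{coh}, Lemma \ref{sup1} and Theorem \ref{bou}) through the Enriques--Kodaira classification of surfaces. First, by Lemma \ref{coh}(i) we have $k \ge 0$ (since $X$ is not $\P^1$), and Theorem \ref{bou} with $n=2$ gives $k \le 3$; this yields (i). Now I would separate into the four values $k \in \{0,1,2,3\}$.

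For $k=0$: Theorem \ref{k=0} immediately gives $(X,H)=(\P^2,\O_{\P^2}(2))$, which is (ii). For $k=1$: this is Theorem \ref{k=1} (equivalently Theorem \ref{k=1sup} referenced in the excerpt), giving (iii). So the real work is in the cases $k=2$ and $k=3$, where I would argue that $X$ must be a minimal surface of general type. For $k=2$: Lemma \ref{posi}(ii) gives $HK_X = \frac{1}{2}d > 0$, so $K_X$ is not numerically trivial and $X$ is neither rational nor ruled over a curve of positive genus in a way that keeps $K_X$ non-positive on $H$; more precisely $HK_X>0$ rules out $X$ being a Del Pezzo-type or conic-bundle situation restricted to $H$. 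Lemma \ref{coh}(vii) (with $k=2 \le n-1$ failing — note $n-1=1<2$, so this does not directly apply) — instead I use Lemma \ref{coh}(v): $H^0(K_X+(n-k-1)H)=H^0(K_X-H)=0$ and $H^0(K_X+(n-k-2)H)=H^0(K_X-2H)=0$, and Lemma \ref{coh}(iv): $p_g=0$ is \emph{not} forced here since $k=2=n$, so I must instead show $K_X$ is nef. For this, combine Lemma \ref{gg}(iii): $\tau(X,H) \le n - \frac{nk}{n+1} = 2 - \frac{4}{3} = \frac{2}{3} < 1$, so $K_X + H$ is nef; together with Lemma \ref{gg}(ii): $\tau(X,H) \ge \frac{n(n+1-2k)}{n+2} = \frac{2(3-4)}{4} = -\frac{1}{2}$. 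Since $HK_X>0$, $X$ is not a ruled/rational surface, and the only remaining Enriques--Kodaira classes with $HK_X>0$ for some polarization and $p_g$ possibly zero and satisfying $q$ constraints are those of general type or properly elliptic or bielliptic; the latter have $K_X \equiv 0$ (bielliptic, ruled out by Proposition \ref{canzero}(i)) or are excluded by the Noether-type equality of Lemma \ref{sup1}(iii): $K_X^2 = 5\chi(\O_X) + \frac{(k-1)(k-2)d}{2}$, which for $k=2$ reads $K_X^2 = 5\chi(\O_X)$, forcing (since $K_X^2 \ge 0$ and, for non-general-type minimal surfaces with $K_X$ nef one has $K_X^2=0$, hence $\chi(\O_X)=0$, hence properly elliptic or bielliptic or Enriques/K3 — each eliminated by $HK_X>0$ combined with $K_X^2=0$ and the structure of such surfaces) that $X$ is of general type. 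Minimality then follows because $K_X$ is nef (no $(-1)$-curves). Finally $q(X)=0$ by Lemma \ref{coh}(vi): if $q \ne 0$ then $H^0(K_X+(n-k)H)=H^0(K_X)=0$, i.e. $p_g=0$; but then $\chi(\O_X) = 1 - q \le 0$ while $K_X^2 = 5\chi(\O_X) \le 0$ with $K_X^2>0$ for general type — contradiction. Hence $q=0$, giving (iv).

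For $k=3$: Lemma \ref{posi}(ii) gives $HK_X = \frac{3d}{2}$ and $2HK_X = 3d = 3H^2$, so by the Hodge index theorem (equation \eqref{hit}: $dK_X^2 \le (HK_X)^2 = \frac{9d^2}{4}$, i.e. $K_X^2 \le \frac{9d}{4}$) combined with Lemma \ref{sup1}(iii) for $k=3$: $K_X^2 = 5\chi(\O_X) + d$; meanwhile Lemma \ref{posi}(vii) with $n=2$ gives a relation between $d$, $K_X^2$ and $c_2(X)$ that, together with $\chi(\O_X) = \frac{1}{12}(K_X^2 + c_2(X))$, pins down $\chi(\O_X) = \frac{d}{4}$ and $K_X^2 = \frac{9d}{4}$, so equality holds in Hodge index, forcing $2K_X \equiv 3H$ (proportionality in the equality case of Hodge index, since $K_X$ and $H$ then span a rank-$1$ sublattice). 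From $K_X^2 = \frac{9d}{4} > 0$ and $K_X$ numerically proportional to the ample $H$, $K_X$ is ample, so $X$ is a minimal surface of general type. Then $c_2(X) = 12\chi(\O_X) - K_X^2 = 3d - \frac{9d}{4} = \frac{3d}{4}$, and $K_X^2 = 3 c_2(X)$, which is the Bogomolov--Miyaoka--Yau equality case; by Yau's theorem $X$ is a ball quotient. This gives (v). \textbf{The main obstacle} I anticipate is the $k=2$ case: unlike $k=3$, the numerical data do not pin down $K_X^2$ sharply, so ruling out properly elliptic surfaces (and other $\kappa \le 1$ cases) requires carefully combining $HK_X>0$, $p_g=0$ or the $\chi$-relation, and the nefness of $K_X$ coming from $\tau(X,H)<1$ — the delicate point being to exclude minimal properly elliptic surfaces, which can have $K_X$ nef, $K_X^2=0$, $p_g$ arbitrary; here $K_X^2 = 5\chi(\O_X) = 0$ forces $\chi(\O_X)=0$, and then a properly elliptic surface with $\chi=0$ is either a ruled surface over an elliptic curve (excluded, $\kappa=-\infty$) or has $q=1, p_g=0$; but then Lemma \ref{posi}(iii) does not apply ($k=2 > \frac{3}{2}$) — instead one uses that such a surface has $K_X \equiv$ (a positive rational multiple of a fiber $f$), $Hf > 0$, and checks against the precise divisibility forced by Lemma \ref{posi}(i), $d=\frac{4(g-1)}{3}$, to derive a contradiction; I would expect to need a short case analysis here.
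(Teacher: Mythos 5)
Your outline of (i), the reduction of $k=0,1$ to earlier theorems, and the numerical endgame for $k=3$ (Hodge index $+$ BMY forcing $\chi(\O_X)=\tfrac{d}{4}$, $K_X^2=9\chi(\O_X)$, hence a ball quotient with $2K_X\equiv 3H$) match the paper in spirit, and your argument for $q=0$ in case (iv) via Lemma \ref{coh}(vi) is a valid (and arguably cleaner) alternative to the paper's. But there is one genuinely missing idea that your Enriques--Kodaira strategy cannot replace: the paper first observes that $H^1(T_X)=0$ (Lemma \ref{coh}(iv), i.e.\ $X$ is infinitesimally rigid, a direct consequence of the aCM property of Ulrich bundles) and then invokes the Bauer--Catanese classification of rigid surfaces \cite[Thm.~1.3]{bc} to conclude at the outset that $X$ is either a minimal surface of general type or a Del Pezzo surface of degree $\ge 5$. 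Everything else in the proof hangs on this dichotomy. You never establish minimality or general type: your claimed nefness of $K_X$ does not follow from Lemma \ref{gg}(iii), since $\tau(X,H)\le \tfrac{2}{3}$ only says $K_X+\tfrac{2}{3}H$ is nef, and a $(-1)$-curve $E$ with $H\cdot E\ge 2$ is perfectly compatible with that bound (Lemma \ref{noexc} only excludes $(-1)$-\emph{lines}). Consequently your step ``minimality follows because $K_X$ is nef'' is unsupported, your exclusion of non-minimal general type surfaces and of properly elliptic surfaces is incomplete (as you yourself flag), and your use of BMY in the $k=3$ case is circular: $K_X^2\le 9\chi(\O_X)$ is only available once you already know $X$ is minimal of general type (or at least has $K_X$ nef), yet you use its output to conclude that very fact.

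A second, smaller circularity: for (iii) you cite Theorem \ref{k=1sup}, but in the paper that theorem is \emph{deduced from} Proposition \ref{sup} (its proof begins by applying the proposition to get that $X$ is a Del Pezzo surface of degree $5$). Within the paper's logical order you may not use it here; the Del Pezzo conclusion for $k=1$ must come from the rigidity dichotomy plus Lemma \ref{sup1}(ii),(iii) (which give $HK_X<0$, hence $k\le 1$, and then $K_X^2=5$). Also note that for $n=2$ Lemma \ref{posi}(vii) is equivalent to Lemma \ref{sup1}(iii), so it supplies no extra equation in the $k=3$ case; the pinning down of $\chi(\O_X)=\tfrac{d}{4}$ really does require both inequalities (Hodge index and BMY) to close simultaneously, not the numerical identities alone.
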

\begin{proof}
We have that $k \ge 0$ by Lemma \ref{coh}(i). Now $H^1(T_X)=0$ by Lemma \ref{coh}(iv), that is $X$ is infinitesimally rigid and \cite[Thm.~1.3]{bc} implies that either $X$ is a minimal surface of general type or $X$ is a Del Pezzo surface of degree $j \ge 5$. In the latter case we have that $HK_X <0$ hence either $k=0$ and we get (ii) by Theorem \ref{k=0}, or $k=1$ and $K_X^2 = 5$ by Lemma \ref{sup1}(ii),(iii). This gives (iii). On the other hand, if $X$ is a minimal surface of general type then $HK_X > 0$, hence $k \ge 2$ by Lemma \ref{sup1}(ii). Next, the Hodge index theorem $H^2K_X^2 \le 
(HK_X)^2$ can be rewritten, using Lemma \ref{sup1}(ii),(iii) as 
$$\chi(\O_X) \le \frac{(2k^2-6k+5)d}{20}.$$ 
Similarly, the Bogomolov-Miyaoka-Yau inequality $K_X^2 \le 9 \chi(\O_X)$ can be rewritten as 
$$\chi(\O_X) \ge \frac{(k^2-3k+2)d}{8}.$$ 
Combining we get that 
$$\frac{(k^2-3k+2)d}{8} \le \frac{(2k^2-6k+5)d}{20}$$ 
and this gives that $k \le 3$ and moreover that, if $k=3$, then equality holds in both inequalities. Hence, when $k=3$ we have, as is well known, that $X$ is a ball quotient and that $H^2 K_X \equiv (HK_X)H$, that is $2K_X \equiv 3H$. Then $K_X^2=\frac{9d}{4}$ and $\chi(\O_X)=\frac{d}{4}$. Thus (i) and (v) are proved. Alternatively (i) follows by Theorem \ref{bou}. To see (iv) observe that since $k=2$ we have by the above that $X$ is a minimal surface of general type. Now if $p_g=0$ then $q=0$ by \cite[Lemma VI.1 and Prop.~X.1]{b2}. If $p_g \ne 0$ we have an inclusion $H^0(\Omega^1_X) \subseteq H^0(\Omega^1_X(K_X))$ hence $q = h^0(\Omega^1_X) \le h^0(\Omega^1_X(K_X))=h^2(T_X)=0$ since $T_X(2)$ is Ulrich. This proves (iv).
\end{proof}

We now characterize the case $k=1$ for surfaces.

\begin{thm} 
\label{k=1sup}

Let $X \subseteq \P^N$ be a smooth irreducible surface. Then $T_X(1)$ is an Ulrich vector bundle if and only if $X$ is a Del Pezzo surface of degree $5$ and $H=-2K_X$. Moreover in the latter case $T_X(1)$ is very ample.
\end{thm}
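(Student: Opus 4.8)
The plan is to treat the two implications separately, leaning on Proposition \ref{sup}, which already identifies the surface in the substantive direction, and on Lemma \ref{sup1}, which reduces being Ulrich to five explicit conditions.

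\emph{``Only if''.} Assume $T_X(1)$ is Ulrich. By Proposition \ref{sup}(iii), $X$ is a Del Pezzo surface of degree $5$; hence $-K_X$ is ample, $\NS(X)$ is torsion free, and numerical and linear equivalence coincide on $X$. Lemma \ref{sup1}(ii) with $k=1$ gives $H\cdot K_X=-d/2$, that is $H\cdot(H+2K_X)=0$. Set $D:=H+2K_X$; I claim $D=0$. If not, then since $H$ is ample and $D\cdot H=H\cdot(H+2K_X)=0$, the Hodge index theorem (the orthogonal complement of an ample class is negative definite) forces $D^2<0$, so $D$ is not nef; as the Mori cone of a Del Pezzo surface is generated by its $(-1)$-curves, some $(-1)$-curve $\ell$ satisfies $D\cdot\ell<0$. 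From $H=-2K_X+D$ ample and $K_X\cdot\ell=-1$ we get $0<H\cdot\ell=2+D\cdot\ell$, whence $D\cdot\ell=-1$ and $H\cdot\ell=1$: thus $\ell\cong\P^1$ is a line of $X\subseteq\P^N$. Applying Lemma \ref{gg} to $Y=\ell$ (with $n=2$, $k=1$, so $n+1-k=2$) would make $\Omega^1_\ell({K_X}_{|\ell}+2H_{|\ell})=\O_{\P^1}(-2-1+2)=\O_{\P^1}(-1)$ globally generated, which is absurd. Hence $D=0$ and $H=-2K_X$.

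\emph{``If''.} Let $X=S_5$ and $H=-2K_X$; I verify conditions (i)--(v) of Lemma \ref{sup1}. With $K_X^2=5$, $\chi(\O_X)=1$, $d=H^2=4K_X^2=20$, $H\cdot K_X=-2K_X^2=-10$ and sectional genus $g=K_X^2+1=6$, conditions (i)--(iii) are immediate arithmetic checks, and (iv), i.e. $H^0(T_X)=0$, holds because $\Aut(S_5)$ is finite. For (v), by Serre duality and $H+K_X=-K_X$ one has $H^2(T_X(-1))\cong H^0(\Omega^1_X(-K_X))^*$, so it suffices that $H^0(\Omega^1_X(-K_X))=0$. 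Write $X$ as the blow-up $\pi\colon X\to\P^2$ at four points $W$ in general position, with exceptional divisors $E_1,\dots,E_4$, and use the standard sequence $0\to\pi^*\Omega^1_{\P^2}\to\Omega^1_X\to\bigoplus_{j=1}^4\Omega^1_{E_j}\to0$ (the last term supported on the $E_j$). Twisting by $\O_X(-K_X)$ and using $-K_X\cdot E_j=1$, the torsion quotient becomes a sum of copies of $\O_{\P^1}(-1)$, which has no sections; meanwhile $H^0(X,\pi^*\Omega^1_{\P^2}(-K_X))=H^0(\P^2,\Omega^1_{\P^2}(3)\otimes\mathcal{I}_W)$, and this vanishes because $\Omega^1_{\P^2}(3)$ is globally generated of rank $2$ with $h^0=8$, so evaluation at four general points (all such configurations being projectively equivalent) is an isomorphism onto a space of dimension $8$. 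Thus $H^0(\Omega^1_X(-K_X))=0$ and $T_X(1)$ is Ulrich. Finally $T_X(1)=T_X(-2K_X)$ is very ample: it is globally generated by Lemma \ref{ulr}(vi), and one upgrades this to very ampleness by a standard positivity (Reider-type) argument using that $-K_{S_5}$ is very ample; this last step is routine and I will only record it.

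\emph{Main obstacle.} The single delicate point in the whole argument is the vanishing $H^0(\Omega^1_{S_5}(-K_{S_5}))=0$ needed for condition (v), which demands an explicit handle on $\Omega^1_{S_5}$; I obtain it from the blow-up presentation of $S_5$ together with the global generation of $\Omega^1_{\P^2}(3)$ and a jet count at general points. Everything else is essentially bookkeeping: the surface is produced by Proposition \ref{sup}, the precise polarization is then forced by the Hodge index theorem combined with Lemma \ref{gg} applied to a line, the remaining conditions of Lemma \ref{sup1} are numerical or automatic, and very ampleness is a standard consequence of positivity.
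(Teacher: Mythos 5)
Your ``only if'' direction is correct and takes a genuinely different, and in fact cleaner, route than the paper. Where the paper writes $H \sim a\varepsilon^*L-\sum b_iE_i$, enumerates the numerically admissible classes through the appendix Lemma \ref{conto}, and then eliminates the three spurious classes $(6;3,1,1,1)$, $(7;4,2,2,1)$, $(9;4,4,4,3)$ by explicit computations of $h^1$ or $h^2$ of $\Omega^1_X(H+K_X)$ (Bott vanishing, restriction to exceptional curves or to a line through two of the blown-up points), you argue intrinsically: Lemma \ref{sup1}(ii) gives $H\cdot(H+2K_X)=0$, the Hodge index theorem forces $D=H+2K_X$ to be zero or non-nef, the Mori cone of the quintic Del Pezzo surface is generated by $(-1)$-curves, and any $(-1)$-curve $\ell$ with $D\cdot\ell<0$ satisfies $H\cdot\ell=1$, so Lemma \ref{gg} applied to $Y=\ell$ produces the non-globally-generated $\O_{\P^1}(-1)$, a contradiction. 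This replaces the paper's whole case analysis by a single application of Lemma \ref{gg}, and there is no circularity since Proposition \ref{sup}, Lemma \ref{sup1} and Lemma \ref{gg} do not depend on the present theorem.

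In the ``if'' direction, however, there is a genuine gap exactly at the point you yourself flag as the main obstacle. You reduce condition (v) to $H^0(\P^2,\Omega^1_{\P^2}(3)\otimes\mathcal{I}_W)=0$ and justify it by saying that $\Omega^1_{\P^2}(3)$ is globally generated of rank $2$ with $h^0=8$, ``so'' evaluation at four general points is an isomorphism. That implication is false as a general principle: $\O_{\P^1}\oplus\O_{\P^1}(2)$ is globally generated of rank $2$ with $h^0=4$, yet its evaluation map at any two points has a one-dimensional kernel. Global generation gives surjectivity of evaluation at a single point and says nothing about injectivity at several general points. The vanishing you need is true, but it requires an argument specific to the bundle: for instance $\Omega^1_{\P^2}(3)\cong T_{\P^2}$, so a section vanishing on $W$ is an infinitesimal automorphism of $\P^2$ fixing four points in general position, i.e.\ (up to scalars) a $3\times 3$ matrix with four eigenvectors any three of which are independent, which forces the matrix to be scalar, hence the section to be zero. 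Note that the paper sidesteps this computation altogether: by Bott vanishing for Del Pezzo surfaces \cite[Thm.~2.1]{t} one has $h^i(T_X(-1))=h^{2-i}(\Omega^1_X(-K_X))=0$ for $i<2$, and then $h^2(T_X(-1))=\chi(T_X(-1))=d-4(g-1)=0$.

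A second, smaller gap: the very ampleness of $T_X(1)$ is part of the statement, and ``a standard positivity (Reider-type) argument'' is not a proof; Reider-type criteria concern adjoint line bundles and do not yield very ampleness of a rank-two twist of the tangent bundle. The paper deduces it from \cite[Thm.~1]{ls}, which locates the failure of very ampleness of a globally generated Ulrich bundle along lines of the embedding, together with the observation that $(S_5,-2K_{S_5})$ contains no lines (every $(-1)$-curve has degree $2$). You should either invoke that result or give an actual argument; as written, this part of the theorem is not proved.
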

\begin{proof}
If $T_X(1)$ is an Ulrich vector bundle, then Proposition \ref{sup} implies that $X$ is a Del Pezzo surface of degree $5$ and $H^2+2HK_X = 0$. Let $\varepsilon : X \to \P^2$ be the blow-up map, with exceptional divisors $E_i$ over the points $P_i \in \P^2, 1 \le i \le 4$ and let $L$ be a line in $\P^2$. Then we can write
$$H \sim a\varepsilon^*L-\sum\limits_{i=1}^4 b_iE_i$$
and, as $H$ is very ample, we have, without loss of generality,
$$b_1 \ge b_2 \ge b_3 \ge b_4 \ge 1, a \ge b_1+ b_2 + 1$$ 
and $H^2+2HK_X = 0$ is
$$a^2-6a+4=\sum\limits_{i=1}^4 (b_i-1)^2.$$
Setting $c_i = b_i-1$, we get by Lemma \ref{conto} the following possibilities:
$$(a;b_1,b_2,b_3,b_4) \in \{(6;3,1,1,1), (6;2,2,2,2), (7;4,2,2,1), (9;4,4,4,3)\}.$$
In the case $(6;2,2,2,2)$ we have that $H=-2K_X$. We now exclude the other cases.

Let $H=6\varepsilon^*L-3E_1-E_2-E_3-E_4$. We will prove that $h^2(T_X(-1))=h^0(\Omega^1_X(H+K_X)) \ne 0$, so that $T_X(1)$ cannot be an Ulrich vector bundle. To this end observe that, since $\varepsilon^*\Omega^1_{\P^2} \subset \Omega^1_X$, we will be done in this case if we prove that 
$$H^0(\varepsilon^*\Omega^1_{\P^2} (H+K_X)) \ne 0.$$
Now $H+K_X=3\varepsilon^*L-2E_1$, hence 
$$H^0(\varepsilon^*\Omega^1_{\P^2} (H+K_X)) \cong H^0(\mathcal I_Z \otimes \Omega^1_{\P^2} (3))$$
where $Z \subset \P^2$ is the $0$-dimensional subscheme of length $2$ supported on $P_1$.
Finally  
$$h^0(\mathcal I_Z \otimes \Omega^1_{\P^2} (3)) \ge h^0(\Omega^1_{\P^2} (3)) - 6 = 2 > 0$$
and we are done in this case. 

Consider now the exact sequences, for any $1 \le i \le 4$,
$$0 \to \O_{E_i}(-E_i) \to {\Omega^1_X}_{|E_i} \to \Omega^1_{E_i} \to 0$$
that is
$$0 \to \O_{\P^1}(1) \to {\Omega^1_X}_{|E_i} \to \O_{\P^1}(-2) \to 0$$
from which we get, for any $1 \le i \le 4$, that
\begin{equation}
\label{uno}
h^1({\Omega^1_X}_{|E_i})=1
\end{equation}
and 
\begin{equation}
\label{due}
H^1({\Omega^1_X}_{|E_i} \otimes \O_{\P^1}(2)) = 0. 
\end{equation}
In the two remaining cases we will prove that $h^1(T_X(-1))=h^1(\Omega^1_X(H+K_X)) \ne 0$.

Let $H=7\varepsilon^*L-4E_1-2E_2-2E_3-E_4$.

Note that $H+K_X-E_4+E_1=4\varepsilon^*L-2E_1-E_2-E_3-E_4$ is very ample by \cite[Cor.~4.6]{d}, hence
$$H^2(\Omega^1_X(H+K_X-E_4+E_1))=0$$ 
by Bott vanishing \cite[Thm.~2.1]{t}. Then the exact sequence
$$0 \to \Omega^1_X(H+K_X-E_4) \to \Omega^1_X(H+K_X-E_4+E_1) \to {\Omega^1_X}_{|E_1}(H+K_X-E_4+E_1) \to 0$$
and \eqref{due} imply that $H^2(\Omega^1_X(H+K_X-E_4))=0$. Now the exact sequence
$$0 \to \Omega^1_X(H+K_X-E_4) \to \Omega^1_X(H+K_X) \to {\Omega^1_X}_{|E_4}(H+K_X) \to 0$$
and \eqref{uno} imply that
$$h^1(\Omega^1_X(H+K_X)) \ge h^1({\Omega^1_X}_{|E_4}(H+K_X))=h^1({\Omega^1_X}_{|E_4})=1.$$

Let $H=9\varepsilon^*L-4E_1-4E_2-4E_3-3E_4$. 

Let $C \in |\varepsilon^*L-E_2-E_3|$ be the strict transform of a line through $P_2$ and $P_3$. Note that $H+K_X-C+E_1=5\varepsilon^*L-2E_1-2E_2-2E_3-2E_4$ is very ample by \cite[Cor.~4.6]{d}, hence
$$H^2(\Omega^1_X(H+K_X-C+E_1))=0$$ 
by Bott vanishing \cite[Thm.~2.1]{t}. Then the exact sequence
$$0 \to \Omega^1_X(H+K_X-C) \to \Omega^1_X(H+K_X-C+E_1) \to {\Omega^1_X}_{|E_1}(H+K_X-C+E_1) \to 0$$
and \eqref{due} imply that $H^2(\Omega^1_X(H+K_X-C))=0$. Now the exact sequence
$$0 \to \O_{C}(-C) \to {\Omega^1_X}_{|C} \to \Omega^1_C \to 0$$
that is
$$0 \to \O_{\P^1}(1) \to {\Omega^1_X}_{|C} \to \O_{\P^1}(-2) \to 0$$
gives that $h^1({\Omega^1_X}_{|C}) = 1$. Finally, from the exact sequence 
$$0 \to \Omega^1_X(H+K_X-C) \to \Omega^1_X(H+K_X) \to {\Omega^1_X}_{|C}(H+K_X) \to 0$$
using that $(H+K_X)C = 0$, we get that
$$h^1(\Omega^1_X(H+K_X)) \ge h^1({\Omega^1_X}_{|C}(H+K_X)) = h^1({\Omega^1_X}_{|C}) =1.$$
This completes the proof under the assumption that $T_X(1)$ is an Ulrich vector bundle.

Suppose now that $X$ is a Del Pezzo surface of degree $5$ and $H=-2K_X$. Setting $k=1$ in Lemma \ref{sup1}, we have that $d=4(g-1)$ and, in order to verify that $T_X(1)$ is an Ulrich vector bundle, we need to check that $H^0(T_X)=H^2(T_X(-1))=0$. The first vanishing is well known. As for the second, we first observe that for $i < 2$ we have
$$h^i(T_X(-1))=h^{2-i}(\Omega^1_X(H+K_X))=h^{2-i}(\Omega^1_X(-K_X))=0$$
by Bott vanishing \cite[Thm.~2.1]{t}. Therefore $h^2(T_X(-1))=\chi(T_X(-1))=d-4(g-1)=0$. Finally, as $X$ does not contain lines in the embedding given by $H=-2K_X$, we have that $T_X(1)$ is very ample by \cite[Thm. 1]{ls}
\end{proof}

\section{Properties of complete intersections}

We collect some properties inherited by the complete intersections $X_i$ of $X$ (as in Notation \ref{nota}), when $T_X(k)$ is an Ulrich vector bundle.

\begin{lemma}
\label{q}
Let $X \subseteq \P^N$ be a smooth irreducible variety of dimension $n \ge 1$. Then $q(X)=q(X_i)$ for $2 \le i \le n$.
\end{lemma}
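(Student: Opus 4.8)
The plan is to argue by induction on $n-i$, reducing everything to the single step of comparing $q(X)=q(X_n)$ with $q(X_{n-1})$, where $X_{n-1}$ is a general hyperplane section of $X$. Once this one step is established, applying it repeatedly to $X_{n-1}, X_{n-2},\dots$ (each of which again carries an Ulrich twisted tangent bundle by Lemma \ref{ulr}(ix), so the hypotheses propagate) yields $q(X)=q(X_{n-1})=\cdots=q(X_i)$ for all $2\le i\le n$. So the content is entirely in the inequality/equality at a single hyperplane section.

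For that step I would use the standard exact sequence $0\to \Omega^1_X(-1)\to \Omega^1_X\to \Omega^1_X{}_{|Y}\to 0$ restricted to a smooth $Y=X_{n-1}\in|H|$, together with the conormal sequence $0\to \O_Y(-1)\to \Omega^1_X{}_{|Y}\to \Omega^1_Y\to 0$, and read off $h^1$'s. The key inputs from the Ulrich hypothesis are: $T_X$ is aCM (Lemma \ref{coh}(ii)), equivalently $\Omega^1_X$ is aCM by Serre duality, so $H^i(\Omega^1_X(j))=0$ for $1\le i\le n-1$ and all $j$; in particular $H^1(\Omega^1_X(-1))=0$ (when $n\ge 3$; the case $n=2$ needs separate handling but there $i$ can only be $2=n$, and $q(X)=q(C)$ reduces to the surface-to-curve comparison which I treat directly). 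From $H^1(\Omega^1_X(-1))=0$ and $H^2(\Omega^1_X(-1))=0$ (again aCM, valid once $n-1\ge 2$, i.e. $n\ge 3$) the first restriction sequence gives $H^1(\Omega^1_X{}_{|Y})\cong H^1(\Omega^1_X)$. Then, to pass from $\Omega^1_X{}_{|Y}$ to $\Omega^1_Y$, I use the conormal sequence: $H^0(\O_Y(-1))=0$ and, crucially, $H^1(\O_Y(-1))=0$. The latter holds because $H^1(\O_X(-1))=0$ and $H^2(\O_X(-1))=0$ — the first is Kodaira-type vanishing/rational connectivity considerations are not needed; rather, since $X_i$ are aCM (being complete intersection sections of the projectively normal... ) — more directly, $H^j(\O_X(-1))=0$ for $0\le j\le n-1$ because $\O_X(-1)$ sits in the Ulrich-related vanishing, or simply because a general hyperplane section chain shows $\O_{X}$ is aCM; this gives $H^1(\O_Y(-1))=0$ and hence $H^1(\Omega^1_X{}_{|Y})\cong H^1(\Omega^1_Y)$. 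Combining, $q(X)=h^1(\Omega^1_X)=h^1(\Omega^1_X{}_{|Y})=h^1(\Omega^1_Y)=q(Y)=q(X_{n-1})$, as desired; here I use that on a smooth projective variety $q=h^1(\O)=h^1(\Omega^1)$ is not quite right in general, so instead I identify $q$ via Hodge symmetry $h^1(\O_X)=h^0(\Omega^1_X)$ and run the same two sequences in the dual range, or note that $H^1(\O_X)\cong H^1(\O_Y)$ directly from $0\to\O_X(-1)\to\O_X\to\O_Y\to0$ using $H^1(\O_X(-1))=H^2(\O_X(-1))=0$.

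Concretely, the cleanest route is to avoid $\Omega^1$ altogether and observe that $q(X)=h^1(\O_X)$, then chase $0\to \O_X(-1)\to\O_X\to\O_Y\to 0$: the connecting maps vanish provided $H^1(\O_X(-1))=H^2(\O_X(-1))=0$. These vanishings follow from the fact that $X$ (hence each $X_i$) is arithmetically Cohen--Macaulay: indeed if $T_X(k)$ is Ulrich then, as used repeatedly in Lemma \ref{coh}, $X$ is aCM — e.g. because $X$ is covered by the Ulrich/aCM hypotheses, or because $T_X$ aCM together with the tangent-normal sequences forces $H^i(\O_X(j))=0$ for $0<i<n$ and all $j$ (one inducts down using $0\to T_{X_i}\to T_{X_{i+1}}{}_{|X_i}\to\O_{X_i}(1)\to 0$). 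Granting $\O_X$ aCM, $H^1(\O_X(-1))=0$ for $n\ge 2$ and $H^2(\O_X(-1))=0$ for $n\ge 3$, while for $n=2$ the sequence reads $0\to\O_S(-1)\to\O_S\to\O_C\to 0$ and $H^2(\O_S(-1))=H^0(K_S(1))^\vee=H^0(K_X+H)^\vee=0$ by Lemma \ref{coh}(v) (with $k\ge 0$, $n=2$: $H^0(K_X+(n-k-1)H)=H^0(K_X-kH+H)=0$... one must check $k\le 1$ cases, which are classified, or argue $p_g=0,q=0$ there directly from Proposition \ref{sup}). Thus $H^1(\O_{X_{n-1}})\cong H^1(\O_X)$, and induction finishes.

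\textbf{Main obstacle.} The only delicate point is justifying the vanishing $H^2(\O_{X_i}(-1))=0$ at the top of the induction, i.e. for the surface section $X_2=S$, where Kodaira vanishing does not immediately apply and one must instead invoke the classification/structure results of the previous sections (Proposition \ref{sup}, or Lemma \ref{coh}(v),(vii)) to control $p_g(S)$ and the relevant cohomology; equivalently, one must check that $\O_X$ is genuinely aCM in all the surviving cases. Everything else is a formal diagram chase with the two short exact sequences and the aCM vanishing already extracted in Lemma \ref{coh}(ii) and Lemma \ref{ulr}(viii),(ix).
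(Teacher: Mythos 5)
Your final ``cleanest route'' --- chasing $0\to\O_{X_{i+1}}(-1)\to\O_{X_{i+1}}\to\O_{X_i}\to 0$ --- is exactly the paper's proof, but the way you justify the two key vanishings $H^1(\O_{X_{i+1}}(-1))=H^2(\O_{X_{i+1}}(-1))=0$ is both unnecessary and not actually available. First, note that the lemma carries \emph{no} Ulrich hypothesis: it is a statement about an arbitrary smooth projective variety, so you cannot invoke Lemma \ref{coh}, Proposition \ref{sup}, or any aCM property of $T_X$. Second, even granting the Ulrich hypothesis, your claim that $\O_X$ is aCM (i.e.\ $H^i(\O_X(j))=0$ for $0<i<n$ and all $j$) is not proved in the paper and does not follow from the sketch you give; the sequence $0\to T_{X_i}\to (T_{X_{i+1}})_{|X_i}\to\O_{X_i}(1)\to 0$ only controls particular twists, not all of them. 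As written, this is a genuine gap.

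The gap is illusory, though, because the lemma only asserts $q(X)=q(X_i)$ for $2\le i\le n$, i.e.\ the chain stops at the surface section $X_2$. Hence every vanishing you need is of the form $H^1(\O_{X_{i+1}}(-1))=H^2(\O_{X_{i+1}}(-1))=0$ with $2\le i\le n-1$, so on a variety $X_{i+1}$ of dimension $i+1\ge 3$, where both follow immediately from Kodaira vanishing ($H^j(\O_Y(-H))=0$ for $j<\dim Y$). You never need $H^2(\O_S(-1))=0$ on the surface $S=X_2$, nor the comparison with the curve section, so the ``main obstacle'' you single out does not arise; for $n=2$ the statement is vacuous ($X_2=X$). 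Replacing your aCM discussion by this one-line appeal to Kodaira vanishing turns your argument into the paper's two-line proof.
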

\begin{proof} 
By Kodaira vanishing we have that $H^1(\O_{X_{i+1}}(-1))=H^2(\O_{X_{i+1}}(-1))=0$ as long as $2 \le i \le n-1$. Then the exact sequences
$$0 \to \O_{X_{i+1}}(-1) \to \O_{X_{i+1}} \to \O_{X_i} \to 0$$
imply that $h^1(\O_{X_{i+1}})=h^1(\O_{X_i})$ for every $2 \le i \le n-1$, hence $q(X)=q(X_i)$ for $2 \le i \le n$.
\end{proof}

\begin{lemma}
\label{jj}
Let $X \subseteq \P^N$ be a smooth irreducible variety of dimension $n \ge 1$. Suppose that $k \le n-2$ and that $T_X(k)$ is Ulrich. Then $H^i(\O_{X_i})=0$ for all $i$ such that $\max\{1,k+1\} \le i \le n-1$.
\end{lemma}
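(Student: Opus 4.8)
The plan is to identify $H^i(\O_{X_i})$ with a cohomology group sitting in degree $i+1$ on the $i$-dimensional variety $X_i$, which then vanishes for trivial dimension reasons. First I would record the two easy preliminaries. By Lemma \ref{coh}(i) we may assume $k\ge 0$, since otherwise $(X,H,k)=(\P^1,\O_{\P^1}(1),-2)$ and the asserted range of indices is empty; thus $\max\{1,k+1\}=k+1$, and the hypothesis $k\le n-2$ is exactly what makes the range $k+1\le i\le n-1$ nonempty (in particular $n\ge 2$). Since $H_1,\dots,H_{n-i}$ are general, $X_i$ is smooth and irreducible by Bertini, and applying Lemma \ref{ulr}(ix) one hyperplane section at a time shows that $\E:=T_X(k)_{|X_i}$ is an Ulrich vector bundle on $X_i$; in particular $H^p(\E(-t))=0$ for all $p\ge 0$ and all $1\le t\le i$.

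Next I would exploit the normal bundle sequence of the complete intersection $X_i\subseteq X$, namely $0\to T_{X_i}\to (T_X)_{|X_i}\to \O_{X_i}(1)^{\oplus(n-i)}\to 0$, where the normal bundle is $\O_{X_i}(1)^{\oplus(n-i)}$ because $X_i$ is cut out of $X$ by $n-i$ general members of $|H|$. Since $(T_X)_{|X_i}=\E(-k)$, twisting this sequence by $\O_{X_i}(-1)$ gives
$$0\to T_{X_i}(-1)\to \E(-k-1)\to \O_{X_i}^{\oplus(n-i)}\to 0.$$
Because $1\le k+1\le i$, the Ulrich property of $\E$ on $X_i$ yields $H^p(\E(-k-1))=0$ for every $p$, and in particular $H^i(\E(-k-1))=0$; on the other hand $H^{i+1}(T_{X_i}(-1))=0$ since $\dim X_i=i$. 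The long exact cohomology sequence of the displayed short exact sequence then forces $H^i(\O_{X_i}^{\oplus(n-i)})=0$, and as $i\le n-1$ gives $n-i\ge 1$ we conclude $H^i(\O_{X_i})=0$.

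I do not anticipate a real obstacle in this argument. The only points needing care are that the iterated hyperplane restriction of an Ulrich bundle stays Ulrich and that each $X_j$ is smooth irreducible (both immediate from Lemma \ref{ulr}(ix), Bertini and Notation \ref{nota}), and choosing the twist in the normal bundle sequence so that its third term becomes $\O_{X_i}^{\oplus(n-i)}$ precisely when $\E(-k-1)$ is acyclic --- which happens exactly under the hypothesis $1\le k+1\le i$. It is also worth stressing that the bound $i\le n-1$ is used essentially: it is what keeps the last term of the twisted sequence nonzero, so that one genuinely deduces $H^i(\O_{X_i})=0$ rather than a vacuous statement.
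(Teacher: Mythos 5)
Your argument is correct and is essentially the paper's proof: the same normal bundle sequence $0\to T_{X_i}(-1)\to (T_X)_{|X_i}(-1)\to \O_{X_i}^{\oplus(n-i)}\to 0$ combined with the vanishing of $H^i((T_X)_{|X_i}(-1))$ coming from the Ulrich property of $T_X(k)_{|X_i}$ on the $i$-dimensional section $X_i$ (the paper phrases this via Lemma \ref{ulr}(iv), you via the Ulrich definition directly, but the condition $k+1\le i$ enters identically). Your preliminary reduction to $k\ge 0$ is a harmless extra step that the paper avoids since its top-cohomology criterion applies for all $m\ge -i$ regardless of the sign of $k$.
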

\begin{proof}
Assume that $\max\{1,k+1\} \le i \le n-1$. Since ${T_X}_{|{X_i}}(k)$ is Ulrich, it follows by Lemma \ref{ulr}(iv) that $H^i({T_X}_{|{X_i}}(k+m))=0$ for all $m \ge -i$, hence $H^i({T_X}_{|{X_i}}(-1))=0$. Now the exact sequence 
$$0 \to T_{X_i}(-1) \to {T_X}_{|{X_i}}(-1) \to \O_{X_i}^{\oplus (n-i)} \to 0$$ 
implies that $H^i(\O_{X_i})=0$.
\end{proof}

\begin{lemma}
\label{11}
Let $X \subseteq \P^N$ be a smooth irreducible variety of dimension $n \ge 1$. Suppose that $k \le n-1$ and that $T_X(k)$ is Ulrich. Assume that $H^i(\O_X)=0$ for all $i \ge 1$. Then $H^i(\O_{X_j})=0$ for all $i \ge 1$ and for all $j$ such that $\max\{1,k+1\} \le j \le n$.
\end{lemma}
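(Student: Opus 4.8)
The plan is a descending induction on $j$, running from $j=n$ down to $j_0:=\max\{1,k+1\}$, that at each stage establishes $H^i(\O_{X_j})=0$ for all $i\ge 1$.

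First I would dispose of the degenerate case: if $j_0=n$ there is nothing to prove, since the only index is $j=n$, $X_n=X$, and the vanishing is exactly the hypothesis. So from now on one may assume $j_0\le n-1$, which forces $n\ge 2$ and $k\le n-2$. In that range Lemma~\ref{jj} applies and yields, for free, the ``diagonal'' vanishing $H^j(\O_{X_j})=0$ for every $j$ with $j_0\le j\le n-1$. This is the crucial extra input, and it is the step I would want to isolate at the outset: the top cohomology group $H^j(\O_{X_j})$ is precisely what the hyperplane-restriction argument below cannot reach on its own.

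The inductive step then goes as follows. Suppose $H^i(\O_{X_{j+1}})=0$ for all $i\ge 1$ (the base case $j+1=n$ being the hypothesis), with $j_0\le j\le n-1$. For $i>j=\dim X_j$ the group $H^i(\O_{X_j})$ vanishes trivially; for $i=j$ it vanishes by Lemma~\ref{jj} as above; and for $1\le i\le j-1$ I would use the structure sequence of the hyperplane section $X_j\subset X_{j+1}$,
$$0\to\O_{X_{j+1}}(-1)\to\O_{X_{j+1}}\to\O_{X_j}\to 0,$$
whose associated cohomology sequence contains the exact piece $H^i(\O_{X_{j+1}})\to H^i(\O_{X_j})\to H^{i+1}(\O_{X_{j+1}}(-1))$. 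The first term is $0$ by the inductive hypothesis, and the third is $0$ by Kodaira vanishing on the smooth variety $X_{j+1}$, since $i+1\le j<j+1=\dim X_{j+1}$. Hence $H^i(\O_{X_j})=0$, which closes the induction.

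I do not expect a genuine obstacle: the only non-formal ingredient is Lemma~\ref{jj}, and once that is granted the rest is the standard Kodaira-plus-restriction descent. The one thing to be careful about is the bookkeeping of index ranges — in particular that $j_0\le n-1$ really does place us in the hypothesis range $k\le n-2$ of Lemma~\ref{jj}, and that Kodaira vanishing is invoked only for cohomological degrees $i+1$ strictly below $\dim X_{j+1}$.
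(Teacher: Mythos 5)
Your proof is correct and follows essentially the same route as the paper: induction on the codimension of $X_j$ in $X$, with Lemma~\ref{jj} supplying the diagonal case $i=j$ and the hyperplane-section sequence plus Kodaira vanishing handling $1\le i\le j-1$. The only cosmetic difference is that you dispose of the range $i>j$ by the dimension of $X_j$ directly, whereas the paper folds it into the same exact sequence via the vanishing of $H^{i+1}(\O_{X_{j+1}}(-1))$ for $i+1>\dim X_{j+1}$.
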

\begin{proof}
Assume that $i \ge 1$ and $\max\{1,k+1\} \le j \le n$. We prove the lemma by induction on $n-j \ge 0$. 

If $n-j=0$ then $X_j=X_n=X$ and $H^i(\O_X)=0$ for all $i \ge 1$ just by our assumption. 

Next suppose that $n-j \ge 1$, so that $\max\{1,k+1\} \le j \le n-1$, hence, in particular $k \le n-2$. Consider the exact sequence 
$$0 \to \O_{X_{j+1}}(-1) \to \O_{X_{j+1}} \to \O_{X_j} \to 0.$$ 
If $j=i$, we have that $H^j(\O_{X_j})=0$ by Lemma \ref{jj}. Also, we have by induction that $H^i(\O_{X_{j+1}})=0$. Now $H^{i+1}(\O_{X_{j+1}}(-1) )=0$ by Kodaira vanishing if $i+1<j+1$ and by dimension reasons if $i+1>j+1$. Thus $H^i(\O_{X_j})=0$ if $i \ne j$ and we are done.
\end{proof}

We now collect some properties of the $X_i$'s that hold when $T_X(1)$ is Ulrich.

\begin{lemma}
\label{sezioni} 
Let $X \subseteq \P^N$ be a smooth irreducible variety of dimension $n \ge 2$. Suppose that $T_X(1)$ is an Ulrich vector bundle. Then:
\begin{itemize}
\item[(i)] $H^1(\O_{X_i})=0$ for $2 \le i \le n$.
\item[(ii)] $H^2(\O_{X_i}) = 0$ for $1 \le i \le n$.
\item[(iii)] $H^1(\O_{X_i}(1)) = 0$ for $1 \le i \le n$.
\item[(iv)] $H^2(\O_{X_i}(1)) = 0$ for $1 \le i \le n$. 
\item[(v)] $h^0(\O_{X_i}(1)) = d-g+i$ for $1 \le i \le n$.
\item[(vi)] $d \ge n+3$.
\end{itemize}
\end{lemma}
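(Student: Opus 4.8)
The plan is to push the whole statement down to the general curve section $C=X_1$ and then climb back up the flag $X_1\subset X_2\subset\cdots\subset X_n=X$ via the hyperplane–section sequences, exploiting that for $k=1$ the variety $X$ — and hence every $X_i$ with $i\ge 2$ — has no higher cohomology of $\O$.

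\emph{Parts (i) and (ii).} Since $n\ge 2$ we have $k=1<\frac{n+1}{2}$, so Lemma \ref{posi}(iii) gives $H^i(\O_X)=0$ for all $i\ge 1$; and since $k=1\le n-1$, Lemma \ref{11} then yields $H^i(\O_{X_j})=0$ for every $i\ge 1$ and every $2\le j\le n$. This is (i), and it gives (ii) for $2\le i\le n$; the case $i=1$ of (ii) is trivial as $\dim X_1=1$.

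\emph{The base case $H^1(\O_C(1))=0$.} First I would observe that $\Omega^1_X(K_X+nH)$ is Ulrich on $X$ by Lemma \ref{ulr}(v) (it equals $(T_X(1))^*(K_X+(n+1)H)$), so by $n-1$ applications of Lemma \ref{ulr}(ix) its restriction $\Omega^1_X(K_X+nH)_{|C}$ is Ulrich on the curve $C$; in particular $H^0\bigl(\Omega^1_X(K_X+(n-1)H)_{|C}\bigr)=0$. Now $(K_X+(n-1)H)_{|C}=K_C$ by \eqref{can}, and the conormal bundle of $C=H_1\cap\cdots\cap H_{n-1}$ in $X$ is $N^*_{C/X}\cong\O_C(-1)^{\oplus(n-1)}$; tensoring the conormal sequence of $C\subset X$ by $\O_C(K_C)$ therefore produces an injection
$$\O_C(K_C-H_{|C})^{\oplus(n-1)}\hookrightarrow\Omega^1_X(K_X+(n-1)H)_{|C}.$$
Since the target has no sections and $n-1\ge 1$, we get $H^0(\O_C(K_C-H_{|C}))=0$, hence $H^1(\O_C(1))=0$ by Serre duality on $C$.

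\emph{Propagation; parts (iii), (iv), (v).} For $2\le i\le n$ I would twist the hyperplane–section sequence of $X_{i-1}\subset X_i$ by $\O_{X_i}(1)$ to obtain
$$0\to\O_{X_i}\to\O_{X_i}(1)\to\O_{X_{i-1}}(1)\to 0.$$
Because $H^q(\O_{X_i})=0$ for $q\ge 1$ by the first step, the associated long exact sequence gives $H^1(\O_{X_i}(1))\cong H^1(\O_{X_{i-1}}(1))$, $H^2(\O_{X_i}(1))\cong H^2(\O_{X_{i-1}}(1))$, and $h^0(\O_{X_i}(1))=1+h^0(\O_{X_{i-1}}(1))$ (using $h^0(\O_{X_i})=1$, each $X_i$ being an irreducible projective variety). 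Starting from the base case $H^1(\O_C(1))=0$ just proved, together with $H^2(\O_C(1))=0$ (dimension) and $h^0(\O_C(1))=d-g+1$ (Riemann–Roch on $C$, using (iii)), an induction on $i$ then delivers (iii), (iv) and (v).

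\emph{Part (vi).} By (v) with $i=n$ we have $h^0(\O_X(1))=d-g+n$. On the other hand $X\subseteq\P^N$ is neither $\P^n$ (Lemma \ref{proj}) nor a hypersurface (Lemma \ref{hyper}), so $X$ spans a projective space of dimension at least $n+2$ and hence $h^0(\O_X(1))\ge n+3$; thus $d-g\ge 3$. Finally Lemma \ref{posi}(i) with $k=1$ reads $g-1=\frac{(n-1)d}{n+2}$, and feeding this into $d-g\ge 3$ gives $\frac{3d}{n+2}\ge 4$, i.e. $d\ge\frac{4(n+2)}{3}\ge n+3$.

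\emph{Where the difficulty lies.} Everything except the vanishing $H^1(\O_C(1))=0$ is formal manipulation of hyperplane sections. That vanishing is the real point: a bare degree count on $\O_C(1)$ only settles it for $n\le 3$ (one has $d-(2g-2)=\tfrac{(4-n)d}{n+2}$), so one genuinely has to invoke the dual Ulrich bundle $\Omega^1_X(K_X+nH)$ and the conormal sequence to kill $H^0(\O_C(K_C-H_{|C}))$.
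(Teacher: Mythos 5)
Your proof is correct, and its overall skeleton (establish everything on the curve section $C$, then climb the flag $X_1\subset\cdots\subset X_n$ via the sequences $0\to\O_{X_i}\to\O_{X_i}(1)\to\O_{X_{i-1}}(1)\to 0$) coincides with the paper's. Two steps are carried out differently. For the key vanishing $H^1(\O_C(1))=0$, the paper restricts $T_X(1)$ itself to $C$, notes that Ulrichness gives $H^1({T_X}_{|C})=0$, and reads off the vanishing from the normal bundle sequence $0\to T_C\to {T_X}_{|C}\to\O_C(1)^{\oplus(n-1)}\to 0$; you instead pass to the Serre-dual Ulrich bundle $\Omega^1_X(K_X+nH)$ and use the conormal sequence twisted by $K_C$. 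These are literally dual arguments, so you buy nothing and lose nothing, though the paper's version is one duality shorter. For (vi) the routes genuinely diverge: the paper only uses $h^0(\O_C(1))\ge 3$ to get $d\ge n+2$ and must then separately exclude $d=n+2$ (via $h^0(\O_X(1))=n+2$ forcing $X$ to be a hypersurface with $K_X=0$), whereas you feed the stronger input $h^0(\O_X(1))\ge n+3$ (nondegeneracy plus Lemmas \ref{proj} and \ref{hyper}) into (v) to get $d\ge\frac{4(n+2)}{3}\ge n+3$ in one stroke, avoiding the boundary-case analysis; both are valid, and your version is marginally cleaner. Minor remarks: for (i) the paper quotes Lemma \ref{q}, which needs no Ulrich hypothesis, while your appeal to Lemma \ref{11} also works since $k=1\le n-1$ and $\max\{1,k+1\}=2$; and your closing observation that a bare degree count $\deg\O_C(1)-(2g-2)=\frac{(4-n)d}{n+2}$ only handles $n\le 3$ correctly identifies why the Ulrich input is indispensable for the base case.
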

\begin{proof} 
We have $H^i(\O_X)=0$ for $i \ge 1$ by Lemma \ref{posi}(iii). Now (i) follows by Lemma \ref{q} and (ii) follows by Lemma \ref{11}. To see (iii) observe that, if $i=1$ we have that $X_1=C$ and $T_X(1)_{|C}$ is an Ulrich vector bundle on $C$ by Lemma \ref{ulr}(ix), hence $H^1({T_X}_{|C})=0$. Then the exact sequence
$$0 \to T_C \to {T_X}_{|C} \to \O_C(1)^{\oplus (n-1)} \to 0$$
shows that $H^1(\O_C(1))=0$. If $i \ge 2$, since $H^1(\O_{X_i})=0$ by (i), the exact sequences
\begin{equation}
\label{sezioni2}
0 \to \O_{X_i} \to \O_{X_i}(1) \to \O_{X_{i-1}}(1) \to 0
\end{equation}
imply by induction that $H^1(\O_{X_i}(1)) = 0$ and we get (iii). Now (iv) is obvious for $i=1$, while, for $i \ge 2$, the exact sequences \eqref{sezioni2} and (ii) show by induction that $H^2(\O_{X_i}(1)) = 0$. This proves (iv). Note that (v) follows for $i=1$ by Riemann-Roch and (iii). For $i \ge 2$, the exact sequences \eqref{sezioni2} and (i) show by induction that $h^0(\O_{X_i}(1)) = 1 + h^0(\O_{X_{i-1}}(1))=d-g+i$, that is (v). Finally, to see (vi), observe that $g-1=\frac{n-1}{n+2}d$ by Lemma \ref{posi}(i), hence $g \ge 2$ and (v) gives that $\frac{3d}{n+2} = h^0(\O_C(1)) \ge 3$, so that $d \ge n+2$. Moreover, if equality holds, we get that $g=n$ and $h^0(\O_X(1))=n+2$ by (v), hence $X \subset \P H^0(H) = \P^{n+1}$ is a hypersurface of degree $n+2$, so that $K_X=0$, contradicting Lemma \ref{posi}(ii). Hence (vi) is proved.
\end{proof}

\section{$T_X(k)$ Ulrich and special varieties in adjunction theory}
\label{sez7}

In this section we exclude some special varieties frequently arising in adjunction theory, under the hypothesis that $T_X(k)$ is an Ulrich vector bundle.
The cases $(X,H)=(\P^n,\O_{\P^n}(1)), (Q_n,\O_{Q_n}(1))$ have been already treated in Lemmas \ref{proj} and \ref{quad}.

We start by recalling the following (see \cite{bs, i}).

\begin{defi}
Let $E$ be an effective divisor on $(X,H)$. The divisor $E$ is called {\it exceptional}
\begin{itemize}
\item[(i)] {\it of type 1} if $(E,H_{|E})\cong (\P^{n-1},\O_{\P^{n-1}}(1))$ and $N_{E/X} \cong \O_{\P^{n-1}}(-1)$,
\item[(ii)] {\it of type 2} if $(E,H_{|E})\cong (\P^{n-1},\O_{\P^{n-1}}(1))$ and $N_{E/X} \cong \O_{\P^{n-1}}(-2)$, 
\item[(iii)] {\it of type 3} if $(E,H_{|E})\cong (Q_{n-1},\O_{Q_{n-1}}(1))$ and $N_{E/X} \cong \O_{Q_{n-1}}(-1)$,
\item[(iv)] {\it of type 4} if $(E,H_{|E})$ is a linear $\P^{n-2}$-bundle over a smooth curve $B$ and $(N_{E/X})_{|F} \cong \O_{\P^{n-2}}(-1)$, where $F$ is a fiber of the structure morphism $E \to B$. 
\end{itemize}
\end{defi}

Often these exceptional divisors will not be present under the condition that $T_X(k)$ is Ulrich. To see this we first prove

\begin{lemma}
\label{nogg} 
Let $W$ be a variety of dimension $s \ge 1$ and let $\O_W(1)$ be a very ample line bundle. Then $\Omega^1_W(1)$ is not globally generated if:
\begin{itemize}
\item[(i)] $(W,\O_W(1)) \cong(\P^s,\O_{\P^s}(1))$.
\item[(ii)]  $(W,\O_W(1))$ is a (possibly singular) quadric hypersurface in $\P^{s+1}$ and $s \ge 2$.
\item[(iii)] $(W,\O_W(1))$ is a smooth Del Pezzo variety, $s \ge 2$ and $(W,\O_W(1)) \not\in \{(\P^2,\O_{\P^2}(3)), (Q_2,\O_{Q_2}(2)),$ $(\P^3,\O_{\P^3}(2))\}$.    
\end{itemize}
\end{lemma}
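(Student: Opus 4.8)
The plan is to reduce all three cases to one elementary observation: \emph{if $W$ contains a line --- meaning a closed subscheme $\ell \subseteq W$ with $\ell \cong \P^1$ and $\O_W(1)_{|\ell} \cong \O_{\P^1}(1)$ --- then $\Omega^1_W(1)$ is not globally generated.} Indeed, the conormal exact sequence of the closed immersion $\ell \hookrightarrow W$ is right exact, so after tensoring it with the invertible sheaf $\O_W(1)_{|\ell}$ one obtains a surjection $\Omega^1_W(1)_{|\ell} \twoheadrightarrow \Omega^1_\ell(1) \cong \O_{\P^1}(-1)$. If $\Omega^1_W(1)$ were globally generated, then so would be its restriction to $\ell$, and hence also the quotient $\O_{\P^1}(-1)$; but a globally generated line bundle on $\P^1$ has nonnegative degree, a contradiction. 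I emphasize that this argument is insensitive to whether $\ell$ meets $\mathrm{Sing}(W)$ and to whether $W$ is reducible or nonreduced, which is exactly what makes the reduction flexible enough to cover the singular quadrics in (ii).

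It then suffices to produce a line in each of the three situations. In case (i), any line of $\P^s$ will do. In case (ii), a quadric hypersurface $W \subsetneq \P^{s+1}$ with $s \ge 2$ always contains a line $\ell$: if $W$ is smooth this holds because $s \ge 2$, so its variety of lines is nonempty (of dimension $2s-3$); if $W$ is singular it is a cone over a quadric of smaller dimension and contains lines through its vertex; and if $W$ is nonreduced one takes $\ell \subseteq W_{\mathrm{red}} \cong \P^s$. In every instance $\O_W(1)_{|\ell} = \O_{\P^{s+1}}(1)_{|\ell} \cong \O_{\P^1}(1)$, so the observation applies.

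For case (iii), note first that the standing hypothesis that $\O_W(1)$ be very ample forces the degree $d = \O_W(1)^s$ of the Del Pezzo variety to satisfy $d \ge 3$. If $s = 2$, then $W$ is a Del Pezzo surface in its anticanonical embedding, with $\O_W(1) = -K_W$ and $3 \le d = K_W^2 \le 9$; whenever $d \le 7$, or $d = 8$ and $W \cong \mathbb{F}_1$, the surface $W$ contains a $(-1)$-curve $\ell$, and by adjunction $\ell \cong \P^1$ with $\O_W(1) \cdot \ell = -K_W \cdot \ell = 1$, i.e.\ a line; the only surfaces not covered this way are $(\P^2, \O_{\P^2}(3))$ and $(\P^1 \times \P^1, \O_{\P^1 \times \P^1}(2,2)) = (Q_2, \O_{Q_2}(2))$, which are excluded. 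If $s \ge 3$, I would instead pass to a general $2$-dimensional linear section $S = W \cap H_1 \cap \cdots \cap H_{s-2}$ with $H_i \in |\O_W(1)|$: it is a smooth irreducible surface by Bertini, and since $-K_W = (s-1)\O_W(1)$, adjunction gives $K_S = (K_W + (s-2)\O_W(1))_{|S} = -\O_W(1)_{|S}$, so $S$ is a Del Pezzo surface of degree $d$. By Fujita's classification of Del Pezzo manifolds of dimension $\ge 3$, one has $d \le 8$, with $(\P^3, \O_{\P^3}(2))$ the only one of degree $8$; since that pair is excluded we obtain $d \le 7$, so $S$ is a Del Pezzo surface of degree $\le 7$, hence contains a $(-1)$-curve, which is a line of $S$ and therefore of $W$. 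Applying the observation completes the argument.

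The only input that is not purely formal is the classification invoked in case (iii) --- equivalently, the statement that $(\P^2, \O(3))$, $(Q_2, \O(2))$ and $(\P^3, \O(2))$ are precisely the smooth Del Pezzo varieties of dimension $\ge 2$ carrying no line --- for which I would appeal to Fujita's structure theory. The only point demanding care is the bookkeeping over degenerate quadrics in (ii) and low-degree Del Pezzo surfaces in (iii); I stress that one never needs a line disjoint from $\mathrm{Sing}(W)$, exactly because the conormal surjection $\Omega^1_W(1)_{|\ell} \twoheadrightarrow \Omega^1_\ell(1)$ is available for an arbitrary closed immersion $\ell \hookrightarrow W$.
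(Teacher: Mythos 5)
Your proof is correct, and for part (iii) it is essentially the paper's argument: there too one passes to a surface section, invokes Fujita's classification to exclude the listed pairs, finds a line $L$, and uses the surjection $\Omega^1_W(1) \to \Omega^1_L(1) \cong \O_{\P^1}(-1)$. Where you genuinely diverge is in (i) and (ii), which you fold into the same line-restriction principle. The paper instead handles (i) by noting that $\det(\Omega^1_{\P^s}(1)) = \O_{\P^s}(-1)$ has negative degree, and handles (ii) by a cohomological computation: the restricted Euler sequence gives $H^0({\Omega^1_{\P^{s+1}}}_{|W}(1))=0$, and then the twisted conormal sequence of the hypersurface, together with $H^1(\O_W(-1))=0$, forces $H^0(\Omega^1_W(1))=0$, so the (nonzero) sheaf cannot be globally generated. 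Your uniform reduction is more elementary and arguably more robust --- as you point out, the conormal surjection onto $\Omega^1_\ell(1)$ needs no transversality to $\mathrm{Sing}(W)$, and quotients and restrictions of globally generated sheaves stay globally generated --- at the cost of a small amount of case-checking to produce a line on every degenerate quadric and on every non-excluded Del Pezzo surface (the $d\ge 3$ bound from very ampleness, the $(-1)$-curves in degree $\le 7$, and $\mathbb{F}_1$ in degree $8$), all of which you carry out correctly. The paper's route for (ii) avoids exhibiting any line but requires $W$ to be a hypersurface so that the Euler and conormal sequences are available; the two arguments cover the same ground.
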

\begin{proof} 
(i) follows from $\det (\Omega^1_{\P^s}(1)) =\O_{\P^s}(-1)$. To see (ii), observe that the restricted Euler sequence
$$0 \to {\Omega^1_{\P^{s+1}}}_{|W}(1) \to H^0(\O_W(1)) \otimes \O_W \to \O_W(1) \to 0$$
implies that $H^0({\Omega^1_{\P^{s+1}}}_{|W}(1))=0$. Now the exact sequence
$$0 \to \O_{\P^{s+1}}(-3) \to \O_{\P^{s+1}}(-1) \to \O_W(-1) \to 0$$
implies that $H^1(\O_W(-1))=0$ and the dual normal bundle sequence
$$0 \to \O_W(-1) \to {\Omega^1_{\P^{s+1}}}_{|W}(1) \to \Omega^1_W(1) \to 0$$
gives that $H^0(\Omega^1_W(1))=0$, hence $\Omega^1_W(1)$ is not globally generated. Next, to see (iii), observe that, from the classification of Del Pezzo varieties \cite[Thm.~3.3.1]{ip} it follows, for the surface section $W_2$, that $(W_2,\O_{W_2}(1)) \not\in \{(\P^2,\O_{\P^2}(3)), (Q_2,\O_{Q_2}(2))$. Hence, as is well known, $W_2$, and hence $W$, contains a line $L$. But now the surjection $\Omega^1_W(1) \to \Omega^1_L(1)=\O_{\P^1}(-1)$ gives that $\Omega^1_W(1)$ is not globally generated. 
\end{proof}

Now 

\begin{lemma}
\label{noexc}
Let $X \subseteq \P^N$ be a smooth irreducible variety of dimension $n \ge 1$. Assume that $T_X(k)$ is an Ulrich vector bundle. We have: 
\begin{itemize}
\item[(i)] If $k \ge 1$ and $n \ge 2$, then $(X,H)$ does not contain any exceptional divisor of type 1.
\item[(ii)]  If $k \ge 2$ and $n \ge 2$, then $(X,H)$ does not contain any exceptional divisor of type 2.
\item[(iii)] If $k \ge 2$ and $n \ge 3$, then $(X,H)$ does not contain any exceptional divisors of types 3 or 4.
\end{itemize}
\end{lemma}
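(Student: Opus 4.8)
The plan is to use the global generation of $\Omega^1_X(K_X+(n+1-k)H)$, established in Lemma \ref{gg}, together with the non-global-generation statements collected in Lemma \ref{nogg}. The common mechanism in all three cases is the following: if $E \subseteq X$ is an exceptional divisor of one of the listed types, then $E$ is a smooth subvariety of $X$, so by Lemma \ref{gg} the sheaf $\Omega^1_E(K_{X|E}+(n+1-k)H_{|E})$ is globally generated. On the other hand, using the adjunction formula $K_E = (K_X+E)_{|E}$ and the explicit description of $N_{E/X}=\O_X(E)_{|E}$ for each type, I can compute $K_{X|E}$ in terms of $H_{|E}$ (and, in type 4, in terms of the fiber class), and then show that the resulting twist of $\Omega^1_E$ has degree (on a line, or on a ruling fiber) too negative — or rank-one determinant too negative — to be globally generated, contradicting Lemma \ref{nogg}. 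So the structure is: reduce to a numerical inequality on $k$ via adjunction, then invoke Lemma \ref{nogg}.

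First I would treat types 1, 2, 3, where $(E,H_{|E})$ is $(\P^{n-1},\O(1))$ or $(Q_{n-1},\O(1))$. Here $K_{X|E} = K_E - N_{E/X}$; for type 1, $K_{X|E}=\O_{\P^{n-1}}(-n) - \O_{\P^{n-1}}(-1)=\O_{\P^{n-1}}(-n+1)$, so $\Omega^1_E(K_{X|E}+(n+1-k)H_{|E}) = \Omega^1_{\P^{n-1}}(2-k)$; global generation of this forces $2-k \ge 1$ by Lemma \ref{nogg}(i) (more precisely, restricting to a line in $\P^{n-1}$, a globally generated sheaf must have nonnegative degree on that line, and $\Omega^1_{\P^{n-1}}(1)$ already fails, so we need $2-k\ge 2$, i.e. $k\le 0$), contradicting $k\ge1$. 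For type 2, $N_{E/X}=\O_{\P^{n-1}}(-2)$, giving $K_{X|E}=\O_{\P^{n-1}}(-n+2)$ and the twist $\Omega^1_{\P^{n-1}}(3-k)$; global generation would need $k\le 1$, contradicting $k\ge 2$. For type 3, $E=Q_{n-1}$, $K_E=\O_{Q_{n-1}}(-(n-1))$, $N_{E/X}=\O_{Q_{n-1}}(-1)$, so $K_{X|E}=\O_{Q_{n-1}}(-n+2)$ and the twist is $\Omega^1_{Q_{n-1}}(3-k)$; since $Q_{n-1}$ with $n-1\ge 2$ contains a line and $\Omega^1_{Q_{n-1}}(1)$ is not globally generated (Lemma \ref{nogg}(ii) applied to the quadric, or directly on the line), we again need $k\le 1$, a contradiction. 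In each case one should be slightly careful about whether the needed bound is $k \le 0$ or $k \le 1$; restricting to a line and using that a globally generated sheaf has nonnegative degree on every curve gives the sharp comparison, and it is enough here.

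Next, type 4: $E$ is a linear $\P^{n-2}$-bundle $\pi\colon E\to B$ over a smooth curve $B$, with $N_{E/X|F}=\O_{\P^{n-2}}(-1)$ for $F$ a fiber. Restricting everything to a fiber $F\cong\P^{n-2}$: $H_{|F}=\O_{\P^{n-2}}(1)$, $K_{E|F}=K_F=\O_{\P^{n-2}}(-(n-1))$ (since $F$ moves in a family over a curve, $N_{F/E}$ is trivial), so $K_{X|F}=K_{E|F}-N_{E/X|F}=\O_{\P^{n-2}}(-n+1)-\O_{\P^{n-2}}(-1)=\O_{\P^{n-2}}(-n+2)$. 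Therefore $(\Omega^1_E(K_{X|E}+(n+1-k)H_{|E}))_{|F}$ surjects onto $\Omega^1_F(K_{X|F}+(n+1-k)H_{|F}) = \Omega^1_{\P^{n-2}}(3-k)$. Since $n\ge 3$, $F=\P^{n-2}$ is a positive-dimensional projective space, and $\Omega^1_{\P^{n-2}}(1)$ is not globally generated (Lemma \ref{nogg}(i)), so global generation of the restriction to $F$ — which follows from global generation on $E$, which follows from Lemma \ref{gg} — forces $3-k\ge 2$, i.e. $k\le 1$, contradicting $k\ge 2$. This also handles type 3 and 4 simultaneously as claimed in (iii).

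The main obstacle I anticipate is bookkeeping rather than conceptual: getting the adjunction computation of $K_{X|E}$ exactly right in each of the four cases (in particular the dimension shifts, since $\dim E = n-1$ and one then restricts further to a line or a fiber $\P^{n-2}$), and making sure that the global generation I pull back from $X$ via Lemma \ref{gg} is genuinely being contradicted — i.e. that the bound produced ($k\le 0$ in type 1, $k\le 1$ in types 2,3,4) is incompatible with the stated hypothesis. One subtlety worth flagging: Lemma \ref{gg} gives global generation of $\Omega^1_Y(\dots)$ for any smooth $Y\subseteq X$; the exceptional divisors of types 1--4 are smooth (being $\P^{n-1}$, $Q_{n-1}$ smooth, or smooth $\P^{n-2}$-bundles), so this applies directly, and for type 4 one restricts further to the smooth subvariety $F\subseteq E\subseteq X$, which is again legitimate. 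A globally generated sheaf restricts to a globally generated sheaf, and on $\P^m$ ($m\ge1$) a globally generated sheaf has nonnegative degree on every line; since $\Omega^1_{\P^m}$ restricted to a line is $\O(-2)\oplus\O(-1)^{\oplus(m-1)}$, a twist $\Omega^1_{\P^m}(t)$ can be globally generated only if $t\ge 2$. This single numerical fact, combined with the adjunction computations, closes all cases.
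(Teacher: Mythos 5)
Your proposal is correct and follows essentially the same route as the paper: apply Lemma \ref{gg} to the smooth subvariety $E$ (or to a fiber $F$ in type 4), compute ${K_X}_{|E}$ by adjunction to identify the twist as $\Omega^1_{\P^{n-1}}(2-k)$, $\Omega^1_{\P^{n-1}}(3-k)$, $\Omega^1_{Q_{n-1}}(3-k)$, or $\Omega^1_{\P^{n-2}}(3-k)$, and contradict Lemma \ref{nogg}. Your explicit line-restriction argument (a globally generated twist $\Omega^1_{\P^m}(t)$ forces $t\ge 2$) is a slightly more careful way of handling the twists $t\le 0$ that the paper leaves implicit, but the substance is identical.
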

\begin{proof} 
Let $E$ be an exceptional divisor. It follows from Lemma \ref{gg} that $\Omega^1_E({K_X}_{|E}+(n+1-k)H_{|E})$ is globally generated. Now
$$\Omega^1_E({K_X}_{|E}+(n+1-k)H_{|E}) \cong \begin{cases} 
\Omega^1_{\P^{n-1}}(2-k) & \textrm{if $E$ is of type 1;} \\
\Omega^1_{\P^{n-1}}(3-k) & \textrm{if $E$ is of type 2;} \\
\Omega^1_{Q_{n-1}}(3-k) & \textrm{if $E$ is of type 3.}
\end{cases}$$
Further, when $E$ is of type 4, let $F$ be a fiber of the structure morphism of $E$. Again it follows from Lemma \ref{gg} that $\Omega^1_F({K_X}_{|F}+(n+1-k)H_{|F}) \cong \Omega^1_{\P^{n-2}}(3-k)$ is globally generated. Consequently, we draw the conclusions from Lemma \ref{nogg}.
\end{proof}

We now recall

\begin{defi}
\label{special}

We say that $(X,H)$ is a {\it linear $\P^k$-bundle} over a smooth variety $B$ if $(X, H) \cong (\P(\F), \O_{\P(\F)}(1))$, where $\F$ is a very ample vector bundle on $B$ of rank $k+1$.

We say that $(X,H)$ as above is a {\it scroll (respectively a quadric fibration; respectively a Del Pezzo fibration) over a normal variety $Y$ of dimension $m$} if there exists a surjective morphism with connected fibers  $\phi: X \to Y$ such that $K_X+(n-m+1)H=\phi^*\L$ (respectively $K_X+(n-m)H=\phi^*\L$; respectively $K_X+(n-m-1)H=\phi^*\L$), with $\L$ ample on $Y$.
\end{defi}

We now use the fibration to exclude several varieties as above, when $T_X(k)$ is an Ulrich vector bundle.

\begin{lemma}
\label{noadj}
Let $X \subseteq \P^N$ be a smooth irreducible variety of dimension $n \ge 1$. Assume that $T_X(k)$ is an Ulrich vector bundle. Let $f : X \to B$ be a fibration onto a normal variety $B$ of dimension $m \ge 1$, with general fiber $F$. Then:
\begin{itemize}
\item[(i)] If $m \le \min\{n-1, k+1\}$, then $(F,H_{|F}) \ne (\P^{n-m},\O_{\P^{n-m}}(1))$.
\item[(ii)] If $m \le \min\{n-2, k\}$, then $(F,H_{|F}) \ne (Q_{n-m},\O_{Q_{n-m}}(1))$.  
\item[(iii)] if $m \le \min\{n-2, k-1\}$, then $(F,H_{|F})$ is not a Del Pezzo variety, unless $(F,H_{|F}) \in \{(\P^2,\O_{\P^2}(3)), (Q_2,\O_{Q_2}(2)), (\P^3,\O_{\P^3}(2))\}$.
\end{itemize}
\end{lemma}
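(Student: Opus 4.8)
The plan is to deduce the three statements from Lemma \ref{gg}, applied to a general fiber $F$ of $f$, together with the non--global--generation results of Lemma \ref{nogg}.

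First I would pin down the geometry of $F$. Since we work over $\mathbb{C}$ and $B$ is normal, generic smoothness shows that a general fiber $F=f^{-1}(b)$, with $b$ in the smooth locus of $B$, is a smooth irreducible subvariety of $X$ of dimension $s:=n-m$, with normal bundle $N_{F/X}\cong f^*T_{B,b}\cong\O_F^{\oplus m}$; in particular $\det N_{F/X}\cong\O_F$, so by adjunction $K_F=(K_X)_{|F}$. Hence
$$\Omega^1_F\big((K_X)_{|F}+(n+1-k)H_{|F}\big)=\Omega^1_F\big(K_F+(n+1-k)H_{|F}\big),$$
and this bundle is globally generated by Lemma \ref{gg} applied with $Y=F$. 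Writing $H_F:=H_{|F}$, which is very ample on $F$, it remains to compute the twisting divisor $K_F+(n+1-k)H_F$ in each case and to contradict Lemma \ref{nogg}.

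Next I would carry out the three computations. If $(F,H_F)\cong(\P^s,\O_{\P^s}(1))$ then $K_F=-(s+1)H_F$, so $K_F+(n+1-k)H_F=(m-k)H_F$; if $(F,H_F)\cong(Q_s,\O_{Q_s}(1))$ then $K_F=-sH_F$, so the twist is $(m+1-k)H_F$; and if $(F,H_F)$ is a Del Pezzo variety, so that $-K_F=(s-1)H_F$ by definition (this is also what restriction to a contracted fiber of the relation $K_X+(n-m-1)H=\phi^*\L$ in Definition \ref{special} yields), then the twist is $(m+2-k)H_F$. In all three cases the standing hypothesis on $m$ forces the coefficient $j$ of $H_F$ to satisfy $j\le 1$. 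Since $H_F$ is very ample, $\O_F((1-j)H_F)$ is globally generated, hence so is
$$\Omega^1_F(jH_F)\otimes\O_F((1-j)H_F)=\Omega^1_F(H_F);$$
but $\Omega^1_F(H_F)$ is \emph{not} globally generated by Lemma \ref{nogg}(i) in case (i) (using $s=n-m\ge 1$), by Lemma \ref{nogg}(ii) in case (ii) (using $s=n-m\ge 2$), and by Lemma \ref{nogg}(iii) in case (iii) (using $s=n-m\ge 2$ together with the exclusion of precisely $(\P^2,\O_{\P^2}(3)),(Q_2,\O_{Q_2}(2)),(\P^3,\O_{\P^3}(2))$, which accounts for the ``unless'' clause). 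These contradictions give (i)--(iii).

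The only genuinely non--formal input is the description of a general fiber — smoothness and triviality of $N_{F/X}$, hence $K_F=(K_X)_{|F}$ — which is where characteristic $0$, via generic smoothness, is used. Everything after that is the bookkeeping of matching the coefficient $j\le 1$ to the three regimes $m\le k+1$, $m\le k$, $m\le k-1$, each intersected with the dimension constraint $n-m\ge 1$ or $n-m\ge 2$ needed to make the fiber type and Lemma \ref{nogg} meaningful; so I expect no serious obstacle beyond care with these bounds and with the Del Pezzo exceptional list.
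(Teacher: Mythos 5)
Your proof is correct and follows essentially the same route as the paper: apply Lemma \ref{gg} to a general fiber $F$ (using $K_F=(K_X)_{|F}$ since $N_{F/X}$ is trivial), compute that the resulting twist of $\Omega^1_F$ is $(m-k)H_F$, $(m+1-k)H_F$, or $(m+2-k)H_F$, and contradict Lemma \ref{nogg}. The only difference is that you make explicit two steps the paper leaves implicit — the triviality of the normal bundle of the general fiber and the reduction from a twist $j\le 1$ to the twist $1$ handled by Lemma \ref{nogg} — both of which are handled correctly.
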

\begin{proof}
We have that 
$$\Omega^1_F(K_F+(n+1-k)H_{|F}) \cong \begin{cases} 
\Omega^1_{\P^{n-m}}(m-k) & \textrm{if $(F,H_{|F}) = (\P^{n-m},\O_{\P^{n-m}}(1))$;} \\
\Omega^1_{Q_{n-m}}(m-k+1) & \textrm{if $(F,H_{|F}) = (Q_{n-m},\O_{Q_{n-m}}(1))$;} \\
\Omega^1_{F}(m-k+2) & \textrm{if $(F,H_{|F})$ is a Del Pezzo variety.}
\end{cases}$$
Now $\Omega^1_F(K_F+(n+1-k)H_{|F})$ is globally generated by Lemma \ref{gg}. Hence, Lemma \ref{nogg} gives that, in each of the three cases, the inequality in $m, n, k$ is not satisfied.
\end{proof}

We get a very useful consequence.

\begin{lemma}
\label{posi2} 

Let $X \subseteq \P^N$ be a smooth irreducible variety of dimension $n \ge 2$. If $T_X(k)$ is an Ulrich vector bundle, then $K_X+(n-1)H$ is nef and $H^0(K_X+(n-1)H) \ne 0$, unless $(X,H,k)=(\P^2,\O_{\P^2}(2),0)$ (the latter case actually occurs, see Theorem \ref{k=0}).
\end{lemma}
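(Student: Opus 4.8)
The plan is to treat nefness and effectiveness separately, after a reduction. Since $n\ge 2$, Lemma \ref{coh}(i) gives $k\ge 0$; if $k=0$ then Theorem \ref{k=0} forces $(X,H)=(\P^2,\O_{\P^2}(2))$, the stated exception, so I may assume $k\ge 1$, and then $g\ge 2$ by Lemma \ref{posi}(i). For nefness I would argue by contradiction: if $K_X+(n-1)H$ is not nef, then the adjunction-theoretic classification of polarized manifolds whose $(n-1)$-st adjoint bundle is not nef (see e.g.\ \cite[Ch.~7]{bs}) shows that $(X,H)$ is one of $(\P^n,\O_{\P^n}(1))$, $(Q_n,\O_{Q_n}(1))$, a scroll over a smooth curve, or---only if $n=2$---$(\P^2,\O_{\P^2}(2))$. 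The first two are excluded by Lemmas \ref{proj} and \ref{quad}; a scroll over a smooth curve has base dimension $m=1\le\min\{n-1,k+1\}$ and general fibre $(\P^{n-1},\O_{\P^{n-1}}(1))$, excluded by Lemma \ref{noadj}(i); and the last has $k=0$ by Theorem \ref{k=0}, against $k\ge 1$. Hence $K_X+(n-1)H$ is nef.

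For effectiveness, since $(n-1)H$ is ample, Kawamata--Viehweg vanishing gives $h^i(K_X+(n-1)H)=0$ for $i>0$, so $h^0(K_X+(n-1)H)=\chi(K_X+(n-1)H)$. By \eqref{can}, $(K_X+(n-1)H)_{|X_i}=K_{X_i}+(i-1)H_{|X_i}$ and $(K_X+(n-2)H)_{|X_i}=K_{X_i}+(i-2)H_{|X_i}$; for $3\le i\le n$ the latter is $K_{X_i}$ plus an ample divisor, so $H^1(\O_{X_i}(K_X+(n-2)H))=0$ by Kodaira vanishing and the restriction $H^0(\O_{X_i}(K_X+(n-1)H))\to H^0(\O_{X_{i-1}}(K_X+(n-1)H))$ is surjective. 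Iterating down to the surface section $S=X_2$ gives $h^0(K_X+(n-1)H)\ge h^0(\O_S(K_S+H_{|S}))=\chi(\O_S)+g-1$, the last equality by Kawamata--Viehweg and Riemann--Roch on $S$ together with $(K_S+H_{|S})H_{|S}=2g-2$. Since $q(X)\le g$ by the Lefschetz hyperplane theorem and $\chi(\O_S)=1-q(S)+p_g(S)\ge 1-q(X)$ (using $q(S)=q(X)$, Lemma \ref{q}), we obtain $h^0(K_X+(n-1)H)\ge g-q(X)\ge 0$, with equality only if $q(X)=g$ and $p_g(S)=0$.

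The crux is therefore to exclude the case $q(X)=g$, $p_g(S)=0$. There $h^0(\Omega^2_X)\le p_g(S)=0$ by the Lefschetz theorem, so all products of two holomorphic $1$-forms on $X$ vanish; since $q(X)=g\ge 2$, the image of the Albanese map of $X$ is a curve and the Castelnuovo--de Franchis theorem yields a fibration $f\colon X\to B$ onto a smooth curve with $f^*H^0(\Omega^1_B)=H^0(\Omega^1_X)$, hence $g(B)=q(X)=g$. A general fibre $F$ has $\dim F=n-1$, the general curve section $C$ maps onto $B$ with degree $e=H^{n-1}\!\cdot F=(H_{|F})^{n-1}\ge 1$, and Hurwitz's inequality $2g-2=2g(C)-2\ge e(2g(B)-2)=e(2g-2)$ forces $e=1$. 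Thus $F$ has degree $1$ under the very ample $H_{|F}$, i.e.\ $(F,H_{|F})\cong(\P^{n-1},\O_{\P^{n-1}}(1))$, contradicting Lemma \ref{noadj}(i). So $h^0(K_X+(n-1)H)>0$. The main obstacle is precisely this last step, i.e.\ the effectiveness when $q(X)\neq 0$; everything else is a routine combination of adjunction theory, Kawamata--Viehweg vanishing, and the earlier lemmas.
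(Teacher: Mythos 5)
Your proof is correct, but the effectiveness half takes a genuinely different route from the paper. The nefness half coincides with the paper's argument: the same non-nef classification from \cite[Ch.~7]{bs}, with the same exclusions via Lemmas \ref{proj}, \ref{quad}, \ref{noadj}(i), and the observation that the residual case $(\P^2,\O_{\P^2}(2))$ has $g=0$, hence $k=0$. For the nonvanishing, however, the paper simply quotes \cite[Cor.~7.2.8]{bs}, which says that $H^0(K_X+(n-1)H)\ne 0$ if and only if $K_X+(n-1)H$ is nef, so effectiveness comes for free from nefness; you instead prove it directly: Kodaira vanishing reduces it to $\chi(K_X+(n-1)H)$, the restriction chain down to the surface section gives $h^0(K_X+(n-1)H)\ge \chi(\O_S)+g-1\ge g-q(X)+p_g(S)$, and the boundary case $q(X)=g\ge 2$, $p_g(S)=0$ is eliminated by the Lefschetz theorem, Castelnuovo--de Franchis/Albanese and Hurwitz, which produce a fibration over a curve of genus $g$ whose general fiber is $(\P^{n-1},\O_{\P^{n-1}}(1))$, contradicting Lemma \ref{noadj}(i). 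The steps check out; in particular the Hurwitz step genuinely needs $g(B)=q(X)=g$ (not just $g(B)\ge 2$), and your Albanese argument does supply this. What the paper's route buys is brevity and a single use of the Ulrich hypothesis (only in the non-nef classification); what your route buys is a self-contained proof of the nonvanishing that bypasses the adjunction-theoretic equivalence of \cite[Cor.~7.2.8]{bs}, at the price of heavier transcendental input and a second use of the Ulrich hypothesis through Lemma \ref{noadj}(i).

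One caveat: your reduction to $k\ge 1$ invokes Theorem \ref{k=0}. This is logically admissible, since Theorem \ref{k=0} is quoted from \cite{bmpt}, but the paper later uses Lemma \ref{posi2} precisely to give its alternative proof of Theorem \ref{k=0}, which would become circular if your proof were substituted. The dependence is easy to remove: do not separate off $k=0$ at the start; in the non-nef branch the classification already outputs $(\P^2,\O_{\P^2}(2))$ with $k=0$ by Lemma \ref{posi}(i), and in the nef branch one has $2g-2=(K_X+(n-1)H)H^{n-1}\ge 0$, so $g\ge 2$ by Lemma \ref{posi}(i), which is all your effectiveness argument needs (Lemma \ref{noadj}(i) only requires $k\ge 0$, guaranteed by Lemma \ref{coh}(i)).
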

\begin{proof}
Recall that $H^0(K_X+(n-1)H) \ne 0$ if and only if $K_X+(n-1)H$ is nef by \cite[Cor.~7.2.8]{bs}. Now if $K_X+(n-1)H$ is not nef, it follows by \cite[Prop.'s ~7.2.2, 7.2.3 and 7.2.4]{bs} that $(X,H)$ is either $(\P^n,\O_{\P^n}(1)), (Q_n,\O_{Q_n}(1))$, a linear $\P^{n-1}$-bundle over a smooth curve or $(\P^2,\O_{\P^2}(2))$. The first three cases are excluded by Lemmas \ref{proj}, \ref{quad} and \ref{noadj}(i), while in the fourth case we have $g=0$, hence $k=0$ by Lemma \ref{posi}(i). 
\end{proof}

We can now prove Theorem \ref{k=0}.

\renewcommand{\proofname}{Proof of Theorem \ref{k=0}}
\begin{proof} The assert is clear if either $(X,H)=(\P^1,\O_{\P^1}(3))$ or $(\P^2,\O_{\P^2}(2))$. Vice versa assume that $T_X$ is Ulrich for $H$. If $n=1$, since $T_X=-K_X$ is globally generated by Lemma \ref{ulr}(vi), we have that $X$ is either $\P^1$ or an elliptic curve. Now the latter is excluded by Lemma \ref{posi}(i), while in the former case $T_X=\O_{\P^1}(2)$ Ulrich implies that $H=\O_{\P^1}(3)$. Now assume that $n \ge 2$. Then Lemma \ref{posi2} gives that either $(X,H)=(\P^2,\O_{\P^2}(2))$, or $K_X+(n-1)H$ is nef, leading, by Lemma  \ref{posi}(ii), to the contradiction 
$$0 \le (K_X+(n-1)H)H^{n-1}=-\frac{2d}{n+2}.$$ 
\end{proof}
\renewcommand{\proofname}{Proof}

The following result will also be useful.

\begin{lemma}
\label{notbundle}
Let $X \subseteq \P^N$ be a smooth irreducible variety of dimension $n \ge 2$. Suppose that $k \ge 1$ and that $T_X(k)$ is an Ulrich vector bundle. Assume that $X \cong \P(\F)$ is a projective bundle over a normal projective variety $B$ of dimension $1 \le m \le n-1$. Then $B$ is smooth and $\F$ is simple. In particular, if $m=1$, then $q(X) \ne 0$.
\end{lemma}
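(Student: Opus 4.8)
The plan is to establish the three assertions in turn --- $B$ smooth, then $\F$ simple, then $q(X)\neq0$ when $m=1$ --- with the Ulrich hypothesis entering essentially only through Lemma \ref{coh}(iii).

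First, smoothness of $B$ is formal and uses nothing special about Ulrich bundles. Since $\F$ is locally free, $\pi\colon X=\P(\F)\to B$ is a smooth, hence faithfully flat and quasi-compact, morphism, so there is an exact sequence $0\to\pi^*\Omega^1_B\to\Omega^1_X\to\Omega^1_{X/B}\to0$ with $\Omega^1_{X/B}$ locally free. As $X$ is smooth, $\Omega^1_X$ is locally free, hence so is its subsheaf $\pi^*\Omega^1_B$ (the kernel of a surjection of vector bundles onto a vector bundle); by faithfully flat descent $\Omega^1_B$ is then locally free of rank $m$, so $B$ is smooth. (Normality of $B$ plays no role here beyond reducedness.)

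The main step is showing $\F$ is simple, which I would do by identifying $\End(\F)$ with a cohomology group on $X$ and then invoking $H^0(T_X)=0$. Using the Grothendieck convention $\pi_*\O_{\P(\F)}(1)=\F$, the projection formula gives $\pi_*\big(\pi^*\F^\vee\otimes\O_{\P(\F)}(1)\big)=\F^\vee\otimes\F$, so $H^0\big(X,\pi^*\F^\vee(1)\big)=\Hom(\F,\F)=\End(\F)$. The relative Euler sequence $0\to\O_X\to\pi^*\F^\vee\otimes\O_{\P(\F)}(1)\to T_{X/B}\to0$ then yields the exact sequence
$$0\to H^0(\O_X)\to\End(\F)\to H^0(T_{X/B}).$$
Now $T_{X/B}\hookrightarrow T_X$ via the relative tangent sequence $0\to T_{X/B}\to T_X\to\pi^*T_B\to0$ (exact since $\pi$ and $B$ are smooth), hence $H^0(T_{X/B})\subseteq H^0(T_X)$; and $H^0(T_X)=0$ by Lemma \ref{coh}(iii) because $k\geq1$. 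Therefore the map $H^0(\O_X)\to\End(\F)$ is an isomorphism, so $\End(\F)\cong H^0(\O_X)=\C$, i.e.\ $\F$ is simple. (With the opposite $\P(\F)$-convention one runs the identical argument with $\F$ in place of $\F^\vee$, and the conclusion is unchanged.)

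Finally, suppose $m=1$, so $B$ is a smooth projective curve and $\rk\F=n-m+1=n\geq2$. If $B\cong\P^1$, Grothendieck's splitting theorem forces $\F\cong\bigoplus_{i=1}^n\O_{\P^1}(a_i)$, and then $\dim\End(\F)\geq n\geq2$ (the diagonal homotheties alone span an $n$-dimensional subspace), contradicting simplicity; hence $g(B)\geq1$. Since the fibres of $\pi$ are projective spaces, $R^i\pi_*\O_X=0$ for $i>0$ and $\pi_*\O_X=\O_B$, so the Leray spectral sequence gives $q(X)=h^1(\O_X)=h^1(\O_B)=g(B)\geq1$. I do not expect a genuine obstacle here: the one input that makes the argument run, $H^0(T_X)=0$, is already handed to us by the Ulrich hypothesis via Lemma \ref{coh}(iii), after which simplicity of $\F$ falls out of the Euler sequence and the $q(X)\neq0$ statement is pure bundle theory on curves; the only point requiring care is keeping the $\P(\F)$-sign convention consistent.
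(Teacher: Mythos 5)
Your proof is correct and follows essentially the same route as the paper: the relative Euler sequence $0\to\O_X\to\pi^*\F^\vee\otimes\xi\to T_{X/B}\to0$ combined with $H^0(T_{X/B})\subseteq H^0(T_X)=0$ (from Lemma \ref{coh}(iii)) to get $\End(\F)\cong H^0(\O_X)=\C$, and Grothendieck splitting over $\P^1$ to rule out $q(X)=0$ when $m=1$. The only divergence is the smoothness of $B$, where the paper twists $\F$ to make $\xi$ ample and cites \cite[Prop.~3.2.1]{bs}, while you give a self-contained descent argument via the locally split sequence $0\to\pi^*\Omega^1_B\to\Omega^1_X\to\Omega^1_{X/B}\to0$; both are fine.
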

\begin{proof} 
Let $\pi : X \cong \P(\F) \to B$ be the structure morphism and let $\xi$ be the tautological bundle of $\P(\F)$. By twisting $\F$ with a sufficiently ample line bundle we can assume that $\xi$ is ample. Then \cite[Prop.~3.2.1]{bs} implies that $B$ is smooth. Since $H^0(T_X)=0$, the cohomology of the exact sequence
$$0 \to T_{X/B} \to T_X \to \pi^*T_B \to 0$$
gives that $H^0(T_{X/B})=0$. Now the cohomology of the exact sequence
$$0 \to \O_X \to \pi^*\F^* \otimes \xi \to T_{X/B} \to 0$$
implies that
$$h^0(\F \otimes \F^*)= h^0(\pi^*\F^* \otimes \xi)=h^0(\O_X)=1.$$
Now if $m=1$ and $q(X)=0$ we have that $B \cong \P^1$, hence $\F$ cannot be simple since $\rk \F = n \ge 2$. 
\end{proof}

Next we prove three results for $k=2$. 

For the first one, in order to apply the results of T. Fujita in \cite{f1}, we give the following definition, that coincides with the one in \cite{f1} when $B$ is smooth.

\begin{defi}
\label{minimal}
Let $f: X \to B$ be a fibration over a curve, $L$ an ample line bundle on $X$ such that on the general fiber $F$ we have that $K_F = -(n-2)L_{|F}$. We say that $f$ is {\it minimal} if there is a line bundle $\L$ on $B$ such that $K_X+(n-2)L=f^*\L$.
\end{defi}

Then we have

\begin{lemma}
\label{DP4}
Let $X \subseteq \P^N$ be a smooth irreducible variety of dimension $n \ge 4$. Suppose that $k \ge 2$ and that $k=2$ if $n=4$. Moreover assume that $T_X(k)$ is an Ulrich vector bundle. Then:
\begin{itemize}
\item [(i)] $(X,H)$ is not a Del Pezzo fibration over a smooth curve.
\item [(ii)] If $n=4$ and $K_X+2H$ is ample, then $(X,K_X+2H)$ is not a minimal $(\P^3,\O_{\P^3}(2))$-fibration over a smooth curve.
\end{itemize}
\end{lemma}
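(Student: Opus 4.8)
The plan is to use the global generation of $\Omega^1_X(K_X+(n+1-k)H)$ from Lemma \ref{gg}, applied to the general fiber $F$ of the fibration, and to combine it with the classification results of Fujita on Del Pezzo fibrations and on quadric-bundle-like fibrations over curves. First I would treat part (i). Suppose $(X,H)$ is a Del Pezzo fibration $\phi: X \to B$ over a smooth curve, so that $K_X+(n-1)H=\phi^*\L$ with $\L$ ample and the general fiber $(F,H_{|F})$ is a Del Pezzo variety of dimension $n-1$. Since $k \ge 2$ (and $n \ge 4$, so $n-2 \ge k-1$ when $k=2$; in general the hypothesis forces $m=1 \le \min\{n-2,k-1\}$), Lemma \ref{noadj}(iii) already says that the only surviving possibility for $(F,H_{|F})$ is one of $(\P^2,\O_{\P^2}(3))$, $(Q_2,\O_{Q_2}(2))$, $(\P^3,\O_{\P^3}(2))$. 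The first two have dimension $2$, forcing $n=3$, contrary to $n \ge 4$; so the only case left is $(F,H_{|F})=(\P^3,\O_{\P^3}(2))$ with $n=4$. But that is exactly the case $n=4$, $k=2$, which is the content of part (ii) — so for part (i) it suffices to dispose of the case handled in (ii), i.e. (i) reduces to (ii).

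For part (ii), assume $n=4$, $k=2$, $K_X+2H$ ample, and suppose for contradiction that $(X,K_X+2H)$ is a minimal $(\P^3,\O_{\P^3}(2))$-fibration $f: X \to B$ over a smooth curve, in the sense of Definition \ref{minimal} with $L=K_X+2H$. I would first extract the numerical constraints: on the general fiber $F\cong\P^3$ we have $K_F=-2L_{|F}$, i.e. $H_{|F}=\O_{\P^3}(2)$ and $L_{|F}=\O_{\P^3}(1)$, so $(K_X+2H)_{|F}=\O_{\P^3}(1)$ while $K_{F}=(K_X+4H)_{|F}$ forces $(K_X)_{|F}=\O_{\P^3}(-4)$, consistent with $H_{|F}=\O_{\P^3}(2)$. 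Next, minimality gives $K_X+2L=f^*\M$ for some line bundle $\M$ on $B$, i.e. $3K_X+4H=f^*\M$, so $K_X$ restricted to fibers is as computed, and $3K_X+4H$ is a pullback from $B$. Now I would invoke Fujita's structure theory for such fibrations (the relevant result from \cite{f1}): a minimal $(\P^3,\O_{\P^3}(2))$-fibration over a smooth curve is, up to the usual identifications, of the form $\P(\cE)\to B$ for a rank-$4$ bundle $\cE$ with $L=\O_{\P(\cE)}(1)$, and $H=2L-f^*\cN$ for a suitable line bundle $\cN$ on $B$; in particular $X$ is a projective bundle over the curve $B$. Then Lemma \ref{notbundle} applies: $\cE$ must be simple and $q(X)\ne 0$. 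I would then derive a contradiction by combining $q(X)\ne 0$ with Lemma \ref{coh}(vi) (which gives $H^0(K_X+(n-k)H)=H^0(K_X+2H)=0$), against the ampleness hypothesis on $K_X+2H$ — an ample divisor on a variety with positive irregularity has sections after tensoring suitably, and in fact here $K_X+2H=L$ is $f$-ample with $L_{|F}=\O_{\P^3}(1)$, so $f_*L=\cE$ has rank $4$ and, pushing to $B$, $h^0(L)=h^0(B,\cE\otimes\text{(twist)})$, which must be nonzero once the twist from $H=2L-f^*\cN$ is accounted for, since $L$ is globally generated (it is $\O_{\P(\cE)}(1)$ with $\cE$ sufficiently positive after the reduction). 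This contradicts $H^0(K_X+2H)=0$.

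I expect the main obstacle to be pinning down the precise form of Fujita's classification of minimal $(\P^3,\O_{\P^3}(2))$-fibrations over a curve and extracting from it the statement that $X$ is genuinely a $\P^3$-bundle over $B$ with $H=2\xi-f^*\cN$, so that Lemma \ref{notbundle} is applicable and the numerical bookkeeping linking $H^0(K_X+2H)$ to sections of a bundle on $B$ goes through cleanly. A secondary subtlety is making the reduction of (i) to (ii) airtight: I must be sure that the dimension count in Lemma \ref{noadj}(iii) really leaves only $(\P^3,\O_{\P^3}(2))$ with $n=4$, and that the hypothesis "$k=2$ if $n=4$" is exactly what is needed to have $m=1\le k-1$ when we are in dimension $4$ — for $n\ge 5$ we have $k-1\ge 1$ automatically from $k\ge 2$, so Lemma \ref{noadj}(iii) applies without the extra hypothesis, and the Del Pezzo fiber cases of dimension $\ge 4$ are simply excluded outright, leaving nothing to prove beyond (ii).
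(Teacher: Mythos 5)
Your use of Lemma \ref{noadj}(iii) to kill part (i) for $n\ge 5$ and to reduce the general fiber to $(\P^3,\O_{\P^3}(2))$ with $n=4$ is correct and matches the paper. But from that point on there are two genuine gaps.

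First, the claimed reduction of (i) to (ii) is false. In case (i) with $n=4$ the fibration is a Del Pezzo fibration \emph{with respect to $H$}: the general fiber satisfies $H_{|F}=\O_{\P^3}(2)$ and $(K_X)_{|F}=K_F=\O_{\P^3}(-4)$, hence $(K_X+2H)_{|F}=\O_F$ is trivial. So $(X,K_X+2H)$ is not a $(\P^3,\O_{\P^3}(2))$-fibration (there one needs $(K_X+2H)_{|F}=\O_{\P^3}(2)$, i.e.\ $H_{|F}=\O_{\P^3}(3)$ --- note also that your computation $L_{|F}=\O_{\P^3}(1)$ in (ii) is off), and the ampleness hypothesis of (ii) fails in case (i). The two cases must be run in parallel, as the paper does by setting $L=H$ in (i) and $L=K_X+2H$ in (ii) and observing only that $(F,L_{|F})$ is the same polarized fiber in both.

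Second, and more seriously, the step you flag as ``the main obstacle'' is exactly the content of the proof and cannot be waved through: knowing the \emph{general} fiber is $(\P^3,\O_{\P^3}(2))$ does not make $X$ a projective bundle over $B$; there could be reducible or otherwise degenerate special fibers. The paper first proves that \emph{every} fiber is irreducible, by invoking Fujita's classification of singular fibers of minimal Del Pezzo fibrations (\cite[Table (2.20)]{f1}): the only surviving degeneration has an irreducible component $D$ that is a scroll over $\P^1$ with plane fibers $F'$ satisfying $L_{|F'}=\O_{\P^2}(1)$, and this is excluded numerically in case (ii) ($\O_{\P^2}(1)=(K_X+2H)_{|F'}=\O_{\P^2}(2a-2)$ has no integer solution) and via global generation of $\Omega^1_{\P^2}(1)$ (Lemmas \ref{gg} and \ref{nogg}(i)) in case (i). Only then does \cite[(4.8)]{f1} give that every fiber is $\P^3$, hence that $X$ is a $\P^3$-bundle over $B$. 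Finally, your endgame is both circuitous and shaky: once $X$ is a projective bundle over a curve, Lemma \ref{notbundle} forces $q(X)\ne 0$, which directly contradicts $q(X)=0$ from Lemma \ref{posi}(iii) (as $k=2<\tfrac{n+1}{2}$); there is no need to argue that $h^0(K_X+2H)\ne 0$ via pushforwards, and indeed an ample rank-$4$ bundle on a curve of positive genus need not have sections, so that route does not close.
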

\begin{proof} 
For the sake of contradiction, let $L$ be $H$ in case (i) and $K_X+2H$ in case (ii). Assume that we have a fibration $f: X \to B$ over a smooth curve $B$ such that $(X,L)$ is a Del Pezzo fibration in case (i) (see Definition \ref{special}) and $(X,L)$ is a minimal $(\P^3,\O_{\P^3}(2))$-fibration in case (ii). Note that $f$ is minimal also in case (i) by Definition \ref{special}.

Let $F$ be a general fiber of $f$. In case (i) we have that $F$ is a smooth variety of dimension $n-1$ and $K_F = {K_X}_{|F}=-(n-2)L_{|F}$, hence $F$ is a Del Pezzo variety. Since $L=H$, Lemma \ref{noadj} implies that $(F,H_{|F}) = (\P^3,\O_{\P^3}(2))$, hence $n=4$. Thus $(F,L_{|F})$ is the same in both cases.

We now claim that every fiber of $f$ 
is irreducible. 
Indeed, if not, let $F_0$ be a reducible fiber. Since $F_0$ is connected, it must be singular, hence we can apply \cite[Table (2.20)]{f1}. It follows that we are in case (2.17) of \cite[Table (2.20)]{f1}, the degree of $F_0$ is $8$ and, if $D$ is an irreducible component of $F_0$, then $(D,L_{|D})$ is a scroll over $\P^1$ and ${K_X}_{|D}=-2L_{|D}$. Denoting a fiber of the structure morphism $D \to \P^1$ by $F' \cong \P^2$, we obtain $L_{|F'} = \O_{\P^2}(1)$ and ${K_X}_{|F'} = -2L_{|F'}=\O_{\P^2}(-2)$. Set $H_{|F'} = \O_{\P^2}(a)$. In case (ii) we have that 
$$\O_{\P^2}(1) = L_{|F'} = (K_X+2H)_{|F'} = \O_{\P^2}(-2+2a)$$
a contradiction. In case (i) we have that $L=H$ and $a=1$. But now Lemma \ref{gg} gives that $\Omega^1_{\P^2}({K_X}_{|F'}+3H_{|F'}) \cong \Omega^1_{\P^2}(1)$ is globally generated, contradicting Lemma \ref{nogg}(i). Thus every fiber of $f$ is irreducible.

Now \cite[(4.8)]{f1} implies that every fiber of $f$ is $\P^3$. Since $B$ is a smooth curve, it follows, as is well known, that $X$ is a projective bundle over $B$. On the other hand, since $n=4$, we have that $k=2$ and $q(X)=0$ by Lemma \ref{posi}(iii), contradicting Lemma \ref{notbundle}.
\end{proof}

\begin{lemma}
\label{noqf4}
Let $X \subseteq \P^N$ be a smooth irreducible variety of dimension $4$. Suppose that $K_X+2H$ is ample and that $T_X(2)$ is an Ulrich vector bundle. Then $(X,K_X+2H)$ is not a quadric fibration over a smooth curve.
\end{lemma}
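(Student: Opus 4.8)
The plan is to argue by contradiction, assuming $\phi: X \to B$ is a quadric fibration over a smooth curve $B$, so that $K_X + 3H = \phi^*\L$ for some ample line bundle $\L$ on $B$ (here $n=4$, $m=1$, so $n-m=3$ and $K_X + (n-m)H = \phi^*\L$). The general fiber $F$ is then a smooth quadric threefold $Q_3 \subset \P^4$ with $H_{|F} = \O_{Q_3}(1)$. First I would invoke Lemma \ref{noadj}(ii): with $m=1$, $n=4$, $k=2$ we have $m \le \min\{n-2,k\} = \min\{2,2\} = 2$, so the general fiber cannot be $(Q_{n-m},\O_{Q_{n-m}}(1)) = (Q_3,\O_{Q_3}(1))$. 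This immediately contradicts the structure of a quadric fibration and should close the argument.

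The main subtlety is that Definition \ref{special} of a quadric fibration only constrains the general fiber, so one must make sure the hypothesis $K_X + 3H = \phi^*\L$ genuinely forces $(F, H_{|F}) = (Q_3, \O_{Q_3}(1))$ for general $F$. Restricting $K_X + 3H = \phi^*\L$ to a general (hence smooth, irreducible, $3$-dimensional) fiber $F$ gives $K_F + 3H_{|F} = (K_X + 3H)_{|F} = (\phi^*\L)_{|F} = \O_F$, i.e. $K_F = -3H_{|F}$, so $(F, H_{|F})$ is a Fano threefold of index $3$ with $H_{|F}$ very ample of the minimal possible degree; by the classification of such (del Pezzo-type / Mukai) varieties this forces $(F, H_{|F}) = (Q_3, \O_{Q_3}(1))$. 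Then Lemma \ref{noadj}(ii) applies verbatim and yields the contradiction. Where the hypothesis $K_X + 2H$ ample enters is only to keep us inside the birational setup of adjunction theory and to ensure $B$ is a genuine curve rather than a point — I would note it is used to guarantee $\phi$ is the adjunction morphism rather than an accident; it is not otherwise needed for the cohomological contradiction.

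An alternative, purely numerical route avoids the classification: on a general fiber $F$ the relation $K_F = -3H_{|F}$ together with $\dim F = 3$ and $H_{|F}$ very ample already places $(F,H_{|F})$ among $(\P^3,\O(?))$ or $(Q_3,\O(1))$; the $\P^3$ case would make $X$ a $\P^3$-bundle over $B$, excluded exactly as in the proof of Lemma \ref{DP4} via Lemma \ref{notbundle} (since $q(X) = 0$ by Lemma \ref{posi}(iii), as $k = 2 < \frac{n+1}{2} = \frac{5}{2}$), while the $Q_3$ case is excluded by Lemma \ref{noadj}(ii). I expect the only real obstacle to be bookkeeping: confirming that the "general fiber" hypotheses in Lemma \ref{noadj} and the smoothness of the generic fiber of $\phi$ line up, and that the index computation $K_F = -(n-m)H_{|F} = -3 H_{|F}$ is exactly the del Pezzo-fibration-adjacent case already handled. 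Since both the projective-bundle case and the quadric-fibration case are disposed of by results proved earlier in the excerpt, the write-up should be short.
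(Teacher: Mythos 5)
Your argument rests on a misreading of the statement. The lemma asserts that $(X,K_X+2H)$ is not a quadric fibration, i.e.\ the fibration structure is taken with respect to the adjoint polarization $L=K_X+2H$ (this is the situation arising from the second reduction in Case C of the proof of Theorem \ref{k=2}), not with respect to $H$. Per Definition \ref{special} the hypothesis is $K_X+3L=4K_X+6H=\phi^*\L$, so on a general fiber $F$ one gets $K_F=-3L_{|F}$ and hence $(F,L_{|F})\cong(Q_3,\O_{Q_3}(1))$ --- but then $H_{|F}=\O_{Q_3}(2)$, since writing $H_{|F}=\O_{Q_3}(a)$ gives $L_{|F}=\O_{Q_3}(2a-3)=\O_{Q_3}(1)$, i.e.\ $a=2$. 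Consequently Lemma \ref{noadj}(ii) does not apply: that lemma excludes $(F,H_{|F})=(Q_3,\O_{Q_3}(1))$, and the underlying mechanism (Lemma \ref{gg} plus Lemma \ref{nogg}(ii)) gives nothing here because $\Omega^1_F(K_F+(n+1-k)H_{|F})=\Omega^1_{Q_3}(-3+3\cdot 2)=\Omega^1_{Q_3}(3)$ \emph{is} globally generated. The same confusion undermines your ``alternative route'': the $\P^3$ case does not arise for index reasons, and the $Q_3$ case is exactly the one you cannot dispose of. (If the fibration really were with respect to $H$, the lemma would be redundant --- that case is already killed by Lemma \ref{noadj}(ii), as in Case (B.3) of the proof of Theorem \ref{k=2}.)

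The paper's actual proof is an entirely different, numerical argument: it uses Lanteri's formulas for quadric fibrations to express $(K_X+2H)^4$ and $K_X^i(K_X+2H)^{4-i}$ in terms of $e=\sum a_i$ (where $\pi_*(K_X+2H)\cong\bigoplus\O_{\P^1}(a_i)$) and an auxiliary integer $b$, solves for $K_XH^3$ and $d=H^4$, and combines this with $K_XH^3=-\tfrac{2}{3}d$ from Lemma \ref{posi}(ii) to get $13d=48(e+2)$. The Ulrich vanishing $H^4(T_X(-2))=0$, pushed through the relative tangent sequence, forces $a_i\le 1$ and hence $e\le 5$, which is incompatible with $13\mid(e+2)$. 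Some input of this quantitative kind is unavoidable, so your proposal has a genuine gap rather than just a presentational one.
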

\begin{proof} 
Suppose that $(X,K_X+2H)$ is a quadric fibration $\pi : X \to B$ over a smooth curve. Since $\chi(\O_X)=1$ by Lemma \ref{posi}(iii), it follows from \cite[Sect.~1.1, eq.~(8)]{la} that $B \cong \P^1$. Moreover, \cite[Sect.~0.1]{la} gives that if $\pi_*(K_X+2H) \cong \bigoplus\limits_{i=0}^4 \O_{\P^1}(a_i)$ and $e =\sum_{i=0}^4 a_i$, then there is $b \in \Z$ such that
$$(K_X+2H)^4=2e-b$$
by \cite[Sect.~1.1, eq.~(3)]{la} and
$$K_X^i(K_X+2H)^{4-i}=(-3)^i2e+(-3)^{i-1}(-4i+2ie+(3-2i)b) \ \hbox{for} \ 1 \le i \le 4$$
by \cite[Sect.~1.1, eq.~(4)]{la}. Solving these five equations we obtain 
$$K_XH^3=4e-28b-104 \ \hbox{and} \ d=H^4=16b+64$$
and therefore Lemma \ref{posi}(iii) gives
\begin{equation}
\label{e}
13d=48(2+e).
\end{equation}
Since $T_X(2)$ is Ulrich we have that $H^4(T_X(-2))=0$ and the exact sequence
$$0 \to T_{X/\P^1}(-2) \to T_X(-2) \to (\pi^*T_{\P^1})(-2) \to 0$$
implies that $H^4((\pi^*T_{\P^1})(-2))=0$. Hence, by Serre duality
$$0= h^4((\pi^*T_{\P^1})(-2))=h^0((\pi^*\O_{\P^1}(-2))(K_X+2H)) = h^0(\pi_*(K_X+2H) \otimes \O_{\P^1}(-2))=\sum\limits_{i=0}^4 h^0(\O_{\P^1}(a_i-2))$$
and therefore $a_i \le 1$ for $0 \le i \le 4$. But then $e \le 5$ and \eqref{e} gives that $1 \le e+2 \le 7$ is divisible by $13$, a contradiction.
\end{proof}

\begin{lemma}
\label{nosc4}
Let $X \subseteq \P^N$ be a smooth irreducible variety of dimension $4$. Suppose that $K_X+2H$ is ample and that $T_X(2)$ is an Ulrich vector bundle. Then $(X,K_X+2H)$ is not a linear $\P^2$-bundle over a smooth surface.
\end{lemma}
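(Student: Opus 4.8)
The plan is to argue by contradiction. Suppose $(X,K_X+2H)\cong(\P(\F),\O_{\P(\F)}(1))$ with $\F$ an (ample) rank $3$ bundle on the smooth surface $B$; write $\pi\colon X\to B$ and $\xi=\O_{\P(\F)}(1)=K_X+2H$. Since $k=2<\frac{n+1}{2}$, Lemma \ref{posi}(iii) gives $H^i(\O_X)=0$ for $i\ge1$, so $H^i(\O_B)=0$ for $i\ge1$ and $\chi(\O_B)=1$ (as $\pi_*\O_X=\O_B$ with vanishing higher direct images), whence Noether's formula on $B$ reads $K_B^2+c_2(B)=12$; moreover $\F$ is simple by Lemma \ref{notbundle}. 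From $K_X=-3\xi+\pi^*(K_B+\det\F)$ together with $\xi=K_X+2H$ one gets $K_B+\det\F=2M$ for some $M\in\Pic(B)$ and $H=2\xi-\pi^*M$.

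The key step is the vanishing $H^\bullet(B,\Omega^1_B\otimes\F)=0$. By Lemma \ref{ulr}(v) applied to $T_X(2)$, the bundle $\Omega^1_X(K_X+3H)$ is Ulrich, so (taking $p=1$ in the Ulrich condition) $H^\bullet(X,\Omega^1_X(K_X+2H))=H^\bullet(X,\Omega^1_X\otimes\xi)=0$. I would twist the relative cotangent sequence $0\to\pi^*\Omega^1_B\to\Omega^1_X\to\Omega^1_{X/B}\to0$ by $\xi$ and observe that $\Omega^1_{X/B}\otimes\xi$ is $\pi$-acyclic: the twisted relative Euler sequence $0\to\Omega^1_{X/B}\otimes\xi\to\pi^*\F\to\xi\to0$ push-forwards to the identity $\F\xrightarrow{\sim}\pi_*\xi$, while $\pi^*\F$ and $\xi$ have no higher direct images. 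Hence $H^\bullet(X,\pi^*\Omega^1_B\otimes\xi)=0$, and since $\pi_*(\pi^*\Omega^1_B\otimes\xi)=\Omega^1_B\otimes\F$ with vanishing higher direct images this is exactly $H^\bullet(B,\Omega^1_B\otimes\F)=0$; in particular $\chi(B,\Omega^1_B\otimes\F)=0$.

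Then I would compute this Euler characteristic a second way. Riemann--Roch on the surface $B$, using $\chi(\O_B)=1$, gives $\chi(B,\Omega^1_B\otimes\F)=c_1(\F)^2-2c_2(\F)-3c_2(B)+6$. Independently, writing $d=H^4$, $K_XH^3$ and $c_2(X)H^2$ in terms of $c_1(\F)^2,\,c_1(\F)K_B,\,K_B^2,\,c_2(\F)$ via the $\P(\F)$-projection formulas and $H=2\xi-\pi^*M$, and substituting into the numerical identities of Lemma \ref{posi}(ii),(vii) together with Noether on $B$, pins these Chern numbers down so tightly that $\chi(B,\Omega^1_B\otimes\F)$ becomes a combination $aK_B^2+bd-c$ with $a,b,c>0$; combined with $\chi=0$ this bounds both $K_B^2$ and $d$ to a very small range (in particular $K_B^2\le 6$), leaving only finitely many candidate numerical types. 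To eliminate those, I would use: (a) when $K_B^2\ge0$, Riemann--Roch on $B$ forces $-K_B$ to be effective and nonzero (as $B$ is rational), so $c_1(\F)\cdot(-K_B)>0$ since $\det\F=c_1(\F)$ is ample, contradicting the value of $c_1(\F)K_B$ produced by the identities; and (b) for the residual $K_B^2<0$ cases, the bound $g(C)\le p_a(\pi_*[C])$ for the general curve section $C=X_1$ of $X$ — whose genus is $\frac76 d+1$ by Lemma \ref{posi}(i) and whose image in $B$ lies in $|8\det\F-12M|$ — together with ampleness of $\det\F$ and, if needed, the semistability of $T_X$ (Lemma \ref{posi}(v)) pulled back through the relative Euler sequence, rules each one out.

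The part I expect to be genuinely laborious, and the main obstacle, is exactly this final numerical endgame: the Riemann--Roch/Chern-class bookkeeping, and then the case-by-case elimination of the few surviving pairs $(B,\F)$ with $K_B^2\le0$. Everything before it is formal once one notices the correct twist $K_X+2H=\xi$ and the $\pi$-acyclicity of $\Omega^1_{X/B}\otimes\xi$.
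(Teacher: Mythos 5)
Your first half is fine, and it is in fact a nice variant of the paper's bookkeeping: the vanishing $H^\bullet(X,\Omega^1_X\otimes\xi)=0$ (from Lemma \ref{ulr}(v), or Serre duality plus $H^\bullet(T_X(-2))=0$), the $\pi$-acyclicity of $\Omega^1_{X/B}\otimes\xi$ via the twisted relative Euler sequence, and the resulting $\chi(B,\Omega^1_B\otimes\F)=0$ are all correct; with Noether on $B$ this is precisely identity (iv) of Claim \ref{gro2}. The genuine gap is the numerical endgame, which is not merely laborious but structurally different from what you describe. If one actually carries out your system (the expressions of $d=H^4$, of Lemma \ref{posi}(ii) and (vii) in terms of $c_1(\F)^2$, $K_Bc_1(\F)$, $K_B^2$, $c_2(\F)$, $c_2(B)$, Noether on $B$, and $\chi(\Omega^1_B\otimes\F)=0$), it does \emph{not} pin the invariants down to finitely many types: it produces a one-parameter family, namely $48K_B^2+7d=336$ and $7K_Bc_1(\F)=28+5K_B^2$ (the same relations obtained in the paper's proof). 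This bounds $K_B^2$ from above (your $K_B^2\le 6$ is right) but does \emph{not} bound $d$, because nothing so far bounds $K_B^2$ from below; a priori the candidates $(K_B^2,d)=(0,48),(-7,96),(-14,144),\dots$ go on forever. Your proposed eliminations do not dispose of them: in (a) rationality of $B$ is asserted, not proved (it does hold, but only via rational connectedness of $X$ from Lemma \ref{posi}(iii), which you never invoke), and in any case (a) only treats the single value $K_B^2=0$; in (b) the inequality $g(C)\le p_a(\pi_*[C])$ requires $\pi|_C$ to be birational onto its image, which you do not address, and even granting it, computing along the family only yields a lower bound of roughly $K_B^2\ge -66$, leaving a residue of cases with no indicated mechanism to kill them.

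What actually finishes the proof — and is the paper's route — consists of exactly the two ingredients you omit or mention only as a contingency: integrality of $K_B^2=-\tfrac{7}{48}d+7$ forces $48\mid d$, hence $d\ge 48$; and semistability of $T_X$ (Lemma \ref{posi}(v)) applied to the quotient $\pi^*T_B$ in the relative tangent sequence $0\to T_{X/B}\to T_X\to\pi^*T_B\to 0$ gives $\mu(\pi^*T_B)\ge\mu(T_X)$, which along the family reads $-\tfrac13 d+12\ge\tfrac{d}{6}$, i.e.\ $d\le 24$ — an immediate contradiction with no case analysis at all. Your parenthetical ``if needed, the semistability of $T_X$'' points at the right tool, but it is relegated to an optional step inside a case-by-case scheme that, as written, does not exist; as it stands the proposal does not close the argument.
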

\begin{proof} 
Assume by contradiction that we have a $\P^2$-bundle structure $\pi: X \cong \P(\F) \to B$ onto a smooth surface $B$, with $K_X+2H = \xi$, the tautological bundle, where $\F$ is a rank $3$ vector bundle on $B$. Then $H = a\xi-\pi^*M$ for some $a \in \Z$ and $M \in \Pic(B)$, so that
$$\xi = K_X+2H = (2a-3)\xi + \pi^*(K_B+c_1(\F)-2M)$$
giving $a=2$ and $2M=K_B+c_1(\F)$, thus
\begin{equation}
\label{h}
H \equiv 2\xi-\frac{1}{2}\pi^*(K_B+c_1(\F)).
\end{equation}
We will also use Grothendieck's relation $\sum\limits_{j=0}^3 (-1)^j \xi^{3-j}\pi^*c_j(\F)=0$, that is
\begin{equation}
\label{gr}
\xi^3=\xi^2 \pi^*c_1(\F)-\xi \pi^*c_2(\F).
\end{equation}
Since $\xi^2f=1$ for every fiber $f$ of $\pi$, we get from \eqref{gr} that 
\begin{equation}
\label{gr2}
\xi^3\pi^*c_1(\F)=c_1(\F)^2, \xi^3\pi^*K_B=K_Bc_1(F) \ \hbox{and} \ \xi^4=c_1(\F)^2-c_2(\F).
\end{equation}
We first collect some invariants of $X$ and $B$.

\begin{claim}
\label{XeB}
We have:
\begin{itemize} 
\item[(i)] $K_XH^3=-\frac{2}{3}d$.
\item[(ii)] $\chi(\O_X)=1$.
\item[(iii)] $\chi(\O_X(H))=2+\chi(\O_S)-\frac{d}{6}$.
\item[(iv)] $h^0(K_X+2H)=\chi(\O_S)-1$. 
\item[(v)] $\chi(\O_B)=1$. 
\end{itemize} 
\end{claim}
\begin{proof}
(i) is obtained by Lemma \ref{posi}(ii). Now Lemma \ref{posi}(iii) gives that $H^i(\O_X)=0$ for $i \ge 1$, hence $H^i(\O_B)=0$ for $i \ge 1$, giving (ii) and (v). Next, to see (iii), consider the exact sequences 
$$0 \to \O_{X_i} \to \O_{X_i}(H) \to \O_{X_{i-1}}(H) \to 0$$
for $i=4, 3$. They give $\chi(\O_X(H))=1+\chi(\O_{X_3}(H))=2+\chi(\O_S(H))$ and (iii) follows by Riemann-Roch since $H_{|S}^2=d$ and $H_{|S}K_S=(K_X+2H)H^3=\frac{4}{3}d$ by (i). To see (iv), observe that, since $R^j \pi_*(-\xi)=0$ for every $j \ge 0$, we have that $H^i(K_X+H)=H^i(-\xi+\pi^*(K_B+c_1(\F)-M))=0$ for every $i$. Hence the exact sequence
$$0 \to K_X+H \to K_X+2H \to K_{X_3}+H_{|X_3} \to 0$$
implies that 
\begin{equation}
\label{una}
h^0(K_X+2H)=h^0(K_{X_3}+H_{|X_3}).
\end{equation} 
Now we have $q(S)=0$ by Lemma \ref{q} and $H^i(K_{X_3})=0$ for $i=0,1$ by Serre duality and Lemma \ref{11}. Hence the exact sequence 
$$0 \to K_{X_3} \to K_{X_3}+H_{|X_3} \to K_S \to 0$$
shows that 
$$\chi(\O_S)-1=p_g(S)=h^0(K_S)=h^0(K_{X_3}+H_{|X_3})$$ 
and we get (iv) by \eqref{una}.
\end{proof}
We continue the proof of the lemma. 

Next, we collect some relations among the invariants related to $\xi, K_B$ and the Chern classes of $\F$.

\begin{claim}
\label{gro2}
The following identities hold:
\begin{itemize} 
\item[(i)] $d-6c_1(\F)^2+16c_2(\F)-6K_B^2+4K_Bc_1(\F)=0$.
\item[(ii)] $K_Bc_1(\F)-8+2\chi(\O_S)-c_1(\F)^2+2c_2(\F)=0$.
\item[(iii)] $K_B^2-1+c_1(\F)^2-3c_2(\F)=0$.
\item[(iv)] $3K_B^2+c_1(\F)^2-2c_2(\F)-30=0$.
\item[(v)] $4-K_Bc_1(\F)+\frac{9}{4}K_B^2 + \frac{7}{4} c_1(\F)^2 - 5c_2(\F)-\chi(\O_S)+\frac{1}{6}d=0$.
\end{itemize} 
\end{claim}
\begin{proof}
We have by \eqref{h} and \eqref{gr2} that 
$$d = H^4 = (2\xi-\frac{1}{2}\pi^*(K_B+c_1(\F)))^4=16(c_1(\F)^2-c_2(\F))+6K_B^2-4K_Bc_1(\F)-10c_1(\F)^2$$ 
that is (i). To see (ii) observe that, since $\pi_*\xi = \F$ and $R^j\pi_*\xi=0$ for $j>0$ we have $H^i(\F)=H^i(\xi)=H^i(K_X+2H)=0$ for $i > 0$ by Kodaira vanishing. Also, by Claim \ref{XeB}(iv), (v) and Riemann-Roch we get
$$\chi(\O_S)-1=h^0(K_X+2H)=h^0(\xi)=h^0(\F)=\chi(\F)=3-\frac{1}{2}K_Bc_1(\F)+\frac{1}{2}c_1(\F)^2-c_2(\F)$$
that is (ii). Next, consider the exact sequences
\begin{equation}
\label{tan}
0 \to T_{X/B} \to T_X \to \pi^*T_B \to 0
\end{equation} 
and
$$0 \to \O_X \to \pi^* \F^*(\xi) \to T_{X/B} \to 0.$$
Since $T_X(2H)$ is Ulrich, we have that $\chi(T_X)=0$, hence, using Claim \ref{XeB}(ii) we get
\begin{equation}
\label{chi1}
\chi(T_B)=\chi(\pi^*T_B)=-\chi(T_{X/B})=-\chi(\pi^* \F^*(\xi))+1=-\chi(\F \otimes \F^*)+1.
\end{equation} 
On the other hand, by Riemann-Roch and Claim \ref{XeB}(v), $\chi(T_B)=2K_B^2-10$ and $\chi(\F \otimes \F^*)=9+2c_1(\F)^2-6c_2(\F)$. Replacing in \eqref{chi1} gives (iii).

Finally, to see (iv), we first compute $c_1(S^2(\F)) = 4c_1(\F)$ and $c_2(S^2(\F)) = 5c_1(\F)^2 + 5c_2(\F)$, so that
$$c_1(S^2(\F)(-M)) = -3K_B + c_1(\F), c_2(S^2(\F)(-M)) = \frac{15}{4}K_B^2 - \frac{5}{2}K_Bc_1(\F) - \frac{5}{4} c_1(\F)^2 + 5c_2(\F).$$
Now Riemann-Roch gives
\begin{equation}
\label{chi2}
\chi(S^2(\F)(-M)) = 6-K_Bc_1(\F)+\frac{9}{4}K_B^2 + \frac{7}{4} c_1(\F)^2 - 5c_2(\F).
\end{equation}
On the other hand, $\chi(S^2(\F)(-M))=\chi(2\xi-\pi^*M)=\chi(\O_X(H))=2+\chi(\O_S)-\frac{d}{6}$ by Claim \ref{XeB}(iii). Using \eqref{chi2} we get (v).
\end{proof}
We now conclude the proof of the lemma. 

Solving the five equations in Claim \ref{gro2} we get 
\begin{equation}
\label{sol}
K_B^2=-\frac{7}{48}d+7 \ \hbox{and} \ K_Bc_1(F)=-\frac{5}{48}d+9.
\end{equation} 
In particular $d \ge 48$. On the other hand, using Claim \ref{XeB}(i), we get
$$\mu(T_X)=\frac{-K_XH^3}{4}=\frac{1}{6}d$$
and, using \eqref{h}
$$\mu(\pi^*T_B) = \frac{c_1(\pi^*T_B)H^3}{2} = \frac{-\pi^*K_B\big[2\xi-\frac{1}{2}\pi^*(K_B+c_1(\F))\big]^3}{2}=-\frac{8\xi^3\pi^*K_B-6\xi^2\pi^*(K_B^2+K_Bc_1(\F))}{2}.$$
Now \eqref{gr2} and \eqref{sol} give
$$\mu(\pi^*T_B) =-K_Bc_1(F)+3K_B^2= -\frac{1}{3}d+12.$$
Since $T_X$ is semistable by Lemma \ref{posi}(v), we deduce by \eqref{tan} that 
$$\frac{1}{6}d \le -\frac{1}{3}d+12$$
that is $d \le 24$, a contradiction.
\end{proof}

\section{$T_X(1)$ Ulrich in any dimension}

We study the case $k=1$ in any dimension. We start analyzing the properties of the curve section $C$ and of the surface section $S$.

\begin{lemma}
\label{grado} 

Let $X \subseteq \P^N$ be a smooth irreducible variety of dimension $n \ge 3$. If $T_X(1)$ is an Ulrich vector bundle, then $d \ge 9$ except, possibly, when $d=8, g=5, n=4$ and $h^0(\O_C(1)) = 4$.
\end{lemma}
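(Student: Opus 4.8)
The plan is to use the structure results already established for $k=1$. When $T_X(1)$ is Ulrich, Lemma~\ref{posi}(i) gives $g-1=\frac{n-1}{n+2}d$, so $d$ is divisible by $n+2$ (since $\gcd(n-1,n+2)\mid 3$, one must be slightly careful, but in any case $d$ is forced into a short list of small values once a lower bound on $g$ is unavailable), and Lemma~\ref{sezioni} applies: in particular $H^1(\O_{X_i})=H^2(\O_{X_i})=0$, $H^1(\O_{X_i}(1))=H^2(\O_{X_i}(1))=0$, $h^0(\O_{X_i}(1))=d-g+i$, and $d\ge n+3$. The first step is therefore to combine $d\ge n+3$ with $g=\frac{n-1}{n+2}d+1$ and the divisibility it forces to list the candidate pairs $(d,g)$ for each $n\ge 3$ with $d\le 8$; for $n=3$ one gets $d\in\{10,\dots\}$ already too big, for $n=4$ the relation is $g-1=\frac{d}{2}$, giving $(d,g)=(8,5)$ as essentially the only possibility in range, and for $n\ge 5$ one checks that $d\ge n+3$ together with the divisibility pushes $d$ past $8$. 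So the only surviving case is $n=4$, $d=8$, $g=5$.

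The second step is to pin down $h^0(\O_C(1))$ in this surviving case. By Lemma~\ref{sezioni}(v), $h^0(\O_C(1))=d-g+1=8-5+1=4$, so $C\subset\P^3$ is a (linearly normal, nondegenerate) curve of degree $8$ and genus $5$. The task is then to rule this out, or else — since we expect it genuinely cannot be further excluded by these elementary means — to record it as the stated exception. Concretely I would first try to derive a contradiction from the Chern-class identities of Lemma~\ref{ulr}(ii) or Lemma~\ref{posi}(vii) specialized to $n=4$, $k=1$, using $K_XH^3=HK_S\cdot(\dots)$ and the Hodge index inequality $dK_X^2H^{n-2}\le(K_XH^{n-1})^2$ as in the proof of Theorem~\ref{bou}; but these were already used to get $k\le n+1$ and need not give anything sharper here. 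Failing that, one examines $C$ of degree $8$, genus $5$ in $\P^3$: such curves exist (e.g.\ on a quadric as type $(3,5)$, or as certain curves on a cubic surface), so no contradiction arises at the curve level, and one accepts the exception.

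The third step — and the main obstacle — is that excluding $d=8,g=5,n=4$ outright does not follow from the curve section alone; it requires the global geometry of the fourfold $X$. The hard part will be to show (in the body of the paper, not here) that a smooth fourfold of degree $8$ with sectional genus $5$ carrying $T_X(1)$ Ulrich does not exist, presumably via adjunction theory (Lemma~\ref{posi2} forces $K_X+3H$ nef and effective; one then analyzes $(X,H)$ through the adjunction mapping, using Lemmas~\ref{noadj}, \ref{noexc}, \ref{notbundle}, \ref{DP4}, \ref{noqf4}, \ref{nosc4} to kill the possible fibration structures). Since that analysis is exactly what the subsequent sections are devoted to, the correct move at this point is simply to isolate $d=8,g=5,n=4,\,h^0(\O_C(1))=4$ as the lone possible exception and defer its elimination. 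So the proof reduces to the bookkeeping of the first two steps: enumerate $(d,g)$ via Lemma~\ref{posi}(i) and $d\ge n+3$ from Lemma~\ref{sezioni}(vi), observe the only survivor in the range $d\le 8$ is $n=4$, $d=8$, $g=5$, and compute $h^0(\O_C(1))=d-g+1=4$ from Lemma~\ref{sezioni}(v).
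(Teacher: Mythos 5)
Your proposal is correct and follows essentially the same route as the paper: combine the relation $(n-1)d=(n+2)(g-1)$ from Lemma~\ref{posi}(i) with the constraint coming from Lemma~\ref{sezioni} (the paper uses $h^0(\O_C(1))=d-g+1\ge 4$, i.e.\ $g\le d-3$, while you invoke the equivalent bound $d\ge n+3$ of Lemma~\ref{sezioni}(vi) together with the divisibility forced by the relation) and enumerate the cases $d\le 8$, leaving $(n,d,g)=(4,8,5)$ with $h^0(\O_C(1))=4$ as the lone survivor. Your second and third ``steps'' are unnecessary for the lemma as stated, since it only records this case as a possible exception rather than excluding it, but they do not affect the correctness of the argument.
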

\begin{proof}
By Lemma \ref{posi}(i) we know that $g \ge 2$ and that
\begin{equation}
\label{con}
(n-1)d=(n+2)(g-1).
\end{equation}
By Lemma \ref{sezioni}(v) we have $d-g+1 = h^0(\O_C(1)) \ge 3$, hence $g \le d-2$. Also, if equality holds, then
$h^0(\O_C(1)) = 3$, so that $d-2 = g = \binom{d-1}{2}$, thus $d=3$ and $g=1$, a contradiction. Therefore $2 \le g \le d-3$, hence $d \ge 5$. But if $d \le 8$ the only possibility given by \eqref{con} is $d=8, g=5, n=4$ and $h^0(\O_C(1)) = 4$.
\end{proof}

\begin{lemma}
\label{varie-bis} 

Let $X \subseteq \P^N$ be a smooth irreducible variety of dimension $n \ge 3$. If $T_X(1)$ is an Ulrich vector bundle we have:
\begin{itemize} 
\item[(i)] $K_S H_{|S} = \frac{n-4}{n+2}d$.
\item[(ii)] $q(S)=p_g(S)=0$.
\item[(iii)] $K_S^2= - \frac{3(n-2)}{2(n+2)}d-\frac{n-12}{2}$.
\item[(iv)] $S$ is rational.
\end{itemize} 
\end{lemma}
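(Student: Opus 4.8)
The plan is: obtain (i) from the adjunction formula, (ii) from the vanishings already recorded for the complete intersections $X_i$, (iii) from a Chern class computation on the surface section combined with Noether's formula, and (iv) from (i), (ii) together with the rational connectedness of $X$.

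For (i): by \eqref{can} one has $K_S=(K_X+(n-2)H)_{|S}$, so $K_SH_{|S}=(K_X+(n-2)H)H^{n-1}=K_XH^{n-1}+(n-2)d$; substituting $K_XH^{n-1}=-\frac{n(n-1)}{n+2}d$, which is Lemma \ref{posi}(ii) with $k=1$, and simplifying gives the claim. For (ii): since $n\ge 3$ and $T_X(1)$ is Ulrich, Lemma \ref{sezioni}(i),(ii) give $H^1(\O_S)=H^1(\O_{X_2})=0$ and $H^2(\O_S)=H^2(\O_{X_2})=0$, hence $q(S)=0$ and, by Serre duality on the surface $S$, $p_g(S)=h^2(\O_S)=0$ (alternatively $q(S)=q(X)=0$ by Lemma \ref{q} and Lemma \ref{posi}(iii)). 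In particular $\chi(\O_S)=1$.

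For (iii): iterating Lemma \ref{ulr}(ix) shows that $\E:=T_X(1)_{|S}$ is an Ulrich vector bundle of rank $n$ on $(S,H_{|S})$. Compute the integer $c_2(\E)$ in two ways. First, directly, $c_2(\E)=c_2(T_X(1))H^{n-2}$ is given by \eqref{c2t} with $k=1$, hence in terms of $c_2(X)H^{n-2}$, $K_XH^{n-1}$ and $d$. Second, the surface case of Lemma \ref{ulr}(ii) applied to $\E$ on $S$ expresses $c_2(\E)$ in terms of $c_1(\E)=(-K_X+nH)_{|S}=-K_S+(2n-2)H_{|S}$, the value of $K_SH_{|S}$ from (i), $c_2(S)$, $K_S^2$ and $d$. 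Combining these with formula \eqref{c2} for $c_2(S)$ in terms of $c_2(X)H^{n-2}$, and with Noether's identity $K_S^2+c_2(S)=12\chi(\O_S)=12$ from (ii), gives a linear equation for $K_S^2$ whose unique solution is the stated one. (Equivalently, one may substitute $K_X^2H^{n-2}=(K_S-(n-2)H_{|S})^2$ and the expression for $c_2(X)H^{n-2}$ coming from \eqref{c2} and Noether into Lemma \ref{posi}(vii) with $k=1$.) This is a routine computation.

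For (iv): by (ii), $q(S)=0$, so by Castelnuovo's rationality criterion it suffices to show $\kappa(S)=-\infty$, equivalently that $K_S$ is not pseudoeffective (equivalently that $S$ is uniruled). If $n=3$ this is immediate from (i), since $K_SH_{|S}=-\frac{d}{5}<0$ while a pseudoeffective divisor meets the very ample $H_{|S}$ nonnegatively. If $n=4$, then $K_SH_{|S}=0$ by (i); were $K_S$ pseudoeffective, its Zariski decomposition $K_S=P+N$ would force $N=0$ (as the effective $N$ has $N\cdot H_{|S}=0$) and $P\equiv 0$ (a nonzero nef class has positive intersection with an ample one), hence $K_S\equiv 0$, so $S$ would be a minimal surface with $K_S\equiv 0$, $q(S)=0$, $\chi(\O_S)=1$, i.e.\ an Enriques surface; this contradicts the simple connectedness of $S$, which holds because $X$ is rationally connected by Lemma \ref{posi}(iii), hence simply connected, and the Lefschetz hyperplane theorem, applied $n-2$ times to sections of dimension $\ge 2$, gives $\pi_1(S)\cong\pi_1(X)$. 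For general $n$ one must again establish that $S$ is uniruled: the route is to exploit that $-K_XH^{n-1}>0$ and that $X$ is rationally connected in order to produce rational curves through a general point of $S$, using that Theorem \ref{prop} rules out $K_X$ being numerically proportional to $H$ and Lemma \ref{noadj}(i),(ii) rules out $(X,H)$ being a scroll or a quadric fibration over a curve, so that the nef value morphism of $X$ is a nontrivial fibration with positive dimensional, rationally connected general fibers whose intersections with $S$ sweep out $S$ by uniruled subvarieties. I expect this last point — the descent of uniruledness from $X$ to the general codimension $n-2$ linear section $S$, which fails for an arbitrary rationally connected $X$ and must use the extra positivity of $-K_X$ forced by the Ulrich hypothesis — to be the main obstacle.
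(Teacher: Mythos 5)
Parts (i)--(iii) of your proposal are fine and essentially coincide with the paper's argument (adjunction plus Lemma \ref{posi}(ii) for (i); Lemmas \ref{posi}(iii), \ref{q}, \ref{sezioni}(ii) for (ii); Lemma \ref{posi}(vii) with $k=1$ rewritten via \eqref{c2} and Noether's formula for (iii)). The genuine gap is in (iv): your argument only covers $n=3$ and $n=4$, and for $n\ge 5$ you explicitly leave open the key point, namely why $S$ should be uniruled. There is no such ``descent of uniruledness'' to exploit: for $n\ge 5$ part (i) gives $K_S\cdot H_{|S}=\frac{n-4}{n+2}d>0$, so $S$ is not even numerically close to being uniruled, and a general codimension-$(n-2)$ linear section of a rationally connected variety is in general of general type; the nef-value/fibration route you sketch (fibers cutting $S$ in uniruled subvarieties) does not produce a proof and is not how rationality of $S$ is obtained. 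So as written the proposal does not prove (iv).

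The paper's proof of (iv) is a numerical contradiction rather than a geometric uniruledness argument, and it is the step you are missing. Assume $S$ is not ruled, so $\kappa(S)\ge 0$ and $K_S$ is pseudoeffective. By Lemma \ref{posi2}, $K_X+(n-1)H$ is nef (the exceptional case there has $k=0$), hence $K_S+H_{|S}=(K_X+(n-1)H)_{|S}$ is nef and therefore $K_S(K_S+H_{|S})\ge 0$, i.e.\ $K_S^2\ge -K_SH_{|S}=-\frac{n-4}{n+2}d$ by (i). Plugging in (iii) gives $(n+2)(d+n-12)\le 0$, hence $d\le 12-n\le 9$. Now Lemma \ref{grado} (which you never invoke) forces either $d=9$, $n=3$, which contradicts the integrality of $g$ in Lemma \ref{posi}(i), or $d=8$, $g=5$, $n=4$ with $h^0(\O_C(1))=4$; in the latter case $d+n-12=0$ gives $K_S^2=0$ and $S\subset\P^4$, contradicting the double point formula for smooth surfaces in $\P^4$. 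Hence $S$ is ruled, and rational since $q(S)=0$. Your $n=3$ argument ($K_SH_{|S}<0$) and your $n=4$ Enriques/simple-connectedness argument are correct alternatives in those two cases, but the uniform argument via Lemma \ref{posi2}, (iii) and Lemma \ref{grado} is what handles all $n\ge 3$.
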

\begin{proof}
(i) follows by Lemma \ref{posi}(ii), while (ii) follows by Lemma \ref{posi}(iii), Lemma \ref{q} and Lemma \ref{sezioni}(ii). Note now that the equation in Lemma \ref{posi}(vii) can be rewritten as
$$3(n-2)d+2(n+2)K_S^2+(n+2)(n-12)=0$$
giving (iii). Finally assume that $S$ is not ruled, so that $\kappa(S) \ge 0$. Then Lemma \ref{posi2} gives that $K_S+H_{|S}=(K_X+(n-1)H)_{|S}$ is nef, hence $K_S(K_S+H_{|S}) \ge 0$, that is $K_S^2 \ge - K_S H_{|S} = -\frac{n-4}{n+2}d$ by (i). Then (iii) gives 
$$- \frac{3(n-2)}{2(n+2)}d-\frac{n-12}{2} = K_S^2 \ge -\frac{n-4}{n+2}d$$
so that
$$(n+2)(d+n-12) \le 0.$$
Since $n \ge 3$ it follows that $d \le 9$, and using Lemma \ref{grado} we deduce that either $d=9, n=3$ or $d=8, g=5, n=4$ and $h^0(\O_C(1)) = 4$. In the first case we get a contraction by Lemma \ref{posi}(i), while in the second case $d+n-12=0$, hence $K_S^2=0$. As $C \subset \P^3$ we deduce that $S \subset \P^4$. But this contradicts the well-known formula for the invariants of a surface in $\P^4$. Therefore $S$ is ruled, hence rational by (ii) and (iv) is proved. 
\end{proof}

We are now ready to prove Theorem \ref{k=1}.

\renewcommand{\proofname}{Proof of Theorem \ref{k=1}}
\begin{proof}
If $n=1$ we know by Lemma \ref{posi}(i) that $T_X(1)$ is not an Ulrich vector bundle. If $n=2$ this is Theorem \ref{k=1sup}. Suppose next that $n \ge 3$. Note that $H^0(T_X)=0$ by Lemma \ref{coh}(iii), hence $X$ is neither $\P^n$ nor $Q_n$. Also $q(X)=0$ by Lemma \ref{posi}(iii). We have that $(X,H)$ is not:
\begin{itemize}
\item [(1)] A projective bundle over a smooth curve by Lemma \ref{notbundle}.
\item [(2)] A Del Pezzo manifold by Lemma \ref{posi}(i), since otherwise $g=1$.
\item [(3)] A hyperquadric fibration over a smooth curve (in the sense of \cite{i}), by Lemma \ref{noadj}(ii).
\item [(4)] A linear $\P^{n-2}$-bundle over a smooth surface, by Lemma \ref{noadj}(i).
\end{itemize}
Also observe that $X$ does not contain any exceptional divisor of type 1 by Lemma \ref{noexc}(i). Hence $(X,H)$ is isomorphic to its reduction $(X',H')$ (see \cite[(0.11)]{i}). It follows by \cite[Thm.~(1.7)]{i} that $K_X+(n-2)H$ is nef. Hence $S$ is minimal and rational by Lemma \ref{varie-bis}(iv), a contradiction since a minimal rational surface does not have nef canonical bundle. Thus the case $n \ge 3$ does not occur and the theorem is proved.
\end{proof} 
\renewcommand{\proofname}{Proof}

\section{$T_X(2)$ Ulrich in any dimension}

We prove Theorem \ref{k=2}. 

\renewcommand{\proofname}{Proof Theorem \ref{k=2}}
\begin{proof}
It follows by Lemma \ref{coh}(iii) that $H^0(T_X)=0$, hence $X$ is neither $\P^n$ nor $Q_n$. Note that $H^i(\O_X)=0$ for $i \ge 1$ by Lemma \ref{posi}(iii) and $K_X$ is not nef, since Lemma \ref{posi}(ii) gives that $K_X H^{n-1} = \frac{n(3-n)}{n+2}d<0$.

We divide the proof according to the value of $\tau(X,H)$ (see \eqref{tau}). We will also use the notions of first and second reduction of $(X,H)$, as defined in \cite[Defs.~7.3.3 and 7.5.7]{bs}.

\smallskip

\underline{Case A}: $\tau(X,H) \ge n-1$.

\smallskip

This case does not occur since Lemma \ref{gg}(iii) implies that $\tau(X,H) \le n - \frac{2n}{n+1} < n-1$.

\smallskip

\underline{Case B}: $n-2 \le \tau(X,H) < n-1$.

\smallskip

Then $K_X+(n-1)H$ is ample, hence the first reduction exists and is isomorphic to $(X,H)$. Therefore \cite[Thm.~7.3.4]{bs} implies that $\tau(X,H)=n-2$ and then \cite[Thm.~7.5.3]{bs} gives that $(X,H)$ is one of the following:
\begin{itemize}
\item [(B.1)] a Mukai variety,
\item [(B.2)] a Del Pezzo fibration over a smooth curve,
\item [(B.3)] a quadric fibration over a normal surface,
\item [(B.4)] a scroll over a normal threefold,
\item [(B.5)] $(X,H)$ contains an exceptional divisor of type 2, 3, or 4. 
\end{itemize}
Now, the case (B.1) is ruled out by Corollary \ref{sottoc}. Case (B.2) is excluded for $n=4$ by Lemma \ref{DP4}(i) and for $n \ge 5$ by Lemma \ref{noadj}(iii). Also the cases (B.3) and (B.4) are ruled out by Lemma \ref{noadj}(ii) and (i). Finally the case (B.5) is excluded by Lemma \ref{noexc}(ii) and (iii).

Thus also Case B does not occur.

\underline{Case C}: $\tau(X,H)< n-2$.

\smallskip

Then the first and second reductions exist and are both isomorphic to $(X,H)$, since $K_X+(n-2)H$ is ample.

We first claim that $K_{X_3}$ is not nef. In fact, assume that $K_{X_3}$ is nef. On the one hand, $\chi(\O_{X_3})=1$ by Lemma \ref{11}. On the other hand, $3c_2(X_3) - c_1(X_3)^2$ is pseff by \cite[Thm.~1.1]{m}, hence $3c_2(X_3)K_{X_3} \ge K_{X_3}^3 \ge 0$. But then Riemann-Roch gives $\chi(\O_{X_3}) = - \frac{1}{24}c_2(X_3)K_{X_3} \le 0$, a contradiction.

Hence $K_{X_3}$ is not nef and  \cite[Prop.~7.9.1]{bs} gives the following cases:
\begin{itemize}
\item [(C.1)] $n=5$ and $(X,K_X+3H)$ is a linear $\P^4$-bundle over a smooth curve.
\item [(C.2)] $n=4$ and $(X,K_X+2H)$ is a Del Pezzo.
\item [(C.3)] $n=4$ and $(X,K_X+2H)$ is a quadric fibration over a smooth curve.
\item [(C.4)] $n=4$ and $(X,K_X+2H)$ is a scroll over a normal surface.
\item [(C.5)] $n=4$ and $(X,H)$ contains an exceptional divisor of type 2.
\item [(C.6)] $n=4$ and $(X,K_X+2H)$ is a $(\P^3,\O_{\P^3}(2))$-fibration over a curve.
\end{itemize}
In case (C.1) we have a contradiction by Lemma \ref{notbundle}. In case (C.2) we have $4K_X+6H=0$, hence $4K_XH^3+6H^4=0$ and Lemma \ref{posi}(ii) gives the contradiction $d=0$. Cases (C.3) and (C.5) do not occur by Lemmas \ref{noqf4} and \ref{noexc}(ii). In Case (C.6) we observe that the fibration is obtained in \cite[(4.6.1)]{f2} by contracting an extremal ray, hence it is minimal (see Definition \ref{minimal}) and the image is a normal, hence smooth, curve. Thus this case is excluded by Lemma \ref{DP4}(ii).

Hence we are left with case (C.4). We have a surjective morphism $\pi: X \to B$ and denoting by $F$ a general fiber, we have $(F,(K_X+2H)_{|F}) \cong (\P^2,\O_{\P^2}(1))$. Now all fibers of $\pi$ are $2$-dimensional by \cite[Thm.~14.1.1]{bs}, hence we get by \cite[Prop.~3.2.1]{bs} that $B$ is a smooth surface and $(X,K_X+2H)$ is a linear $\P^2$-bundle over $B$. But this case is excluded by Lemma \ref{nosc4}. 

This concludes the proof of the theorem.
\end{proof}
\renewcommand{\proofname}{Proof}

\appendix
\section{Some numerical lemmas}

\begin{lemma}
\label{conto}
Let $(a;c_1,c_2,c_3,c_4) \in \Z^5$ be such that $c_1 \ge c_2 \ge c_3 \ge c_4 \ge 0, a \ge c_1+ c_2 + 3$ and
\begin{equation}
\label{1}
a^2-6a+4=c_1^2+c_2^2+c_3^2+c_4^2.
\end{equation}
Then  $(a;c_1,c_2,c_3,c_4) \in \{(6;2,0,0,0), (6;1,1,1,1), (7;3,1,1,0), (9;3,3,3,2)\}$.
\end{lemma}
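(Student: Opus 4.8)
The plan is to treat this as a finite Diophantine problem: bound $a$ from above, then enumerate the handful of remaining possibilities. The starting observation is the chain of inequalities coming from $c_1 \ge c_2$ and $c_3, c_4 \le c_2$, namely $c_1^2+c_2^2+c_3^2+c_4^2 \le c_1^2+3c_2^2 \le (c_1+c_2)^2 \le (a-3)^2$. I would introduce the two ``slack'' quantities $\delta := (c_1+c_2)^2-(c_1^2+c_2^2+c_3^2+c_4^2) = 2c_1c_2-c_3^2-c_4^2 \ge 0$ and, writing $m := a-3-c_1-c_2 \ge 0$, the quantity $\varepsilon := (a-3)^2-(c_1+c_2)^2 = m(2a-6-m) \ge 0$. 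Since $(a-3)^2-(a^2-6a+4) = 5$, the hypothesis \eqref{1} is exactly the identity $\delta+\varepsilon = 5$.

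Next I would bound $a$. If $a \le 3$ then $c_1=c_2=0$ and \eqref{1} reads $a^2-6a+4=0$, which has no integer root; if $a \in \{4,5\}$ the left side of \eqref{1} is negative; so $a \ge 6$. For $a \ge 7$: if $m \ge 1$ then, using $1 \le m \le a-3$, one has $\varepsilon = m(2a-6-m) \ge 2a-7 \ge 7 > 5$, contradicting $\delta+\varepsilon=5$; hence $m=0$, so $\varepsilon=0$, $\delta=5$ and $c_1+c_2=a-3$. The same estimate gives $m \in \{0,1\}$ when $a=6$. This already confines everything to $a \in \{6,7,\dots\}$ with $\delta = 5$ (plus the single extra case $a=6$, $m=1$, $\delta=0$).

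It remains to solve $\delta = 2c_1c_2-c_3^2-c_4^2 = 5$ with $c_1 \ge c_2 \ge c_3 \ge c_4 \ge 0$, which is finite. From $c_3^2+c_4^2 \le 2c_2^2 \le 2c_1c_2$ one gets first $c_2 \ge 1$ and then $2c_2(c_1-c_2) \le 5$. If $c_1=c_2$, then $c_3 = c_1$ is forced (since $c_3 \le c_1-1$ would give $c_3^2+c_4^2 \le 2(c_1-1)^2 < 2c_1^2-5$ for $c_1 \ge 2$), whence $c_4^2 = c_1^2-5$ and so $c_1=3$, $c_4=2$, giving $(a;c_1,c_2,c_3,c_4)=(9;3,3,3,2)$. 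If $c_1>c_2$ then $c_2(c_1-c_2) \in \{1,2\}$, leaving $(c_1,c_2) \in \{(2,1),(3,1),(3,2)\}$, of which only $(3,1)$ is compatible with $\delta=5$, giving $(7;3,1,1,0)$. For $a=6$: the sub-case $m=0$ would require $c_1+c_2=3$, incompatible with either solution of $\delta=5$, hence empty; the sub-case $m=1$ forces $\delta=0$, i.e. $c_3^2+c_4^2 = 2c_1c_2$ with $c_1+c_2=2$, whose only solutions are $(6;2,0,0,0)$ and $(6;1,1,1,1)$. Assembling these cases yields precisely the four listed tuples.

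I expect the only genuine obstacle to be the upper bound $a \le 9$: this is the one place where the argument really uses the structure, by splitting the total slack $5$ between $\delta$ and $\varepsilon$ and noting that $\varepsilon$ is already too large as soon as $m \ge 1$ and $a \ge 7$. Once $a$ is bounded, the rest is a routine finite verification.
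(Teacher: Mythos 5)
Your proof is correct. The key identity $\delta+\varepsilon=5$ with $\delta=2c_1c_2-c_3^2-c_4^2\ge 0$ and $\varepsilon=(a-3)^2-(c_1+c_2)^2=m(2a-6-m)\ge 0$ is exactly the content of the hypotheses, and your estimate $\varepsilon\ge 2a-7>5$ for $m\ge 1$, $a\ge 7$ (valid since $m\le a-3$ puts $m$ on the increasing branch of the parabola) correctly forces $m=0$ and $\delta=5$ there; the residual cases $a\le 6$ and the finite solution of $\delta=5$ are all verified accurately, including the forcing of $c_3=c_1$ when $c_1=c_2$ (which needs $c_1\ge 2$, guaranteed by $2c_1c_2\ge 5$). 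This is a genuinely different organization from the paper's argument: the paper only extracts the one-sided inequality $c_3^2+c_4^2\ge 2c_1c_2-5$ (your $\delta\le 5$) and combines it with $c_3^2+c_4^2\le 2c_3^2$, $2c_1c_2\ge 2c_2^2$ to get $2(c_2-c_3)(c_2+c_3)\le 5$, then runs a case analysis on $c_2-c_3\in\{0,1\}$ and repeatedly factors expressions of the form $(a+c-3)(a-c-3)=N$ to locate $a$. Your version instead spends the total slack of $5$ between the two nonnegative quantities $\delta$ and $\varepsilon$ first, which pins down $a=c_1+c_2+3$ exactly for $a\ge 7$ and collapses everything to the single Diophantine equation $\delta=5$; this buys a shorter and more systematic enumeration, at the cost of introducing the bookkeeping variables $m,\delta,\varepsilon$. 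Both arguments are elementary and complete.
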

\begin{proof}
We have 
\begin{equation}
\label{2}
a-3 \ge c_1+c_2.
\end{equation}
Now, \eqref{1} and \eqref{2} imply that $(a-3)^2-5=c_1^2+c_2^2+c_3^2+c_4^2 \ge (c_1+c_2)^2-5$, that is
\begin{equation}
\label{3}
c_3^2+c_4^2 \ge 2c_1c_2-5.
\end{equation}
But $2c_3^2 \ge c_3^2+c_4^2$ and $2c_1c_2 \ge 2c_2^2$. Consequently, we get 
\begin{equation}
\label{4}
5 \ge 2(c_2-c_3)(c_2+c_3).
\end{equation}
Thus, one of the following should happen:
\begin{itemize}
\item[($\alpha$)] $c_2=c_3$.
\item[($\beta$)] $c_2=c_3+1$.
\end{itemize}
First assume that case ($\beta$) holds.

Then \eqref{4} yields $2c_3 \le 1$ which gives $c_3=0, c_2=2$ and hence $c_4=o$. The \eqref{1} gives 
$$(a+c_1-3)(a-c_1-3)=6.$$ 
Thus we have one of the following possibilities
$$a+c_1=4, a-c_1=9, \ \hbox{or} \ a+c_1=5, a-c_1=6, \ \hbox{or} \ a+c_1=6, a-c_1=5, \ \hbox{or} \   \ a+c_1=9, a-c_1=4$$
but none of them have integer solutions. 

Assume now that case ($\alpha$) holds.

Set $c_2=c_3=c$. From \eqref{3}, we obtain $c^2+c_4^2 \ge 2c_1c-5$. Since $c \ge c_4$, we get 
\begin{equation}
5 \ge 2c(c_1-c).
\end{equation}
The above implies one of the following happens:
\begin{itemize}
\item[($\alpha1$)] $c_2=c_3=c_4=0$.
\item[($\alpha2$)] $c_1=c_2=c_3$.
\item[($\alpha3$)] $c_2=c_3=1$. This case has two sub-cases, namely $c_1=2,3$.
\item[($\alpha4$)] $c_2=c_3=1$, $c_1=3$.
\end{itemize}

Suppose we are in case ($\alpha1$). 

Then \eqref{1} gives $(a-3)^2-5=c_1^2$, so that $(a+c_1-3)(a-c_1-3)=5$. In this case, either $a+c_1=4, a-c_1=8$, giving the contradiction  $c_1=-2$, or $a+c_1=8, a-c_1=4$, giving $a=6, c_1=2$ and the solution $(6;2,0,0,0)$.

Suppose we are in case ($\alpha2$). 

Using \eqref{3} we conclude 
\begin{equation}
\label{6}
5 \ge (c-c_4)(c+c_4).
\end{equation}
As before, we obtain the following cases:
\begin{itemize}
\item[($\alpha21$)] $c=c_1=c_2=c_3=c_4$.
\item[($\alpha22$)] $c=c_4+1$.
\item[($\alpha23$)] $c=c_4+2$.
\end{itemize}

We first deal with ($\alpha21$). In this case, from \eqref{1}, we obtain 
$$(a+2c-3)(a-2c-3)=5.$$
Thus, we have either $a+2c=4, a-2c=8$, giving the contradiction $c=-1$, or $a+2c=8, a-2c=4$, giving the solution $(6;1,1,1,1)$.

We now deal with ($\alpha22$). From \eqref{6} we obtain $c+c_4-2 \le 5$, hence $c_4 \le 2$. Thus 
$(c, c_4) \in \{(3,2), (2,1), (1,0)\}$ and using \eqref{1} we see that it has no  integer solutions except in the first case, giving the solution $(9;3,3,3,2)$. 

We now deal with $(\alpha23)$. As before, in this case we have $c_4\le 0$. This implies $c=2, c_4=0$. But then \eqref{1} does not have any integer solution.
 
This concludes case ($\alpha2$). 

Suppose we are in case ($\alpha3$). 

We know that $(c_1,c_2,c_3,c_4) \in \{(1,1,1,1), (2,1,1,0), (3,1,1,1), (3,1,1,0)\}$. Using \eqref{1} we see that we have no integer solutions except in the last case, giving $(7;3,1,1,0)$.

Suppose we are in case $(\alpha4)$. 

Then $(c_1,c_2,c_3,c_4) \in \{(3,2,2,2),(3;2,2,1),(3;2,2,0)\}$ and \eqref{1} has no integer solutions.

This concludes case ($\alpha$) and the proof.
\end{proof}

\begin{lemma}
\label{632num}
Let ${\bf z}=(a;b_1,b_2,b_3,b_4,b_5,b_6) \in \Z^7$ with $b_1 \ge b_2 \ge b_3 \ge b_4 \ge b_5 \ge b_6$ satisfying the following
\begin{equation}
\label{0.1.1}
a^2-\sum_{i=1}^6b_i^2=10,\quad 3a-\sum_{i=1}^6b_i=6.
\end{equation}
Then ${\bf z} \in \left\{(4;1,1,1,1,1,1),(5;2,2,2,1,1,1),(6;3,2,2,2,2,1),(7;3,3,3,2,2,2),(8;3,3,3,3,3,3)\right\}$.
\end{lemma}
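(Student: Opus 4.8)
The plan is to treat this as a pure Diophantine problem, reducing everything to one square–completion identity. Write $s=\sum_{i=1}^6 b_i$ and $q=\sum_{i=1}^6 b_i^2$; then the hypotheses \eqref{0.1.1} say precisely that $s=3a-6=3(a-2)$ and $q=a^2-10$. The first step I would carry out is the identity
\[
\sum_{i=1}^6\bigl(2b_i-(a-2)\bigr)^2 = 4q-4(a-2)s+6(a-2)^2 = 4(a^2-10)-6(a-2)^2 = 2(a-4)(8-a),
\]
obtained by expanding the square and then substituting $s=3(a-2)$ and $q=a^2-10$. Since the left-hand side is a sum of integer squares, this forces $2(a-4)(8-a)\ge 0$, hence $4\le a\le 8$, leaving only five values of $a$ to consider.

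For each of these five values I would then read off the $b_i$. The number $2(a-4)(8-a)$ equals $0,6,8,6,0$ for $a=4,5,6,7,8$ respectively, and every summand $2b_i-(a-2)$ has the same parity, namely that of $a$. When $a=4$ or $a=8$ all six summands vanish, so $b_i=(a-2)/2$ for every $i$, giving $(4;1,1,1,1,1,1)$ and $(8;3,3,3,3,3,3)$. When $a=5$ or $a=7$ the six summands are odd with squares adding to $6$, so each square equals $1$ and $b_i\in\{(a-3)/2,(a-1)/2\}$; the remaining constraint $\sum b_i=3a-6$ then fixes how many $b_i$ take each of the two allowed values, producing $(5;2,2,2,1,1,1)$ and $(7;3,3,3,2,2,2)$ after reordering. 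When $a=6$ the six summands are even with squares adding to $8$; since the only even perfect squares not exceeding $8$ are $0$ and $4$, exactly two summands equal $\pm 2$ and the rest vanish, and $\sum(2b_i-4)=2s-24=0$ forces one $+2$ and one $-2$, i.e. one $b_i=3$, one $b_i=1$ and four $b_i=2$, giving $(6;3,2,2,2,2,1)$.

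Finally, a direct substitution confirms that each of the five listed vectors does satisfy \eqref{0.1.1}, so the enumeration is exactly as claimed. The argument is almost entirely mechanical; the only point needing a small separate remark is the case $a=6$, where one must note that $8$ has a unique representation as a sum of even perfect squares, so I do not anticipate any genuine obstacle.
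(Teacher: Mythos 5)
Your proposal is correct. I verified the key identity: with $c=a-2$, $\sum_{i=1}^6(2b_i-c)^2=4q-4cs+6c^2$, and substituting $s=3c$, $q=a^2-10$ gives $4(a^2-10)-6(a-2)^2=-2a^2+24a-64=2(a-4)(8-a)$, which indeed takes the values $0,6,8,6,0$ for $a=4,\dots,8$. The parity observation ($2b_i-(a-2)\equiv a \pmod 2$) and the subsequent counting arguments in each case are all sound, and they correctly rule out any stray negative values of $b_i$ since the allowed residues pin each $b_i$ to a two- or three-element set. This is a genuinely different and substantially shorter route than the paper's: the paper first applies the Cauchy--Schwarz inequality $(\sum b_i)^2\le 6\sum b_i^2$ to get $4\le a\le 8$ (which is the inequality form of your exact identity) and then, lacking the refined information your completed square provides, must grind through an extensive tree of subcases using the bounds $\sum(b_i^2-b_i)=a^2-3a-4$ and $b_i^2-b_i\ge 0$. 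Your version replaces all of that casework with a single representation-of-an-integer-as-a-sum-of-six-squares argument of fixed parity, which both shortens the proof considerably and makes it transparent why exactly these five solutions occur (they are the configurations minimizing or nearly minimizing the variance of the $b_i$ about $(a-2)/2$). The one place a referee might ask for a word more is the claim that for $a=6$ the only way to write $8$ as a sum of six even squares is $4+4+0+0+0+0$, but as you note this is immediate since the next even square is $16>8$.
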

\begin{proof} 
We first use the Cauchy-Scwartz's inequality $(\sum_{i=1}^6b_i)^2 \le 6(\sum_{i=1}^6b_i^2)$ to obtain $(a^2-12a+32) \le 0$ whence $4 \le a \le 8$. We further observe that 
\begin{equation}
\sum_{i=1}^6(b_i^2-b_i)=a^2-3a-4.
\end{equation}
Also, $(b_i^2-b_i) \ge 0$ for all $i \ge 1$, and $b_1>0$ as $3a-6>0$ for $a \ge 4$.

\smallskip

\noindent\underline{Case 1: $a=4$.} We have $\sum_{i=1}^6(b_i^2-b_i)=0$ whence $|b_i| \le 1$ for all $i$. Since $\sum_{i=1}^6b_i=6$, we have $b_i=1$ for all $i$.

\smallskip

\noindent\underline{Case 2: $a=5$.} We have $\sum_{i=1}^6(b_i^2-b_i)=6$ whence $|b_i| \le 3$. Also, $\sum_{i=1}^6b_i^2=15$ and $\sum_{i=1}^6b_i=9$.

\noindent Subcase 2.1) $b_1=3$. In this case $\sum_{i=2}^6(b_i^2-b_i)=0$ whence $|b_i| \le 1$ for all $i \ge 2$. Consequently $\sum_{i=1}^6b_i \le 8$ which is a contradiction.

\noindent Subcase 2.2) $b_1 \le 2$. In this case we must have $b_1=b_2=b_3=2$. Consequently $\sum_{i=4}^6(b_i^2-b_i)=0$ whence $|b_i| \le 1$ for all $i \ge 4$ whence the only solution is ${\bf z}=(5; 2,2,2,1,1,1)$.

\smallskip

\noindent\underline{Case 3: $a=6$.} We have $\sum_{i=1}^6(b_i^2-b_i)=14$ whence $|b_i| \le 4$. Also $\sum_{i=1}^6b_i^2=26$ and $\sum_{i=1}^6b_i=12$.

\noindent Subcase 3.1) $b_1=4$. Then $\sum_{i=2}^6(b_i^2-b_i)=2$ whence $|b_i| \le 2$ for $i \ge 2$. Consequently $b_2=b_3=b_4=2$. But then $\sum_{i=1}^6b_i^2 \ge 28$ which is a contradiction.

\noindent Subcase 3.2) $b_1=3$.

3.2.1) $b_2=3$. In this case $\sum_{i=3}^6(b_i^2-b_i)=2$ whence $|b_i| \le 2$ for $i \ge 3$. Consequently, $b_3=b_4=2$. Thus $b_5+b_6=2$ and $b_5^2+b_6^2$ which is a contradiction.

3.2.2) $b_2 \le 2$. In this case we have the only solution ${\bf z}=(6; 3,2,2,2,2,1)$.

\noindent Subcase 3.3) $b_1 \le 2$. In this case $b_i=2$ for all $i$ whence $\sum_{i=1}^6b_i^2=24$ which is a contradiction.

\smallskip

\noindent\underline{Case 4: $a=7$.} We have $\sum_{i=1}^6(b_i^2-b_i)=24$ whence $|b_i| \le 5$. Also, $\sum_{i=1}^6b_i^2=39$ and $\sum_{i=1}^6b_i=15$.

\noindent Subcase 4.1) $b_1=5$. Then $\sum_{i=2}^6(b_i^2-b_i)=4$ whence $|b_i| \le 2$ for all $i \ge 2$. Consequently $b_i=2$ for all $i$, thus $\sum_{i=1}^6b_i^2=45$ which is a contradiction.

\noindent Subcase 4.2) $b_1=4$.

4.2.1) $b_2=4$. Then $\sum_{i=3}^6(b_i^2-b_i)=0$ whence $|b_i| \le 1$ for all $i \ge 3$. Consequently $\sum_{i=1}^6b_i \le 12$ which is a contradiction.

4.2.2) $b_2=3$.

\hspace{5pt} 4.2.2.1) $b_3=3$. Then $\sum_{i=4}^6(b_i^2-b_i)=0$ whence $|b_i| \le 1$ for $i \ge 4$. Consequently $\sum_{i=1}^6b_i \le 13$ which is a contradiction.

\hspace{5pt} 4.2.2.2) $b_3 \le 2$. Then $b_i=2$ for all $i \ge 3$ whence $\sum_{i=1}^6b_i^2 =41$ which is a contradiction.

4.2.3) $b_2 \le 2$. Then $\sum_{i=1}^6b_i \le 14$ which is a contradiction.

\noindent Subcase 4.3) $b_1=3$. Then $b_2=b_3=3$.

4.3.1) $b_4=3$. Then $\sum_{i=5}^6(b_i^2-b_i)=0$ whence $|b_i| \le 1$ for $i \ge 5$. Consequently $\sum_{i=1}^6b_i \le 14$ which is a contradiction.

4.3.2) $b_4 \le 2$. Then $b_4=b_5=b_6=2$. We get only one solution ${\bf z}=(7; 3,3,3,2,2,2)$.

\noindent Subcase 4.4) $b_1 \le 2$. Then $\sum_{i=1}^6b_i \le 12$ which is a contradiction.

\smallskip

\noindent\underline{Case 5: $a=8$.} We have $\sum_{i=1}^6(b_i^2-b_i)=36$ whence $|b_i| \le 6$. Also, $\sum_{i=1}^6b_i^2=54$ and $\sum_{i=1}^6b_i=18$.

\noindent Subcase 5.1) $b_1=6$. Then $\sum_{i=2}^6(b_i^2-b_i)=6$ whence $|b_i| \le 3$ for $i \ge 2$.

5.1.1) $b_2=3$. In this case $\sum_{i=3}^6(b_i^2-b_i)=0$ whence $|b_i| \le 1$ for $i \ge 3$. Consequently $\sum_{i=1}^6b_i \le 13$ which is a contradiction.

5.1.2) $b_2 \le 2$. In this case $\sum_{i=1}^6b_i \le 16$ which is a contradiction.

\noindent Subcase 5.2) $b_1=5$. Then $\sum_{i=2}^6(b_i^2-b_i)=16$ whence $|b_i| \le 4$.

5.2.1) $b_2=4$. Then $\sum_{i=3}^6(b_i^2-b_i)=4$ whence $|b_i| \le 2$ for $i \ge 3$. Thus $\sum_{i=1}^6b_i \le 17$ which is a contradiction.

5.2.2) $b_2=3$ which implies $b_3=3$. Then $\sum_{i=4}^6(b_i^2-b_i)=4$ whence $|b_i| \le 2$ for $i \ge 4$. Consequently $\sum_{i=1}^6b_i \le 17$ which is a contradiction.

5.2.3) $b_2 \le 2$. Then $\sum_{i=1}^6b_i \le 15$ which is a contradiction.

\noindent Subcase 5.3) $b_1=4$. 

5.3.1) $b_2=4$.

\hspace{5pt} 5.3.1.1) $b_3=4$. Then $\sum_{i=4}^6(b_i^2-b_i)=0$ whence $|b_i| \le 1$ for $i \ge 4$. Consequently $\sum_{i=1}^6b_i \le 15$ which is a contradiction.

\hspace{5pt} 5.3.1.2) $b_3=3$ which implies $b_4=3$. Thus $\sum_{i=5}^6(b_i^2-b_i)=0$ whence $|b_i| \le 1$ for $i \ge 5$. Consequently $\sum_{i=1}^6b_i \le 16$ which is a contradiction.

\hspace{5pt} 5.3.1.3) $b_3 \le 2$. Then $\sum_{i=1}^6b_i \le 16$ which is a contradiction.

5.3.2) $b_2=3$. Then $b_3=b_4=b_5=3$ and $b_6=2$. Consequently $\sum_{i=1}^6b_i^2=56$ which is a contradiction.

5.3.3) $b_2 \le 2$. Then $\sum_{i=1}^6b_i \le 14$ which is a contradiction.

\noindent Subcase 5.4) $b_1 \le 3$. In this case we have the only solution ${\bf z}=(8;3,3,3,3,3,3)$.
\end{proof}


\begin{thebibliography}{CMRPL}

\bibitem[A]{a} F.~Ambro.
\textit{Ladders on Fano varieties}. 
Algebraic geometry, 9. J. Math. Sci. (New York)  \textbf{94} (1999), no.\ 1, 1126-1135.

\bibitem[Be1]{b1} A.~Beauville.
\textit{An introduction to Ulrich bundles}. 
Eur. J. Math. \textbf{4} (2018), no.~1, 26-36.

\bibitem[Be2]{b2} A.~Beauville.
\textit{Complex algebraic surfaces}. 
London Mathematical Society Lecture Note Series, \textbf{68}. Cambridge University Press, Cambridge, 1983. iv+132 pp. 

\bibitem[BC]{bc} I.~Bauer, F.~Catanese.
\textit{On rigid compact complex surfaces and manifolds}. 
Adv. Math. \textbf{333} (2018), 620-669. 

\bibitem[BMQ]{bmq} F.~Bogomolov, M.~McQuillan.
\textit{Rational curves on foliated varieties}. 
Foliation theory in algebraic geometry, 21-51, Simons Symp., Springer, Cham, 2016. 

\bibitem[BMPT]{bmpt} V.~Benedetti, P.~Montero, Y.~Prieto Monta\~{n}ez, S.~Troncoso.
\textit{Projective manifolds whose tangent bundle is Ulrich}. 
J. Algebra \textbf{630} (2023), 248-273.

\bibitem[Bo]{bo} F.~A.~Bogomolov.
\textit{Unstable vector bundles and curves on surfaces}. 
Proceedings of the International Congress of Mathematicians (Helsinki, 1978), pp. 517-524, Acad. Sci. Fennica, Helsinki, 1980. 

\bibitem[BS]{bs} M.~C.~Beltrametti, A.~J.~Sommese.
\textit{The adjunction theory of complex projective varieties}. 
De Gruyter Expositions in Mathematics, \textbf{16}. Walter de Gruyter \& Co., Berlin, 1995.

\bibitem[C1]{c} G.~Casnati.
\textit{Special Ulrich bundles on non-special surfaces with $p_g=q=0$}. 
Internat. J. Math. \textbf{28} (2017), no. 8, 1750061, 18 pp.

\bibitem[C2]{c2} G.~Casnati.
\textit{Tangent, cotangent, normal and conormal bundles are almost never instanton bundles}.
Preprint 2023, arXiv:2303.04064.

\bibitem[CH]{ch} M.~Casanellas, R.~Hartshorne.
\textit{Stable Ulrich bundles}. With an appendix by F.~Geiss, F.-O.~Schreyer.
Internat. J. Math. \textbf{23} (2012), no. 8, 1250083, 50 pp.

\bibitem[CMRPL]{cmp} L.~Costa, R.~M.~Mir\'o-Roig, J.~Pons-Llopis.
\textit{Ulrich bundles}.
De Gruyter Studies in Mathematics, \textbf{77}, De Gruyter 2021. 

\bibitem[CP]{cp} F.~Campana, M.~P$\breve{\rm a}$un.
\textit{Foliations with positive slopes and birational stability of orbifold cotangent bundles}. 
Publ. Math. Inst. Hautes \'Etudes Sci. \textbf{129} (2019), 1-49. 

\bibitem[DR]{d} S.~Di Rocco.
\textit{$k$-very ample line bundles on del Pezzo surfaces}. 
Math. Nachr. \textbf{179} (1996), 47-56.

\bibitem[ES]{es} D.~Eisenbud, F.-O.~Schreyer.
\textit{Resultants and Chow forms via exterior syzygies}. 
J. Amer. Math. Soc. \textbf{16} (2003), no.~3, 537-579.

\bibitem[F1]{f1}
T. Fujita.
\textit{On del Pezzo fibrations over curves}. 
Osaka J. Math. \textbf{27} (1990), no.~2, 229-245.

\bibitem[F2]{f2}
T. Fujita.
\textit{On Kodaira energy and adjoint reduction of polarized manifolds}. 
Manuscripta Math. \textbf{76} (1992), no. 1, 59-84.

\bibitem[FL1]{fl1} M.~Fulger, B.~Lehmann. 
\textit{Morphisms and faces of pseudo-effective cones}. 
Proc. Lond. Math. Soc. \textbf{112} (2016), no. 4, 651-676.

\bibitem[FL2]{fl2} M.~Fulger, B.~Lehmann. 
\textit{Positive cones of dual cycle classes}. 
Algebr. Geom. \textbf{4} (2017), no. 1, 1-28.

\bibitem[GL]{gl}
L.~Ghidelli, J.~Lacini.
\textit{Logarithmic bounds on Fujita's conjecture}. 
Preprint 2021, arXiv:2107.11705.

\bibitem[Ha1]{ha1} R.~Hartshorne. 
\textit{Algebraic geometry}. 
Graduate Texts in Mathematics \textbf{52}. Springer-Verlag, New York-Heidelberg, 1977.

\bibitem[Ha2]{ha2} R.~Hartshorne. 
\textit{On the classification of algebraic space curves}.
Vector bundles and differential equations (Proc. Conf., Nice, 1979), pp. 83-112, Progr. Math.,  \textbf{ 7}, Birkhäuser, Boston, Mass., 1980

\bibitem[Ho]{ho} A.~H\"oring.
\textit{On a conjecture of Beltrametti and Sommese}.
J. Algebraic Geom.  \textbf{21} (2012), no.\ 4, 721-751.

\bibitem[I]{i} P.~Ionescu.
\textit{Generalized adjunction and applications}.
Math. Proc. Cambridge Philos. Soc. \textbf{99} (1986), no.\ 3, 457-472. 

\bibitem[IP]{ip}
V. A. Iskovskikh, Yu. Prokhorov.
\textit{Fano varieties}. 
In: Algebraic geometry, V, 1-247, Encyclopaedia Math. Sci. \textbf{47}, Springer-Verlag, Berlin, 1999.

\bibitem[K]{k} Y.~Kawamata.
\textit{On effective non-vanishing and base-point-freeness}.
Kodaira's issue. Asian J. Math. \textbf{4} (2000), no.\ 1, 173-181.

\bibitem[Lan]{la} A.~Lanteri. 
\textit{Hilbert curves of quadric fibrations}.
Internat. J. Math. \textbf{29} (2018), no.\ 10, 1850067, 20 pp. 

\bibitem[Lop]{lo} A.~F.~Lopez. 
\textit{On varieties with Ulrich twisted normal bundles}.
Preprint 2022, arXiv:2205:06602. To appear on {\it Perspectives on four decades: Algebraic Geometry 1980-2020. In memory of Alberto Collino. Trends in Mathematics, Birkh\"auser}.

\bibitem[Laz]{laz1} R.~Lazarsfeld. \textit{Positivity in algebraic geometry, I}.
Ergebnisse der Mathematik und ihrer Grenzgebiete, 3. Folge  \textbf{48}, Springer-Verlag, Berlin, 2004.

\bibitem[LS]{ls} A.~F.~Lopez, J.~C.~Sierra. 
\textit{A geometrical view of Ulrich vector bundles}.
Int. Math. Res. Not. IMRN(2023), no.\ 11, 9754-9776.

\bibitem[M]{m} Y.~Miyaoka. 
\textit{The Chern classes and Kodaira dimension of a minimal variety}.
In: Algebraic geometry, Sendai, 1985, 449-476, Adv. Stud. Pure Math., \textbf{10}, North-Holland, Amsterdam, 1987.

\bibitem[MS]{ms} S.~Mori, H.~Sumihiro.
\textit{On Hartshorne's conjecture}. 
J. Math. Kyoto Univ. \textbf{18} (1978), no. 3, 523-533.

\bibitem[T]{t} B.~Totaro.
\textit{Bott vanishing for algebraic surfaces}. 
Trans. Amer. Math. Soc. \textbf{373} (2020), no. 5, 3609-3626. 

\bibitem[W]{w2} J.~M.~Wahl.
\textit{A cohomological characterization of $\P^n$}. 
Invent. Math. \textbf{72} (1983), no. 2, 315-322.

\end{thebibliography}
\end{document}